\title{Non-congruence of homology Veech groups in genus two}
\author{Christian Weiß\thanks{The author is partially supported by the ERC-StG 257137.}}
\date{\today}
\newtheorem{thm}{Theorem}[section]
\newtheorem*{thm3}{Theorem~\ref{thm:main_thm}}
\newtheorem*{thm4}{Theorem~\ref{thm:weierstrass}}
\newtheorem{rem}[thm]{Remark}
\newtheorem{lem}[thm]{Lemma}
\newtheorem{prop}[thm]{Proposition}
\newtheorem{cor}[thm]{Corollary}
\newtheorem{exa}[thm]{Example}
\newcommand{\SL}{{\rm{SL}}}
\newcommand{\Id}{{\rm{Id}}}
\newcommand{\PSL}{{\rm{PSL}}}
\newcommand{\GL}{{\rm{GL}}}
\newcommand{\Jac}{{\rm{Jac}}}
\newcommand{\diag}{{\rm{diag}}}
\newcommand{\tr}{{\rm{tr}}}
\newcommand{\M}{{\rm{M}}}
\newcommand{\N}{{\mathcal{N}}}
\newcommand{\Aff}{{\rm{Aff}}}
\newcommand{\HH}{{\mathbb{H}}}
\newcommand{\CC}{{\mathbb{C}}}
\newcommand{\RR}{{\mathbb{R}}}
\newcommand{\ZZ}{{\mathbb{Z}}}
\newcommand{\NN}{{\mathbb{N}}}
\newcommand{\QQ}{{\mathbb{Q}}}
\newcommand{\Mg}{{\mathcal{M}_g}}
\newcommand{\Mtwo}{{\mathcal{M}_2}}
\newcommand{\OD}{\mathcal{O}_D}
\newenvironment{bsmallmatrix}
{\left[\begin{smallmatrix}}
	{\end{smallmatrix}\right]}
\newenvironment{psmallmatrix}
{\left(\begin{smallmatrix}}
	{\end{smallmatrix}\right)}
\begin{document} 

\maketitle

\begin{abstract}
We study the action of the Veech group of square-tiled surfaces of genus two on homology. 
This action defines the homology Veech group which is a subgroup of $\SL_2(\OD)$ where $\OD$ is a quadratic order
of square discriminant. Extending a result of Weitze-Schmithüsen we show that also the homology Veech group is 
a totally non-congruence subgroup with exceptions stemming only from the prime ideals lying above~$2$. While
Weitze-Schmithüsen's result for Veech groups is asymmetric with respect to the spin structure our use of the homology Veech group yields 
a completely symmetric picture.
\end{abstract}

\tableofcontents

\section{Introduction} \label{cha:Introduction}
Veech groups of square-tiled surfaces are an interesting class of subgroups of $\SL_2(\ZZ)$. They consist of the derivatives of affine diffeomorphisms of a finite number of glued copies of the unit square (see e.g. \cite{WS05}). Their study goes back to the fundamental paper \cite{Vee89}, where Veech groups have been introduced first. Ellenberg and McReynolds showed in \cite{ER12} that with some minor restrictions all subgroups of $\SL_2(\ZZ)$ appear as a Veech group if the genus
of the square-tiled surface is allowed to be large. However, for genus two square-tiled surfaces Weitze-Schmithüsen
proved in \cite{WS12} that these Veech groups are very far away from being congruence subgroups. In fact, the Veech group
of a genus two square-tiled surface has two representations in $\SL_2(\ZZ)$ given by its action on homology. These two
representations yield a pair of matrices $(A_1,A_2) \in \SL_2(\OD)$ where $\OD$ is a quadratic order of square discriminant
(see Section~\ref{cha:special_linear_group}). The image of this representation is called homology Veech group. In this paper, we generalize 
Weitze-Schmithüsen's result by showing that also the homology Veech group is very far away from being a congruence 
subgroup.\\[12pt]
For our result Weitze-Schmithüsen's approach of using different prototypes in the sense of \cite{Bai07} or \cite{McM05} at
the same time seems to be of limited use. 
Moreover her idea of dealing with the Wohlfahrt level may not be easily carried over to subgroups of $\SL_2(\OD)$ since
it is not evident how to generalize the Wohlfahrt level to this case. Indeed our additional ingredients
are therefore that we explicitly find coset representatives like in \cite{Wei12} and use Nori's Theorem, which states that 
the number of elements of  a subgroup of $\SL_2(\mathbb{F}_p)$ generated by parabolic elements is limited to only three 
possibilities (Corollary~\ref{cor:Nori}).\\[12pt] 
While Weitze-Schmithüsen takes into account principal congruence subgroups we look at Hecke congruence subgroups instead. Nori's Theorem \cite[Theorem~5.1]{Nor87} implies that the assertion for principal congruence subgroups follows from the case of Hecke congruence subgroups at least for all prime ideals. 
So on the level of Veech groups the two statements are almost equivalent and considering Hecke congruence subgroups is not a great restriction. Furthermore Weitze-Schmithüsen's theorem is asymmetric with respect to the spin structure of the square-tiled surfaces but our approach of using the homology Veech group symmetrizes the result.\\[12pt]
Let us now make more precise what we mean by ``being very far away from being a congruence subgroup``: 
let $\mathcal{O}$ be either $\ZZ$ or more generally a quadratic order $\OD$. Recall that an ideal is called \textbf{regular} if it contains a non-zero divisor. A finite index subgroup of $\SL_2(\mathcal{O})$ is called a \textbf{congruence subgroup}
if, for some regular ideal $\mathfrak{a}$ it contains a principal congruence subgroup 	$$\Gamma(\mathfrak{a}) := \left\{ \begin{pmatrix} a & b \\ c & d \end{pmatrix} \in \SL_2(\OD): \begin{pmatrix} a & b \\ c & d \end{pmatrix} 
\equiv \begin{pmatrix} 1 & 0 \\ 0 & 1 \end{pmatrix} \mod \mathfrak{a} \right\}.$$
(see Section~\ref{cha:special_linear_group} for details). Observe that $\Gamma$ is a congruence subgroup of $\SL_2(\mathcal{O})$ if and only if there exists a regular ideal $\mathfrak{a} \subset \mathcal{O}$ such 
that the \textbf{level index} $[\SL_2(\mathcal{O}/\mathfrak{a}) : \rho_{\mathfrak{a}}(\Gamma)]$ equals the index 
$[\SL_2(\mathcal{O}):\Gamma]$ where $\rho_{\mathfrak{a}}: \SL_2(\mathcal{O}) \to \SL_2(\mathcal{O}/\mathfrak{a})$ is the natural
projection. The group $\Gamma$ is thus called a \textbf{non-congruence subgroup of level $\mathfrak{a}$} if the two indices differ 
and $\Gamma$ is called a \textbf{totally non-congruence subgroup of level $\mathfrak{a}$} if 
$[\SL_2(\mathcal{O}/\mathfrak{a}) : \rho_{\mathfrak{a}}(\Gamma)]=1$. Being a totally non-congruence subgroup of level
$\mathfrak{a}$ is equivalent to the index $[\Gamma:\Gamma \cap \Gamma(\mathfrak{a})]$ being equal to the index 
$[\SL_2(\mathcal{O}):\Gamma(\mathfrak{a})].$\\[12pt]
Let $\mathcal{M}_g$ denote 	the moduli space of Riemann surfaces of genus $g$. A \textbf{translation surface} $(X,\omega) \in \Omega \mathcal{M}_g$ is a pair consisting of some $X \in \mathcal{M}_g$ equipped with a non-zero holomoprhic $1$-form $\omega \in \Omega(X)$ on $X$. The vector-bundle $\Omega \mathcal{M}_g$ is stratified by decomposing it into equivalence classes of pairs $(X,\omega) \in \Omega \mathcal{M}_g (k_1,\ldots,k_n)$ with $\sum k_i = 2g-2$, where $\omega$ has exactly $n$ zeroes with multiplicities $k_1,\ldots,k_n$. The group $\SL_2(\RR)$ naturally acts on $\Omega\mathcal{M}_g (k_1,\ldots,k_n)$: for $A = \begin{psmallmatrix} a & b \\ c & d \end{psmallmatrix} \in \SL_2(\RR)$ consider the holomorphic one-form
$$\omega' = \begin{pmatrix} 1 & i \end{pmatrix} \begin{pmatrix} a & b \\ c & d \end{pmatrix} \begin{pmatrix} \text{Re}(\omega) \\ \text{Im}(\omega) \end{pmatrix}$$
on $X$. There is a unique complex structure with respect to which $\omega'$ is holomoprhic. Its charts yield a new Riemann surface $X'$ and $A \cdot (X, \omega) := (X',\omega').$ Square-tiled surfaces build an important class of examples of translation surfaces: the surface $X$ is obtained as a covering of the unit torus ramified over one point only and $\omega$ is the pullback of the holomorphic $1$-form of the torus. \\[12pt]
Weitze-Schmith\"usen proved in \cite{WS12} the following theorem on Veech groups of square-tiled surfaces in $\Omega\Mtwo(2)$. Recall that the orbits of square-tiled surfaces in $\Omega\Mtwo(2)$ under the action of $\SL_2(\RR)$ can be classified into two sets $\mathcal{A}_d$ and $\mathcal{B}_d$ depending on their spin structure (even or odd) which is detemined by the number of integral Weierstra\ss \ points, see Theorem~\ref{cor:classification}.
 
\begin{thm} \textbf{(Weitze-Schmithüsen, \cite[Theorem~3]{WS12})}
  Let $L_d$ be a square-tiled surface in $\Omega\Mtwo(2)$ with $d$ squares and let $\SL(L_d)$ be its Veech group. We distinguish the two different
  cases that $L_d$ is in the orbit $\mathcal{A}_d$ and $\mathcal{B}_d$ in the classification of square-tiled surfaces in $\Omega \Mtwo(2)$.
  \begin{itemize}
   \item[(1A)] If $d$ is even or $d$ is odd and $L_d$ is in $\mathcal{A}_d$, or $d=3$, then we have
   $[\SL(L_d):\SL(L_d) \cap \Gamma(n)] = [\SL_2(\ZZ):\Gamma(n)]$ for all odd $n \in \NN$.
   \item[(1B)] If $d$ is even or $d$ is odd and $L_d$ is in $\mathcal{A}_d$, or $d=3$, then we have
   $[\SL(L_d):\SL(L_d) \cap \Gamma(n)] = \tfrac{1}{3}[\SL_2(\ZZ):\Gamma(n)]$ for all even $n \in \NN$.
   \item[(2)] If $d$ is odd, $d \geq 5$ and $L_d$ is in $\mathcal{B}_d$ then $\SL(L_d)$ is a totally non-congruence subgroup,
   i.e.  $[\SL(L_d):\SL(L_d) \cap \Gamma(n)] = [\SL_2(\ZZ):\Gamma(n)]$ for all $n \in \NN$.
  \end{itemize}
\end{thm}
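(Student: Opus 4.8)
The plan is to produce explicit parabolic generators of $\SL(L_d)$ and then study the reduction maps $\rho_n\colon\SL(L_d)\to\SL_2(\ZZ/n)$ one prime power at a time, using Nori's Theorem to identify the image modulo each prime and a lifting argument to pass to prime powers. By the classification of square-tiled surfaces in $\Omega\Mtwo(2)$ (Theorem~\ref{cor:classification}), every $L_d$ is affinely equivalent to an $L$-shaped prototype; its horizontal and vertical foliations decompose $L_d$ into two cylinders each, and the associated products of Dehn twists realise two parabolic elements of $\SL(L_d)$ of the shape $\left(\begin{smallmatrix}1&m_h\\0&1\end{smallmatrix}\right)$ and $\left(\begin{smallmatrix}1&0\\-m_v&1\end{smallmatrix}\right)$, with $m_h,m_v$ read off from the cylinder heights. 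Acting by $g\in\SL_2(\ZZ)$ moves $L_d$ within its $\SL_2(\ZZ)$-orbit, and conjugating the horizontal multitwist of $gL_d$ back by $g^{-1}$ contributes further parabolic elements of $\SL(L_d)$; letting the prototype range over all of $\mathcal{A}_d$, resp.\ $\mathcal{B}_d$, one assembles a whole reservoir of parabolics whose twist parameters run over a controlled set of integers. One also checks --- this is standard for these surfaces, the hyperelliptic involution supplying $-\Id$ --- that these parabolics together with $-\Id$ already generate $\SL(L_d)$.

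Granting this, the asserted equality $[\SL(L_d):\SL(L_d)\cap\Gamma(n)]=[\SL_2(\ZZ):\Gamma(n)]$ is precisely surjectivity of $\rho_n$, which by the Chinese Remainder Theorem reduces to $n=p^k$. Fix a prime $p$. Modulo $p$ the group $\rho_p(\SL(L_d))$ is generated by unipotents, so by Corollary~\ref{cor:Nori} it is either all of $\SL_2(\mathbb{F}_p)$ or is conjugate into a Borel subgroup; the latter is impossible once two of the parabolic generators above reduce modulo $p$ to non-commuting unipotents, i.e.\ once there is a prototype in the given orbit with $p\nmid m_h$ and another one (after conjugation) contributing a lower-triangular unipotent with parameter prime to $p$. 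Hence $\rho_p(\SL(L_d))=\SL_2(\mathbb{F}_p)$ for every odd $p$. A standard lifting argument then promotes this to surjectivity of $\rho_{p^k}$: the successive layers of the kernel of $\SL_2(\ZZ/p^k)\to\SL_2(\mathbb{F}_p)$ carry the adjoint action, which is irreducible for odd $p$, and $\SL(L_d)$ already contains the elements $\left(\begin{smallmatrix}1&p^{j}m_h\\0&1\end{smallmatrix}\right)$ (with $p\nmid m_h$ for a suitable prototype) hitting those layers, so each layer is swallowed. Feeding this back through the Chinese Remainder Theorem yields surjectivity of $\rho_n$ for all odd $n$, which is statement (1A) together with the odd-level half of (2).

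The genuinely different prime is $p=2$, where $\SL_2(\mathbb{F}_2)\cong S_3$ is not perfect and the adjoint module is reducible, so that a unipotent-generated subgroup can be a cyclic group of order $2$ --- of index $3$. This is exactly where the spin invariant separating $\mathcal{A}_d$ from $\mathcal{B}_d$ (and the collapse of the classification when $d$ is even or $d=3$) becomes visible. For prototypes in $\mathcal{A}_d$ a parity computation with the prototype data shows that $\rho_2(\SL(L_d))$ has order exactly $2$, and a lifting argument of the same flavour --- using that the squares of the parabolic generators generate the kernel of $\SL_2(\ZZ/2^k)\to S_3$ --- shows the index stays equal to $3$ for every $k$; combined with the odd-level statement and the Chinese Remainder Theorem this is (1B). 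For prototypes in $\mathcal{B}_d$ with $d$ odd and $d\ge 5$ the same parity computation instead exhibits two parabolic generators reducing modulo $2$ to two \emph{distinct} transpositions of $S_3$, so $\rho_2(\SL(L_d))=S_3=\SL_2(\mathbb{F}_2)$, and the same lifting gives surjectivity of $\rho_{2^k}$; together with the odd primes this is the full statement (2).

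The step I expect to be the real obstacle is the arithmetic one hidden in the last two paragraphs: controlling, uniformly in the prime $p$ and in the number of squares $d$, which residues the twist parameters of the prototypes in a fixed $\SL_2(\ZZ)$-orbit actually realise, and --- for $p=2$ --- carrying out the parity bookkeeping that makes the dichotomy $\mathcal{A}_d$ versus $\mathcal{B}_d$ appear, with the exceptional factor $\tfrac13$ in (1B) being exactly its shadow. By contrast the reduction to prime powers and the layer-by-layer lifting lemmas are routine. (An alternative closer to Weitze-Schmith\"usen's original route in \cite{WS12} would replace the appeal to Nori by computing the Wohlfahrt level of $\SL(L_d)$ from its cusp widths and testing the congruence property only at that level; but the quantitative statements, in particular the factor $\tfrac13$, still rest on the same prototype arithmetic.)
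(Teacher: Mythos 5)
This statement is not proved in the paper at all: it is quoted verbatim from Weitze-Schmith\"usen \cite[Theorem~3]{WS12}, so there is no internal proof to compare against. Measured against what is actually needed, your sketch gets the surjectivity half right in outline but has a genuine gap in the half that produces the factor $\tfrac13$.

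For (1A) and the odd-level part of (2), your strategy (explicit multitwist parabolics from cylinder decompositions, Nori's dichotomy modulo $p$, layer-by-layer lifting to $p^k$, CRT/Goursat) is sound and is in fact the strategy this paper itself uses for the homology Veech group; it differs from Weitze-Schmith\"usen's original route via prototypes and the Wohlfahrt level. But it is only a lower bound machine, and the arithmetic input --- exhibiting, for \emph{every} odd $p$ and every orbit, two cylinder directions whose twist parameters are prime to $p$ and whose reductions do not commute --- is exactly what you defer. For $L(m,n)$ with $d=m+n-1$ one has $T'^n$ and $Z'^m$, and $p$ can divide $m$ or $n$; the paper's own computations (the elements $E$, $F$ and the choice $m=d-1,n=2$ versus $m=d-2,n=3$) show this case analysis is where the actual work lives, so the proof is incomplete there even if the plan is right.

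The genuine gap is in (1B). To show $[\SL(L_d):\SL(L_d)\cap\Gamma(2^k)]=\tfrac13[\SL_2(\ZZ):\Gamma(2^k)]$ you need an \emph{upper} bound on $\rho_2(\SL(L_d))$, i.e.\ a statement about every element of the Veech group, not just about a reservoir of parabolics. You derive this from the claim that your parabolics together with $-\Id$ generate $\SL(L_d)$, calling it standard; it is not, and nothing in your construction guarantees you have found a complete generating set (these Veech groups contain elliptic elements, cf.\ $S'$ for $L(2,2)$, and are computed in practice by coset enumeration, not by listing multitwists). Without completeness, a ``parity computation on the generators'' cannot bound the image from above. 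The correct mechanism --- the one in \cite{WS12}, and the one this paper reuses when it invokes the proof of \cite[Theorem~3~(ii)]{WS12} --- is that the whole affine group permutes the six Weierstra\ss\ points compatibly with their projection to the $2$-torsion of $E_1$, and preserves the set of integral ones; when the Weierstra\ss\ points are distributed asymmetrically over $E_1[2]$ (the even/$\mathcal{A}_d$ cases), the image in $\SL_2(\mathbb{F}_2)\cong S_3$ must fix a point of $P^1(\mathbb{F}_2)$ and hence has index at least $3$, \emph{a priori} and for the full group. That invariance argument, not generator bookkeeping, is what makes the $\tfrac13$ (and its absence in case (2)) appear; your proposal as written does not contain it.
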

Note that the theorem is asymmetric with respect to the spin structure: while there is need for distinction between odd and even $n$ in $\mathcal{A}_d$, this is not necessary in $\mathcal{B}_d$. If one introduces the Hecke congruence subgroups $\Gamma_0(n)$ where only the lower left entry of the matrices has to be equal to $0 \mod n$, the theorem implies:

\begin{cor} \label{cor:schmitt}
  Let $L_d$ be a square-tiled surface in $\Omega\Mtwo(2)$ with $d$ squares and let $\SL(L_d)$ be its Veech group. 
  \begin{itemize}
   \item[(1A)] If $d$ is even, or $d$ is odd and $L_d$ is in $\mathcal{A}_d$, or $d=3$, then we have
   $[\SL(L_d):\SL(L_d) \cap \Gamma_0(n)] = [\SL_2(\ZZ):\Gamma_0(n)]$ for all odd $n \in \NN$.
   \item[(1B)] If $d$ is even, or $d$ is odd and $L_d$ is in $\mathcal{A}_d$, or $d=3$, then we have
   $[\SL(L_d):\SL(L_d) \cap \Gamma_0(n)] = c [\SL_2(\ZZ):\Gamma_0(n)]$ for all even $n \in \NN$ with $c \in \left\{ \tfrac{1}{3}, \tfrac{3}{3} \right\}$.
   \item[(2)] If $d$ is odd, $d \geq 5$ and $L_d$ is in $\mathcal{B}_d$ then $[\SL(L_d):\SL(L_d) \cap \Gamma_0(n)] = [\SL_2(\ZZ):\Gamma_0(n)]$ for all $n \in \NN$.
  \end{itemize}
\end{cor}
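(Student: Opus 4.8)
The plan is to deduce Corollary~\ref{cor:schmitt} from the already-quoted theorem of Weitze-Schmith\"usen purely by group-theoretic bookkeeping, exploiting the fact that $\Gamma(n) \subset \Gamma_0(n) \subset \SL_2(\ZZ)$ and the multiplicativity of indices in a tower of subgroups. First I would fix the notation $\Gamma := \SL(L_d)$ and, for a given $n$, set $\Gamma(n)' := \Gamma \cap \Gamma(n)$ and $\Gamma_0(n)' := \Gamma \cap \Gamma_0(n)$. The key identity is that, since $\Gamma(n) \subseteq \Gamma_0(n)$, we have $\Gamma(n)' \subseteq \Gamma_0(n)'$, and hence
\begin{equation*}
 [\Gamma : \Gamma(n)'] = [\Gamma : \Gamma_0(n)'] \cdot [\Gamma_0(n)' : \Gamma(n)'].
\end{equation*}
Correspondingly on the ambient side $[\SL_2(\ZZ) : \Gamma(n)] = [\SL_2(\ZZ) : \Gamma_0(n)] \cdot [\Gamma_0(n) : \Gamma(n)]$. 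The second observation is that the ``inner'' index is in both cases the same: the natural map $\Gamma_0(n)'/\Gamma(n)' \hookrightarrow \Gamma_0(n)/\Gamma(n)$ is injective, and it is surjective precisely when $\rho_n(\Gamma) \supseteq \rho_n(\Gamma_0(n)) = \Gamma_0(n)/\Gamma(n)$, i.e.\ exactly under the totally-non-congruence (or ``large image'') hypothesis of cases (1A) and (2). So I would argue that in cases (1A) and (2), $[\Gamma_0(n)' : \Gamma(n)'] = [\Gamma_0(n) : \Gamma(n)]$, and then dividing the two displayed equations gives $[\Gamma : \Gamma_0(n)'] = [\SL_2(\ZZ) : \Gamma_0(n)]$, which is exactly the claim in (1A) and (2).

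For case (1B) I would run the same tower argument but now the theorem tells us $[\Gamma : \Gamma(n)'] = \tfrac13 [\SL_2(\ZZ):\Gamma(n)]$ for even $n$, i.e.\ the image $\rho_n(\Gamma) \subseteq \SL_2(\ZZ/n)$ has index exactly $3$. I would then need the complementary fact that this index-$3$ overgroup nonetheless still \emph{surjects} onto $\SL_2(\ZZ)/\Gamma(n)$ modulo $\Gamma_0(n)$ with an image of index $3$ in the coset space $\SL_2(\ZZ/n)/\Gamma_0(n)$ — concretely, that $\rho_n(\Gamma) \cdot \Gamma_0(n) $ has index $3$ in $\SL_2(\ZZ/n)$ when $n$ is even, which is where one invokes the precise description of the exceptional subgroup at $2$ coming from Weitze-Schmith\"usen's proof (it is the preimage of the index-$3$ subgroup of $\SL_2(\ZZ/2)$, i.e.\ of a group of order $2$ in $\SL_2(\mathbb{F}_2) \cong S_3$; that order-$2$ subgroup maps onto a coset space of size $3$). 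With $[\rho_n(\Gamma)\Gamma_0(n) : \Gamma_0(n)] = [\rho_n(\Gamma) : \rho_n(\Gamma) \cap \Gamma_0(n)]$, one gets $[\Gamma_0(n)' : \Gamma(n)'] = \tfrac13 \cdot \tfrac{[\SL_2(\ZZ):\Gamma(n)] / [\SL_2(\ZZ):\Gamma_0(n)]}{1}$ rearranged to yield the factor $\tfrac23$; more precisely I would just track that $[\SL_2(\ZZ):\Gamma_0(2)] = 3$ while $[\SL_2(\ZZ):\Gamma(2)] = 6$, so the ``loss of $\tfrac13$'' at level $2$ translates into a loss of $\tfrac{2}{3}$ once we pass to $\Gamma_0$, and it persists unchanged to all even $n$ because the obstruction lives entirely at the prime $2$.

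The bookkeeping-free way to say all of this, which is probably cleanest to write, is: let $\mathfrak p = 2\ZZ$. By the quoted theorem the image $\rho_n(\Gamma)$ in $\SL_2(\ZZ/n)$ equals all of $\SL_2(\ZZ/n)$ when $n$ is odd or $L_d \in \mathcal A_d$ or $d=3$ or $d\ge 5$ odd with $L_d\in\mathcal B_d$, and equals the index-$3$ subgroup $H_n := \ker(\SL_2(\ZZ/n) \to \SL_2(\mathbb F_2) \to S_3/A_3)$ otherwise. Then $[\Gamma : \Gamma\cap\Gamma_0(n)] = [\rho_n(\Gamma) : \rho_n(\Gamma)\cap \Gamma_0(n)] = [\rho_n(\Gamma) \cdot \Gamma_0(n) : \Gamma_0(n)]$, and one computes this index to be $[\SL_2(\ZZ):\Gamma_0(n)]$ in the surjective case and $\tfrac23[\SL_2(\ZZ):\Gamma_0(n)]$ in the exceptional case by a direct check in $\SL_2(\mathbb F_2)$ (noting $A_3 \not\subseteq$ the image of the standard Borel, so $H_n\Gamma_0(n)$ is a proper, index-$3$, overgroup of $\Gamma_0(n)$ precisely when $2\mid n$).

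The main obstacle I anticipate is purely the case (1B) computation at the prime $2$: one has to be careful that $\Gamma_0(n)$ itself is \emph{not} contained in the exceptional index-$3$ group $H_n$ — if it were, the Hecke index would not drop at all and the factor would be $1$, not $\tfrac23$ — and conversely that $H_n$ does not contain $\Gamma_0(n)$, so that the product $H_n\Gamma_0(n)$ is a genuine proper overgroup. This is a finite check inside $\SL_2(\ZZ/2)\cong S_3$, comparing the image of the standard Borel (a subgroup of order $2$) with the exceptional order-$2$ subgroup coming from the spin obstruction; since two distinct order-$2$ subgroups of $S_3$ generate all of $S_3$, one gets $H_n\Gamma_0(n) = \rho_2^{-1}(\langle\,\text{both}\,\rangle)$ has the right index, and everything else is the multiplicativity of indices in a tower. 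I would present the odd-level and the $\mathcal A_d$/small cases first since there the inner index is literally unchanged, and then isolate the even-level spin-obstruction case as a short separate paragraph with the explicit index count $6 = 2\cdot 3$ at level $2$.
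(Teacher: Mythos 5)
Your overall strategy is the right one, and it is presumably what the paper intends by ``the theorem implies'': reduce $[\SL(L_d):\SL(L_d)\cap\Gamma_0(n)]$ to $[\rho_n(\Gamma):\rho_n(\Gamma)\cap\rho_n(\Gamma_0(n))]$ via the correspondence theorem, observe that the obstruction is concentrated at the prime $2$, and finish by a computation in $\SL_2(\mathbb{F}_2)\cong S_3$. Cases (1A) and (2) are complete as you present them: full image mod $n$ immediately gives the full Hecke index. The factorization of the even-level case through the mod-$2$ image is also sound (an index-$3$ subgroup of $\SL_2(\ZZ/2^a)$ necessarily contains the kernel of reduction mod $2$ since that kernel is a $2$-group, and the odd part is handled by (1A)), and your count $|H_2\bar B|/|\bar B|=4/2=2=\tfrac23\cdot 3$ is correct \emph{provided} $H_2\neq\bar B$.

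That proviso is where the genuine gap sits, and it is larger than you acknowledge. First, a small internal inconsistency: you define the exceptional subgroup as $\ker(\SL_2(\ZZ/n)\to\SL_2(\mathbb{F}_2)\to S_3/A_3)$, which is the preimage of $A_3$ and has index $2$, not $3$; what you need is one of the three \emph{order-two} subgroups of $\SL_2(\mathbb{F}_2)$, as you indeed use later. Second, and more seriously, the statement of Weitze-Schmith\"usen's theorem only tells you that $\rho_2(\Gamma)$ has order $2$; it does not tell you \emph{which} of the three order-two subgroups it is, and the answer $\tfrac23$ versus $\tfrac13$ hinges entirely on whether $\rho_2(\Gamma)$ equals the Borel $\bar B=\{\Id,T'\}$ or not. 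This cannot be deduced formally: the principal congruence index $[\Gamma:\Gamma\cap\Gamma(n)]$ is invariant under conjugating $\Gamma$ inside $\SL_2(\ZZ)$, but the Hecke index is not, and the two representatives $L(m,n)$ and $L(n,m)$ of the same orbit (related by conjugation by $S'$) have mod-$2$ images that are conjugate but generally distinct order-two subgroups — for one of them the image contains $\rho_2(T'^{\,\text{odd}})=T'$ and hence \emph{is} the Borel, in which case the Hecke index at level $2$ is $1$, not $2$. So the required input must be verified for the specific surface at hand, e.g.\ by exhibiting an element of $\SL(L_d)$ with odd lower-left entry (such as $Z'^m$ with $m$ odd, or the matrix $E'$ when $d$ is odd); an appeal to ``the precise description of the exceptional subgroup coming from Weitze-Schmith\"usen's proof'' without carrying out this check leaves the proof of (1B) incomplete, and indeed shows that the corollary is sensitive to the choice of representative in a way the bare index computation cannot detect.
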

In fact, both values of $c$ in (1B) occur depending on the choice of the square-tiled surface and hence its embedding into $\SL_2(\ZZ)$ since $\Gamma_0(n)$ is not normal for $n >1$. Conversely, Nori's Theorem \cite[Theorem~5.1]{Nor87} yields that the assertion of the theorem follows from the corollary for all prime numbers $p \in \NN$.\\[12pt]
In this paper, we generalize Weitze-Schmithüsen's theorem in the following sense: a square-tiled surface $\pi_1: X \to E_1$ with $g(X)=2$ and  consisting of $d$ unit squares is called \textbf{minimal} if it does not factor via an isogeny. For a minimal square-tiled surface there exists a covering $\pi_2: X \to E_2$ of the same degree such that the induced morphism $\Jac(X) \to E_1 \times E_2$ is an isogeny (of degree $d^2$).
We call $E_2$ the \textbf{complementary elliptic curve} (see Section~\ref{Sec:TMC}, \cite{BL04} or \cite{Kan03} for details). This means that $H_1(X,\ZZ)$  contains $\Lambda:= H_1(E_1,\ZZ) \oplus H_1(E_2,\ZZ)$ as a sublattice of index $d^2$ and that the symplectic pairing on $H_1(X,\ZZ)$ respects this decomposition. The action of the Veech group on $X$ induces an action on homology and thus an action on $\Lambda$. 
For a given matrix $A$ in the Veech group the induced action on $\Lambda$ is given by a pair of matrices $\widetilde{A}:=(A_1,A_2)$, where $A_1$ acts on $H_1(E_1,\ZZ)$ and $A_2$ acts on $H_1(E_2,\ZZ)$. More precisely, it follows from \cite{Kan03} that $\widetilde{A}$ is given by $(A_1=A,RA_2R^{-1})$ and that $\widetilde{A}$ is a matrix in $\SL_2(\mathcal{O}_{d^2})$ where $\mathcal{O}_{d^2}$ is the quadratic order
of discriminant $d^2$. We call the corresponding subgroup of $\SL_2(\mathcal{O}_{d^2})$ the \textbf{homology Veech group}. 
In particular, the homology Veech group is a subgroup of a Lie group of rank~$2$ while the Veech group is only a subgroup 
of a Lie group of rank $1$. However, the homology Veech group is a subgroup of $\SL_2(\mathcal{O}_{d^2})$ of infinite index. 
Recall that the notion of being a totally non-congruence subgroup of level $\mathfrak{a}$ is yet equivalent to the property $[\Gamma:\Gamma \cap \Gamma(\mathfrak{a})]=[\SL_2(\mathcal{O}):\Gamma(\mathfrak{a})]$ which still makes sense when $\Gamma$ is of infinite index.\\[12pt]
Given those cases where Weitze-Schmithüsen's result implies that the Veech group is a totally non-congruence subgroup it is most interesting to analyze primes where this fails for the homology Veech group. 
The asymmetry with respect to the spin structure which appeared in the theorem of
Weitze-Schmithüsen then vanishes (Theorem~\ref{thm:main_thm}). The topological reason for this to happen is that the number of integral Weierstra\ss \ points (spin structure) is well-defined not only for the square-tiled covering map but also for the complementary covering as we will prove.
\begin{thm4} Let $\pi_1 : L_d \to E_1$ be a square-tiled surface in $\Omega \Mtwo(2)$ 
and let $E_2$ denote the complementary elliptic curve. If $\pi_1 : L_d \to  E_1$ has even spin, then $\pi_2 : L_d \to  E_2$ has odd spin. If $\pi_1 : L_d \to E_1$ has odd spin, then $\pi_2 : L_d \to E_2$ has even spin.
\end{thm4} 
This result allows us to preserve the main properties of Weitze-Schmithü-sen's theorem but to solve its strange asymmetry
with respect to the spin structure.

\begin{thm3}
  Let $L_d$ be a square-tiled surface in $\Omega\Mtwo(2)$ with $d$ squares and let $\SL^1(L_d)$ be its homology Veech group. We distinguish the two different cases that $L_d$ is in the orbit $\mathcal{A}_d$ and $\mathcal{B}_d$ in the classification of square-tiled surfaces in $\Omega \Mtwo(2)$. Moreover if $d$ is odd let $2=\mathfrak{p}_2\mathfrak{p}_2^\sigma$ 
  be the decomposition of~$2$ into prime ideals, where $\mathfrak{p}_2$ is the distinguished prime ideal that is a common divisor of~$2$
   and $(2,2-d)$ in $\mathcal{O}_{d^2}$. 
  \begin{itemize}
   \item[(1A)] If $d$ is odd and $L_d$ is in $\mathcal{A}_d$, or $d=3$, then
   $[\SL^1(L_d):\SL^1(L_d) \cap \Gamma_0(\mathfrak{a})] = [\SL_2(\mathcal{O}_{d^2}):\Gamma_0(\mathfrak{a})]$ for all ideals $\mathfrak{a} \subset \mathcal{O}_{d^2}$ with
   $\gcd(\mathfrak{p}_2,\mathfrak{a})=1$.
   \item[(1B)] If $d$ is odd and $L_d$ is in $\mathcal{A}_d$, or $d=3$, then $[\SL^1(L_d):\SL^1(L_d) \cap \Gamma_0(\mathfrak{a})] = \frac{2}{3}[\SL_2(\mathcal{O}_{d^2}):\Gamma_0(\mathfrak{a})]$
   for all ideals with $\mathfrak{p}_2|\mathfrak{a}$.
   \item[(2A)] If $d$ is odd and $L_d$ is in $\mathcal{B}_d$, then we have
   $[\SL^1(L_d):\SL^1(L_d) \cap \Gamma_0(\mathfrak{a})] = [\SL_2(\mathcal{O}_{d^2}):\Gamma_0(\mathfrak{a})]$ for all ideals $\mathfrak{a} \subset \mathcal{O}_{d^2}$ with
   $\gcd(\mathfrak{p}_2^\sigma,\mathfrak{a})=1$.
   \item[(2B)] If $d$ is odd and $L_d$ is in $\mathcal{B}_d$, then we have $[\SL^1(L_d):\SL^1(L_d) \cap \Gamma_0(\mathfrak{a})] = \frac{2}{3}[\SL_2(\mathcal{O}_{d^2}):\Gamma_0(\mathfrak{a})]$
   for all ideals with $\mathfrak{p}_2^\sigma|\mathfrak{a}$.
   \item[(3A)] If $d$ is even, then 
   $[\SL^1(L_d):\SL^1(L_d) \cap \Gamma_0(\mathfrak{a})] = [\SL_2(\mathcal{O}_{d^2}):\Gamma_0(\mathfrak{a})]$ for all ideals $\mathfrak{a} \subset \mathcal{O}_{d^2}$ with
   $2 \nmid \N(\mathfrak{a})$ (the norm of $\mathfrak{a}$).
   \item[(3B)] If $d$ is even, then $[\SL^1(L_d):\SL^1(L_d) \cap \Gamma_0(\mathfrak{a})] = \frac{2}{3}[\SL_2(\mathcal{O}_{d^2}):\Gamma_0(\mathfrak{a})]$
   for all ideals with $2|\N(\mathfrak{a})$,
   \end{itemize}
\end{thm3}

For primitive Teichmüller curves in $\Omega\Mtwo(2)$, i.e. those not stemming from square-tiled surfaces, and fundamental discriminants,
a theorem which exactly corresponds to Theorem~\ref{thm:main_thm} was  proven in \cite[Theorem~5.1]{Wei12}. Therefore, this paper almost completes the picture for $\Omega\Mtwo(2)$ and $\Gamma_0$. Nori's theorem mainly closes the gap to principal congruence subgroups. To get a complete picture one only has to perform a similar analysis for those Teichmüller curves whose discriminants are neither square nor fundamental.

\paragraph{Acknowledgement.} I am very grateful to Martin Möller for his constant support of my work on this paper and
for many very fruitful discussions. Moreover I would like to thank Andr\'{e} Kappes for showing me how to practically calculate
elements of the homology Veech group and my former office mate Quentin Gendron for always being willing to discuss my mathematical problems. Finally, I thank the two referees for many useful comments.

\section{The Special Linear Group over square quadratic orders} \label{cha:special_linear_group}

In this section quadratic orders $\OD$ are introduced. We will almost exclusively deal here with the case of square discriminants. 
In particular, $\textrm{Spec} \ \OD$ is calculated (Theorem~\ref{prop:Spec}) and the ramification of prime numbers $p \in \NN$ over $\OD$ is discussed. Due to its similarity to the case of fundamental discriminants most of the proofs are postponed to the appendix. Afterwards, we will focus on the special linear group, define congruence subgroups and calculate some relevant indexes. 

\paragraph{Quadratic orders.} Let $K$ be a quadratic field or $\QQ \oplus \QQ$. A \textbf{quadratic order} is a subring $\mathcal{O}$ of $K$ that is also a finitely generated $\ZZ$-module such that $1 \in \mathcal{O}$ and $\mathcal{O} \otimes \QQ = K$. Any quadratic order is isomorphic to one of the form
$$\OD = \ZZ[T]/(T^2+bT+c),$$
where $b,c \in \ZZ$ and $b^2-4c=D$ and the isomorphism class only depends on the \textbf{discriminant} $D$ (see e.g. \cite[Chapter~2.2]{Bai07}). 
For every $D \equiv 0,1 \mod 4$ there hence exists a unique quadratic order. In this paper, we will only be concerned about 
the case where $D=d^2$ is a square. To the best of our knowledge, there does not seem to be any good reference, where these
special quadratic orders are treated in detail. Therefore we collect some elementary facts and explain similarities 
and differences to the non-square case. For further background the reader is referred to the appendix
\paragraph{Square discriminants.} Let us describe the structure of the quadratic order. As $D=d^2$ is a square we have that
$$\OD = \left\{ (x,y) \in \ZZ \times \ZZ \ | \ x \equiv y \mod d \right\}$$
since every subring of $\ZZ \oplus \ZZ$ is of this form.\footnote{This is true because every subring of $\ZZ$ is of the form $n\ZZ$.}
The quadratic order is hence a subring of $K = \QQ \oplus \QQ$, where addition and multiplication are defined componentwise. The algebra $K$ may be interpreted
as a substitute for the quadratic number field $\QQ(\sqrt{D})$ where $\OD$ is contained in for non-square $D$ (this
is also the reason for our notation). Furthermore we can regard $\QQ \oplus \QQ$ as an extension of $\QQ$ by the diagonal map
$\QQ \to \QQ \oplus \QQ$. This makes perfectly sense since $\ZZ$ is embedded into $\OD$ by this construction. The Galois
automorphism of $\QQ \oplus \QQ$ is given by
$$(x,y) \mapsto (x,y)^\sigma := (y,x).$$ 
\paragraph{Norm and trace.} The Galois automorphism is used to define norm and trace on $\OD$ and $\QQ \oplus \QQ$ respectively:
$$\N((x,y)):=(x,y)(x,y)^\sigma,$$
$$\tr((x,y)):=(x,y)+(x,y)^\sigma.$$
We call $\mathds{1}:=(1,1)$ and $w:=(0,d)$ the \textbf{standard  basis} of $\OD$ and they indeed generate $\OD$ as a $\ZZ-$module. Hence, $\OD$ is a Noetherian ring.

\paragraph{Ideals and modules.} For  $z\in \OD$ we denote the \textbf{principal ideal} generated by $z$ by $$(z):=z\OD:=\left\{za| a \in \OD\right\}.$$
Now let $(0) \neq \mathfrak{a} \subset \OD$ be an arbitrary ideal in $\OD$. Its norm $\N_D(\mathfrak{a})$ is 
defined as the number of the elements in $\OD/\mathfrak{a}$ if this quotient is finite. If the quotient is infinite
we set $\N_D(\mathfrak{a}):=0$. Since $D$ will always be clear from the context, we omit the subindex in the norm. The definition perfectly generalizes the norm of an element as $\N((z))=\N(z)$ holds for all $z \in \OD$ (see Lemma~\ref{lem:app:norm}).\\[12pt]
For explaining in which sense ideals in $\OD$ can be decomposed into prime ideals, it is necessary to calculate $\textrm{Spec} \ \OD$ at first. To do this, it turns out to be very useful to consider ideals as $\ZZ$-modules. Like in the case of non-square discriminants, it is essential to see that every $\ZZ$-module in $\OD$ is generated by at most two elements.

\begin{prop} \label{prop:modules}
 Let $M \subset \OD$ be a $\ZZ$-module in $\OD$. Then there exist integers $m,n \in \ZZ_{\geq 0}$ and $a \in \ZZ$
 such that $$M = [n\mathds{1}; a\mathds{1}+mw]:= n\mathds{1}\ZZ \oplus (a\mathds{1}+mw)\ZZ.$$ A non-zero $\ZZ$-module $M=[n;a+mw]$ is an ideal if and only if $m|n$, $m|a$, i.e. $a=mb$ for some $b \in \ZZ$, and $n|m\N(b+w)$.
\end{prop}

\begin{proof} See Proposition~\ref{prop:modules_appendix} and Proposition~\ref{prop:ideal_conditions_appendix}. \end{proof}

As it simplifies notation and cannot cause any confusion we will from now on leave away the symbol $\mathds{1}$ when embedding $\ZZ$ into $\OD$.  

                                                                                                                 


\begin{prop} \label{thm:spec} Let $\OD$ be a quadratic order of square discriminant. Then
\begin{eqnarray*}
 \textrm{Spec} \ \OD & = & \left\{ [p;w], \ [p;d+w] \ | \ p \in \ZZ \ \textrm{prime with } p \nmid D \right\}\\
 & \cup & \left\{ [p;w] \ | \ p \in \ZZ \ \textrm{prime with } p | D \right\}\\
\end{eqnarray*}
\end{prop}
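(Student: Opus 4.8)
The plan is to determine $\textrm{Spec}\,\OD$ by classifying all prime ideals via their norms, using the fact (from Corollary~\ref{cor:prime_condition}) that every prime ideal has prime norm $p$ and is of the form $[p;a+w]$ with $p\mid\N(a+w)$, together with the converse that every such module is indeed a prime ideal. So the task reduces to: for each prime $p\in\ZZ$, find all residues $a\bmod p$ such that $p\mid\N(a+w)$, since $[p;a+w]=[p;a'+w]$ whenever $a\equiv a'\bmod p$. Thus the key computation is to understand the congruence $\N(a+w)\equiv 0\bmod p$.

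First I would compute $\N(a+w)$ explicitly in the standard basis. Since $w=(0,d)$ and $\mathds{1}=(1,1)$, we have $a+w=(a,a+d)$, so $\N(a+w)=a(a+d)=a^2+ad$. (One should double-check the sign/normalization against the conventions in the appendix, but the upshot is a quadratic in $a$ with discriminant $d^2=D$.) Now I would split into cases according to whether $p\mid D$. If $p\nmid D$, the congruence $a^2+ad\equiv 0\bmod p$ factors as $a(a+d)\equiv 0\bmod p$, and since $p$ is prime this has exactly the two distinct solutions $a\equiv 0$ and $a\equiv -d\bmod p$; choosing the representatives $a=p$ and $a=p-d$ gives the two ideals $[p;p+w]$ and $[p;p-d+w]$, and these are distinct because $p\nmid d$ means $0\not\equiv -d\bmod p$. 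If $p\mid D$, then $d\equiv 0\bmod p$ (note $p\mid d\iff p\mid d^2=D$), so the congruence collapses to $a^2\equiv 0\bmod p$, giving the unique solution $a\equiv 0\bmod p$, i.e. the single ideal $[p;p+w]$ with representative $a=p$. Collecting these cases yields exactly the two sets in the statement.

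The remaining point is to check that there are no prime ideals of non-prime norm and none of norm $0$, and that the list is exhaustive. Corollary~\ref{cor:prime_condition} already tells us every prime ideal has prime norm and the stated form, so exhaustiveness is immediate once we have enumerated the allowed $a$'s for each $p$; conversely the same corollary guarantees each listed module genuinely is a prime ideal, so no spurious entries appear. One should also remark that $[p;p+w]$ and $[p;p-d+w]$ are the only candidates — e.g. an ideal like $[p;0+w]$ with representative $a=0$ coincides with $[p;p+w]$ since $0\equiv p\bmod p$ — so the description is well-defined independently of the choice of representative in $\{0,\dots,p-1\}$ or $\{1,\dots,p\}$.

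The main obstacle, such as it is, is purely bookkeeping: getting the explicit formula for $\N(a+w)$ right in the chosen normalization of the standard basis, and being careful that the two prime ideals in the unramified case are genuinely distinct ideals (not merely distinct generators). The argument that $p\mid D$ is equivalent to $p\mid d$ for $D=d^2$ is trivial, and everything else is a one-line consequence of the factorization of a quadratic over $\mathbb{F}_p$. Since the excerpt explicitly says ``both facts are proven in the appendix'' for the neighboring corollaries, I expect the intended proof here is likewise just a short verification citing Corollary~\ref{cor:prime_condition}, and this plan follows that route.
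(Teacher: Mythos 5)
Your proposal matches the paper's own proof (Proposition~\ref{prop:Spec} in the appendix) essentially verbatim: reduce to ideals of the form $[p;a+w]$ with $p\mid \N(a+w)=a(a+d)$ via Corollary~\ref{cor:prime_condition}, normalize the representative $1\le a\le p$, and split on $p\mid d$ versus $p\nmid d$ to obtain one, respectively two, solutions. The one point you flag and then wave away — that exhaustiveness also requires knowing every prime ideal of $\OD$ has prime norm, which Corollary~\ref{cor:prime_condition} does not literally assert — is left equally implicit in the paper, so your argument is complete to the same extent as the original.
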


\begin{proof}
 See Theorem~\ref{prop:Spec}.
\end{proof}


Based on this result a ramification theory for prime numbers over $\OD$ can be deduced. Recall that an ideal is called \textbf{irreducible} if it cannot be written as the intersection of two larger ideals.

\begin{thm} \label{thm:ramification_of_primes}
 Let $p \in \ZZ$ be a prime number. 
 \begin{itemize}
 	\item[(i)] If $p \nmid d$ then $(p)=\mathfrak{p}\mathfrak{p}^\sigma$ for a prime ideal $\mathfrak{p}$ of norm 
 	$p$ with $\mathfrak{p} \neq \mathfrak{p}^\sigma$, i.e. $p$ splits.
 	\item[(ii)] If $p|d$ then $(p)$ is an irreducible ideal which is not prime.  
 \end{itemize}  
\end{thm}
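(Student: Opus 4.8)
The plan is to use the explicit description $\OD = \{(x,y) \in \ZZ \times \ZZ \mid x \equiv y \bmod d\}$ together with Proposition~\ref{thm:spec} and Corollary~\ref{cor:prime_condition}, which together already compute $\textrm{Spec}\ \OD$. For part (i), when $p \nmid d$, Proposition~\ref{thm:spec} lists exactly two prime ideals of norm $p$, namely $\mathfrak{a} = [p; p+w]$ and $[p; p-d+w]$; since $w = (0,d)$, one checks that $[p;p+w]$ corresponds under the embedding $\OD \hookrightarrow \ZZ \times \ZZ$ to the set of pairs $(x,y) \in \OD$ with $p \mid x$, while $[p; p-d+w]$ corresponds to those with $p \mid y$, and that these two are exchanged by $\sigma$. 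So I would identify $\mathfrak{a}^\sigma$ with the second prime ideal, verify $\mathfrak{a} \ne \mathfrak{a}^\sigma$ (they are distinct because $p\nmid d$ means $p$ does not divide $d = $ the ``distance'' making $(p,0)$ and $(0,p)$ differ), and then show $\mathfrak{a}\mathfrak{a}^\sigma = (p)$. This last equality is most cleanly checked componentwise: a product of ideals $[n_1;a_1+m_1 w]\cdot[n_2;a_2+m_2 w]$ can be computed via Proposition~\ref{prop:modules}, or one simply observes that in $\ZZ \times \ZZ$ the product of ``$p \mid x$'' and ``$p \mid y$'' is ``$p \mid x$ and $p\mid y$'', which when intersected with $\OD$ is exactly $p\OD = (p)$ since $p \mid d$ is false is irrelevant here — one needs that $(p,0),(0,p)\in\mathfrak{a}\mathfrak{a}^\sigma$ generate together with multiplication by $\OD$ the full ideal $(p)$, using $\gcd$-type combinations. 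I would spell out that $\N(\mathfrak{a}) = \N(\mathfrak{a}^\sigma) = p$ forces $\N(\mathfrak{a}\mathfrak{a}^\sigma) = p^2 = \N((p))$, and since $\mathfrak{a}\mathfrak{a}^\sigma \subseteq (p)$ with equal finite norm they coincide.

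For part (ii), when $p \mid d$, Proposition~\ref{thm:spec} shows there is a \emph{unique} prime ideal of norm $p$, namely $\mathfrak{p} = [p; p+w]$, and the remark just before the Ramification paragraph already notes that $(p) \ne [p;w]^2$ — more to the point, $(p) \ne \mathfrak{p}^2$ and in fact $(p)$ is not a power of $\mathfrak{p}$ at all. The cleanest argument: compute $\N((p)) = p^2$, but examine the ideal $(p) = p\OD$ directly. Using $\OD = \ZZ\mathds{1} \oplus \ZZ w$ with $w = (0,d)$ and $w^2 = dw$ (since $(0,d)^2 = (0,d^2) = d\cdot(0,d)$ as $D = d^2$), one has $(p) = [p; pw]$ as a $\ZZ$-module. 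I would show $(p)$ is not prime because $w \cdot (w - d) = w^2 - dw = 0 \in (p)$ while neither $w$ nor $w-d = (-d, 0)$ lies in $(p) = p\OD$ (as $p \mid d$ but $p^2 \nmid$ the relevant coordinate in general — more carefully, $w \in p\OD$ would force $(0,d) = p(a,b)$ with $a \equiv b \bmod d$, impossible unless $p \mid$ both, i.e. $a = 0, d = pb$, which would need $b \equiv 0 \bmod d$, forcing $d \mid b$ hence $d^2 \mid d$, false for $d > 1$). For irreducibility I would suppose $(p) = \mathfrak{bc}$ with proper ideals and derive a contradiction from norms: $\N(\mathfrak{b})\N(\mathfrak{c}) = p^2$ would force $\{\N(\mathfrak{b}),\N(\mathfrak{c})\} = \{p, p\}$ (ruling out $1$ and $p^2$ since the factors are proper and $(p)$ itself has norm $p^2$), hence both $\mathfrak{b},\mathfrak{c}$ are prime ideals of norm $p$, hence both equal $\mathfrak{p}$ by uniqueness, giving $(p) = \mathfrak{p}^2$ — contradicting $(p) \ne \mathfrak{p}^2$, which I verify by the same $w \in \mathfrak{p}^2$ vs. $w \notin (p)$ style computation (noting $w \in \mathfrak{p} = [p;p+w]$ since $w = (p+w) - p$, so $w^2 \in \mathfrak{p}^2$, but one checks $p \in \mathfrak{p}^2$ is needed and fails).

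The main obstacle will be the bookkeeping in computing ideal products and the $\ZZ$-module generators of $\mathfrak{p}^2$ and $(p)$ precisely enough to see $(p) \subsetneq \mathfrak{p}^2$ or $(p) \ne \mathfrak{p}^2$ — this is where the square-discriminant case genuinely differs from the Dedekind-domain intuition, since $\OD$ is not integrally closed at primes dividing $d$ and $(p)$ fails to factor into primes. I expect this computation to be routine once one works consistently in coordinates $(x,y) \in \ZZ \times \ZZ$, using that multiplication is componentwise, so every ideal is pinned down by the pair of ``congruence + divisibility'' conditions it imposes on $x$ and on $y$; the subtlety is only that not every such pair of conditions defines a sub-\emph{ideal} of $\OD$ (the compatibility $x \equiv y \bmod d$ must be respected), which is exactly the content of Proposition~\ref{prop:ideal_conditions}. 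Since the excerpt says a detailed proof is deferred to the appendix, I would keep the main-text argument at the level of these norm-counting and componentwise-computation steps and cite Propositions~\ref{prop:modules}, \ref{prop:ideal_conditions}, \ref{thm:spec} and Corollary~\ref{cor:prime_condition} for the underlying facts.
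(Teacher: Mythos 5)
Your overall strategy --- compute the products of the explicitly described prime ideals as $\ZZ$-modules and compare with $(p)$ --- is the same as the paper's, and your part (i) is sound once the final step is done by hand rather than by norms: from $p^2$, $p(p+w)$, $p(p-d+w)$ and $(p+w)(p-d+w)=p(p-d+2w)$ one extracts $pd$ and $p^2$, hence $p$ (as $\gcd(p^2,pd)=p$ when $p\nmid d$) and then $pw$, giving $(p)\subseteq\mathfrak{a}\mathfrak{a}^\sigma$. Two small corrections there: $(p,0)$ and $(0,p)$ do not lie in $\OD$ unless $d\mid p$, so they cannot serve as generators; and the coprimality $p\nmid d$ is \emph{not} ``irrelevant'' to the identification $\{(x,y)\in\OD:\,p\mid x,\ p\mid y\}=p\OD$ --- it is exactly what upgrades $p\mid x-y$ and $d\mid x-y$ to $pd\mid x-y$ (for $p\mid d$ the two sets genuinely differ, e.g.\ $w=(0,d)\notin p\OD$).

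The genuine gap is in your irreducibility argument for part (ii). You deduce $\{\N(\mathfrak{b}),\N(\mathfrak{c})\}=\{p,p\}$ from $\N(\mathfrak{b})\N(\mathfrak{c})=\N((p))=p^2$, but the ideal norm is \emph{not} multiplicative in $\OD$ at primes dividing $d$ --- this is precisely the pathology of the square-discriminant case, and the paper points it out: $\mathfrak{p}^2=[p;p+w]^2=[p^2;p^2+pw]$ has norm $p^3$, not $p^2$. So the first step of your contradiction is unavailable exactly in the case you are treating. The repair avoids norms: since $p\mid d$ one has $w^2=dw\equiv 0\bmod p$, so $\OD/p\OD\cong\mathbb{F}_p[w]/(w^2)$ is local and $\mathfrak{p}=[p;p+w]$ is the \emph{unique} maximal ideal containing $(p)$; hence if $(p)=\mathfrak{b}\mathfrak{c}$ with both factors proper, then $\mathfrak{b},\mathfrak{c}\subseteq\mathfrak{p}$ and $(p)\subseteq\mathfrak{p}^2=[p^2;pw]$, which fails because $p\notin[p^2;pw]$ (the computation you already sketch, and the one the paper records in the remark after Proposition~\ref{thm:spec}). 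This also closes a loophole in your phrasing ``hence both $\mathfrak{b},\mathfrak{c}$ are prime'': irreducibility must exclude arbitrary proper factors, not only prime ones. Your non-primality argument via the zero divisors $w\cdot(w-d)=0\in(p)$ with $w,\,w-d\notin(p)$ is correct and is a nice explicit supplement to what the paper leaves implicit.
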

  \begin{proof} A detailed proof is given in the appendix \end{proof}
 Note that the main difference in comparison to the non-quadratic case is that divisors of $d$ are not ramified prime numbers.
 
\begin{cor} \label{Sec2:cor:prime_decomposition}
	Every ideal $\mathfrak{a} \subset \OD$ with $\gcd(\N(\mathfrak{a}),d)=1$ can be uniquely written as product of prime ideals.
\end{cor}
\begin{proof} See Appendix, Corollary~\ref{cor:prime_decomposition_appendix}. \end{proof}

Now let $h \in \ZZ$ be an arbitrary integer. Then the principal ideal $(h)$ may be uniquely decomposed as
\begin{align} \label{sec2:eq1}
(h)= \prod_{\substack{p|h, \\ p|d}} p^{e_p} \prod_{\substack{\mathfrak{q}|h, \\ \mathfrak{q} \nmid d}} \mathfrak{q}^{f_\mathfrak{q}} \mathfrak{q}^{\sigma f_\mathfrak{q}},
\end{align}
with $e_i,f_i \in \NN$ where the products are taken over all prime numbers respecitvely prime ideals which satisfy the given conditions.

\paragraph{The special linear group.} We define $\SL_2(\OD)$ to be the group of all~$2$ by~$2$ matrices
with entries in $\OD$ and determinant $1$. In other words, an element
$$A=\begin{pmatrix} (a_1,a_2) & (b_1,b_2) \\ (c_1,c_2) & (d_1,d_2) \end{pmatrix} \in \SL_2(\OD)$$
has determinant $(a_1d_1-b_1c_1,a_2d_2-b_2c_2) = (1,1).$ 
\paragraph{Congruence subgroups.}  For a regular ideal $\mathfrak{a} \subset \OD$ the \textbf{principal congruence subgroup of level $\mathfrak{a}$} is given by
$$\Gamma^D(\mathfrak{a}) := \left\{ \begin{pmatrix} a & b \\ c & d \end{pmatrix} \in \SL_2(\OD): \begin{pmatrix} a & b \\ c & d \end{pmatrix} 
\equiv \begin{pmatrix} 1 & 0 \\ 0 & 1 \end{pmatrix} \mod \mathfrak{a} \right\}.$$
As usual, a subgroup $\Gamma \subset \SL_2(\OD)$ is called a \textbf{congruence subgroup}, if it contains a principal congruence subgroup.
The two most impotant examples of congruence subgroups which we are interested in are
$$\Gamma^D_0(\mathfrak{a}) := \left\{ \begin{pmatrix} a & b \\ c & d \end{pmatrix} | \begin{pmatrix} a & b \\ c & d \end{pmatrix} \equiv \begin{pmatrix} * & * \\ 0 & * \end{pmatrix} \mod \mathfrak{a} \right\} $$
and
$$\Gamma^D_1(\mathfrak{a}) = \left\{ \begin{pmatrix} a & b \\ c & d \end{pmatrix} | \begin{pmatrix} a & b \\ c & d \end{pmatrix} \equiv \begin{pmatrix} 1 & * \\ 0 & 1 \end{pmatrix} \mod \mathfrak{a} \right\}.$$
One might also define $\Gamma^D(\mathfrak{a})$ as the kernel of the projection $\SL_2(\OD) \to \SL_2(\OD/\mathfrak{a})$. Indeed, even the following is true: 
\begin{prop} \label{prop_exact_sequence}
   The sequence 
   $$ 0 \to \Gamma^D(\mathfrak{a}) \to \SL_2(\OD) \to \SL_2(\OD/\mathfrak{a}) \to 0$$
   is exact.
\end{prop}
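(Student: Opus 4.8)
The plan is to prove exactness of
$$0 \to \Gamma^D(\mathfrak{a}) \to \SL_2(\OD) \xrightarrow{\rho_{\mathfrak{a}}} \SL_2(\OD/\mathfrak{a}) \to 0$$
which amounts to three things: that $\rho_{\mathfrak{a}}$ is a well-defined group homomorphism, that its kernel is exactly $\Gamma^D(\mathfrak{a})$, and that it is surjective. The first two are immediate from the definitions: reduction mod $\mathfrak{a}$ of matrix entries is a ring homomorphism $\OD \to \OD/\mathfrak{a}$, hence induces a group homomorphism on $\SL_2$ (the determinant condition is preserved since $\det$ is polynomial in the entries), and a matrix reduces to the identity precisely when it lies in $\Gamma^D(\mathfrak{a})$ by the very definition of the principal congruence subgroup. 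So the only real content is surjectivity of $\rho_{\mathfrak{a}}$, i.e. every matrix in $\SL_2(\OD/\mathfrak{a})$ lifts to a matrix in $\SL_2(\OD)$.

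For surjectivity I would argue as follows. Since $\OD$ is Noetherian (Proposition~\ref{prop:Noetherian}) and every prime ideal is maximal, the quotient ring $R:=\OD/\mathfrak{a}$ is a finite (in particular semilocal, Artinian) ring, and reduction mod $\mathfrak{a}$ is surjective on the level of rings. Given $\bar A \in \SL_2(R)$, lift its four entries arbitrarily to elements $a,b,c,d \in \OD$; then $M = \left(\begin{smallmatrix} a & b \\ c & d\end{smallmatrix}\right)$ has $\det M \equiv 1 \bmod \mathfrak{a}$, so $\det M$ is a unit congruent to $1$, but it need not equal $1$. The standard fix is: first adjust the bottom row so that $(c,d)$ generates the unit ideal in $\OD$ — over a semilocal ring one can always do this by adding a suitable multiple of $a$ to $c$ (and of $b$ to $d$) using the Chinese Remainder Theorem / prime avoidance, since $(a,b,c,d)$ already generates the unit ideal modulo $\mathfrak{a}$, hence $(a,b,c,d)$ together with $\mathfrak{a}$ generate a unit ideal, and one arranges $(c,d)=\OD$ or at least $(c,d)$ coprime to everything relevant. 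Once $(c,d) = (1)$, pick $x,y \in \OD$ with $dx - cy = 1$; then replacing the top row $(a,b)$ by $(a,b) + (\lambda)(c,d)$ for suitable $\lambda \in \OD$ does not change the reduction mod $\mathfrak{a}$ (if $\lambda \in \mathfrak{a}$) and changes $\det$ additively, so one can correct $\det$ to exactly $1$. Alternatively, and perhaps more cleanly for a paper that postpones such lemmas to an appendix, I would invoke that $\SL_2(\OD) \to \SL_2(\OD/\mathfrak{a})$ is surjective because $\OD$ decomposes, via CRT, as a product over the primary components of $\mathfrak{a}$ together with a complementary factor, reducing the statement to surjectivity of $\SL_2(A) \to \SL_2(A/I)$ for $A$ a quadratic order localized at a single prime — i.e., to the classical fact that $\SL_2$ satisfies strong approximation / that $\mathrm{SL}_n$ of a Dedekind-like ring surjects onto $\mathrm{SL}_n$ of any quotient. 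Since $\OD$ is a one-dimensional Noetherian ring, each such local factor is an Artinian local ring or a local Dedekind-type ring, where the surjectivity of $\SL_2$ onto $\SL_2$ of a quotient is elementary (a lift of an elementary generator is elementary).

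Concretely, the cleanest self-contained route exploits the structure $\OD = \{(x,y) \in \ZZ\times\ZZ : x\equiv y \bmod d\}$: an ideal $\mathfrak{a}$ and the quotient $\OD/\mathfrak{a}$ can be understood componentwise, and $\SL_2(\OD/\mathfrak{a})$ is a subgroup of $\SL_2(\ZZ/\cdot) \times \SL_2(\ZZ/\cdot)$ cut out by a compatibility condition along the "$d$-part". Surjectivity of $\SL_2(\ZZ) \to \SL_2(\ZZ/n)$ is classical (Euclidean algorithm / $\SL_2(\ZZ)$ is generated by elementary matrices), and one then checks that the compatibility condition on the $\OD$-side matches the one defining the image, so the diagonal-type lifts already land in $\SL_2(\OD)$. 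I would present this reduction and then cite the integer case.

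The main obstacle I expect is surjectivity, and within it the bookkeeping needed to lift a matrix whose bottom row $(c,d)$ is only unimodular modulo $\mathfrak{a}$ to one with $(c,d)$ genuinely unimodular in $\OD$ — this is exactly the place where one uses that $\OD$ is Noetherian of Krull dimension one (so semilocal quotients, prime avoidance) rather than an arbitrary ring. Everything else — well-definedness, the homomorphism property, and the identification of the kernel — is formal and I would dispatch it in one or two sentences, possibly deferring the surjectivity lemma to the appendix in the same style as the other foundational results in Section~\ref{cha:special_linear_group}.
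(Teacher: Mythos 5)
Your proposal is correct, and the route you single out as cleanest --- kernel and homomorphism properties are formal, and surjectivity follows because the finite quotient $\OD/\mathfrak{a}$ is a product of local rings, over which $\SL_2$ is generated by elementary matrices, each of which lifts --- is exactly the paper's proof (two lemmas in the appendix plus the observation that $\Gamma^D(\mathfrak{a})$ is the kernel by definition). The only caveat, shared with the paper, is the implicit assumption that $\OD/\mathfrak{a}$ is finite, which can fail for ideals generated by zero divisors (e.g.\ $\mathfrak{a}=(w)$ gives $\OD/\mathfrak{a}\cong\ZZ$), though surjectivity still holds there by a direct splitting argument.
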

\begin{proof} See appendix. \end{proof}
\paragraph{The index of some congruence subgroups.} Recall that the index of $\Gamma_0(n)$ in $\SL_2(\ZZ)$ is given by
\begin{align} \label{sec2:eq2}
[\SL_2(\ZZ) : \Gamma_0(n)] = n \prod_{p|n} \left( 1 + \frac{1}{p} \right),
\end{align}
where the product is taken over all prime numbers $p$ dividing $n$. In this paragraph, we calculate analogous formulas for the indexes of $\Gamma^D(\mathfrak{a})$ and $\Gamma^D_0(\mathfrak{a})$ in $\SL_2(\OD)$ for an arbitrary ideal $\mathfrak{a} \subset \OD$. It is possible to imitate the standard proof of the $\SL_2(\ZZ)$ case, which is applied e.g. in \cite[Chapter~2.4]{Kil08}.  
Obviously, the following inclusions hold for any ideal $\mathfrak{a} \subset \OD$:  
$$\Gamma^D(\mathfrak{a}) \subset \Gamma^D_1(\mathfrak{a}) \subset \Gamma^D_0(\mathfrak{a}) \subset \SL_2(\OD).$$
We now state a lemma about the indexes of these inclusions. For the proofs we refer the reader to \cite{Kil08} or \cite{Wei08}. 
\begin{lem} \label{lem:structure_of_congruence_groups} Let $\mathfrak{a} \subset \OD$ be a regular ideal of finite norm.
 \begin{itemize}
  \item[(i)] We have $$[\SL_2(\OD):\Gamma^D_0(\mathfrak{a})]= \# P^1(\OD/\mathfrak{a}),$$
   where $P^1(\cdot)$ denotes the projective space of dimension one. 
  \item[(ii)]We have  $$[\Gamma^D_0(\mathfrak{a}) : \Gamma^D_1(\mathfrak{a})] = \N(\mathfrak{a}).$$
  \item[(iii)]We have $$[\Gamma^D_1(\mathfrak{a}) : \Gamma^D(\mathfrak{a})] = \phi^D(\mathfrak{a})$$
   where $\phi^D(\cdot)$ is the generalized Euler totient function, i.e. it counts the number of units in $\OD/\mathfrak{a}$.
 \end{itemize}
\end{lem}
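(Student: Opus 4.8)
The plan is to transport all three indices into the finite group $\SL_2(R)$, where $R:=\OD/\mathfrak{a}$ is finite precisely because $\mathfrak{a}$ has finite norm, and to evaluate them there. Write $\rho_{\mathfrak{a}}\colon\SL_2(\OD)\to\SL_2(R)$ for the reduction map, which by Proposition~\ref{prop_exact_sequence} is surjective with kernel $\Gamma^D(\mathfrak{a})$. One checks that $\Gamma^D_0(\mathfrak{a})$, $\Gamma^D_1(\mathfrak{a})$ and $\Gamma^D(\mathfrak{a})$ are the full $\rho_{\mathfrak{a}}$-preimages of the Borel subgroup $B=\bigl\{\left(\begin{smallmatrix}*&*\\0&*\end{smallmatrix}\right)\bigr\}$ of $\SL_2(R)$, of its unipotent radical $U=\bigl\{\left(\begin{smallmatrix}1&*\\0&1\end{smallmatrix}\right)\bigr\}$, and of the trivial subgroup, respectively. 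Since all three contain $\ker\rho_{\mathfrak{a}}$, the correspondence theorem turns the quantities in the lemma into $[\SL_2(R):B]$, $[B:U]$ and $|U|$ inside the finite group $\SL_2(R)$. Parts (ii) and (iii) then reduce to a routine count: $U\cong(R,+)$ has $\N(\mathfrak{a})$ elements, while $B$ has $|R^\times|\cdot|R|$ elements, so that $[B:U]=|R^\times|=\phi^D(\mathfrak{a})$. The substance of the lemma is part (i).

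For (i) I would apply the orbit--stabilizer theorem to the natural left action of $\SL_2(R)$ on $P^1(R)$, realised as the set of unimodular pairs $(a,c)\in R^2$ (that is, $aR+cR=R$) modulo scaling by $R^\times$, with class written $(a:c)$. This action is transitive: given a unimodular pair $(a,c)$ we pick $x,y\in R$ with $ax+cy=1$, and then $\left(\begin{smallmatrix}a&-y\\c&x\end{smallmatrix}\right)\in\SL_2(R)$ sends $(1:0)$ to $(a:c)$. The stabilizer of $(1:0)$ is exactly $B$, since for $g\in\SL_2(R)$ fixing $(1:0)$ means precisely that the lower-left entry of $g$ vanishes. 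Orbit--stabilizer thus gives $\#P^1(R)=[\SL_2(R):B]=[\SL_2(\OD):\Gamma^D_0(\mathfrak{a})]$, which is (i). If one prefers to read $P^1(R)$ scheme-theoretically, allowing non-free line bundles, one adds that $R$ is finite, hence semilocal, so $\mathrm{Pic}(R)=0$ and every rank-one direct summand of $R^2$ is free; the two descriptions of $P^1(R)$ then coincide.

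The only genuine subtlety I anticipate is the one the surrounding text warns about, and it is organisational rather than computational. If one also wants the classical product formulas for these indices, a local--global step is needed; but because $\OD$ contains irreducible elements that are not prime, the ideal $\mathfrak{a}$ need not be a product of powers of prime ideals, so one cannot run the Chinese Remainder Theorem by factoring $\mathfrak{a}$. The right substitute is the structure theorem for finite (hence Artinian) commutative rings, which splits $R$ canonically as a finite product $R\cong\prod_i R_i$ of local rings. The invariants $\#P^1(\cdot)$, $\phi^D(\cdot)$ and $\N(\cdot)$ are multiplicative along this product, and for a local factor $R_i$ with maximal ideal $\mathfrak{m}_i$ one has $\#P^1(R_i)=|R_i|+|\mathfrak{m}_i|$ by the same split into units and non-units as for $\ZZ/p^e\ZZ$; this reduces everything verbatim to the classical computation of \cite[Chapter~2.4]{Kil08}. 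So the hard part is conceptual discipline---using the product-of-local-rings structure of the finite quotient in place of unique factorisation of ideals---not any calculation.
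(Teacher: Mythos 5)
Your argument is correct, and it is essentially the standard one: the paper gives no proof of this lemma at all, deferring to \cite{Kil08} and \cite{Wei08}, and the route you take --- push everything through the surjection $\rho_{\mathfrak{a}}$ of Proposition~\ref{prop_exact_sequence}, identify $\Gamma^D_0(\mathfrak{a})$, $\Gamma^D_1(\mathfrak{a})$, $\Gamma^D(\mathfrak{a})$ as the full preimages of the Borel subgroup $B$, its unipotent radical $U$, and the trivial subgroup of $\SL_2(R)$, and then count in the finite group --- is exactly what those references do over $\ZZ$, with your orbit--stabilizer computation of $\#P^1(R)=[\SL_2(R):B]$ correctly supplying part (i) (including the remark that $R$ semilocal makes the two descriptions of $P^1(R)$ agree). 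One point you must make explicit, however: your (correct) count gives $[B:U]=|R^\times|=\phi^D(\mathfrak{a})$ and $|U|=|R|=\N(\mathfrak{a})$, hence $[\Gamma^D_0(\mathfrak{a}):\Gamma^D_1(\mathfrak{a})]=\phi^D(\mathfrak{a})$ and $[\Gamma^D_1(\mathfrak{a}):\Gamma^D(\mathfrak{a})]=\N(\mathfrak{a})$, which is the \emph{transpose} of what items (ii) and (iii) assert. The lemma as printed has the two values interchanged (compare the classical case, where $\Gamma_0(N)/\Gamma_1(N)\cong(\ZZ/N\ZZ)^\times$ and $\Gamma_1(N)/\Gamma(N)\cong\ZZ/N\ZZ$); since only the product $\N(\mathfrak{a})\,\phi^D(\mathfrak{a})$ enters Theorem~\ref{thm:indexes_of_congruence_subgroups}, nothing downstream is affected, but you should flag the discrepancy rather than silently prove a different labelling than the one claimed. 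Your closing paragraph about replacing unique ideal factorization by the decomposition of the finite ring $\OD/\mathfrak{a}$ into local factors concerns the explicit product formulas that follow the lemma, not the lemma itself, and is sound.
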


What is left to do is to calculate the number of elements of the projective space and the number of units. 
\begin{lem} \label{lem:2.11} Let $\mathfrak{a} \subset \OD$ be a regular ideal of finite norm. Then
$$ \phi^D(\mathfrak{a}) = \N(\mathfrak{a}) \prod_{\mathfrak{p}|\mathfrak{a}} (1 - 1/\N(\mathfrak{p}))$$
and
$$ \# P^1(\OD/\mathfrak{a}) = \N(\mathfrak{a}) \prod_{\mathfrak{p}|\mathfrak{a}} (1 + 1/\N(\mathfrak{p})),$$
hold.
\end{lem}
\begin{proof} Let $R$ be any finite ring. As a consequence $R = \prod_{\mathfrak{q}} R_{\mathfrak{q}}$ is a product of local Artinian rings $R_{\mathfrak{q}}$ (see \cite{AM69}, Chapter~8). The $R_{\mathfrak{q}}$ are of the form $\OD / \mathfrak{p}^k$ with $\mathfrak{p}$ as in the formula. Let $\mathfrak{q} \subset R_{\mathfrak{q}}$ be the maximal ideal. Then one has $\OD / \mathfrak{p} \cong R_{\mathfrak{q}} / \mathfrak{q}$ and thus $\# R_{\mathfrak{q}} / \mathfrak{q} = \mathcal{N}(p).$  
The second claim follows by oberserving that
$$\#P^1(R_{\mathfrak{q}}) = 2 \# \mathfrak{q} + (\# R_{\mathfrak{q}}^\times) = \#R_{\mathfrak{q}} \left(1 + \frac{1}{\#(R_{\mathfrak{q}}/\mathfrak{q})} \right).$$
\end{proof}
This enables us to prove the following important statement.
\begin{thm} \label{thm:indexes_of_congruence_subgroups}
	Let $\mathfrak{a} \subset \OD$ be an arbitrary regular ideal.
	\begin{itemize}
		\item[(i)]  The index of $\Gamma^D_0(\mathfrak{a})$ in $\SL_2(\OD)$ is given as
		$$\N(\mathfrak{a}) \prod_{p|\N(\mathfrak{a})} \left( 1 + \frac{1}{p} \right)^{c_p(\mathfrak{a})},$$
		where the product is taken over all prime numbers $p$ and $c_p(\mathfrak{a}) = 2$, if $p \nmid D$ and $\mathfrak{a} \subset (p)$, and $c_p(\mathfrak{a}) = 1$ otherwise.
		\item[(ii)] The index of $\Gamma^D(\mathfrak{a})$ in $\SL_2(\OD)$ is given as 
		$$\N(\mathfrak{a})^3 \prod_{p|\N(\mathfrak{a})} \left( 1 - \frac{1}{p^2} \right)^{c_p(\mathfrak{a})},$$
		where we use the same notation as in $(i)$.
	\end{itemize}
\end{thm}

\begin{proof} By Lemma~\ref{lem:structure_of_congruence_groups} (i) and Lemma~\ref{lem:2.11} we have
	$$[\SL_2(\OD) : \Gamma^D_0(\mathfrak{a})] =  \N(\mathfrak{a}) \prod_{\mathfrak{p}|\mathfrak{a}} (1 + 1/\N(\mathfrak{p})),$$
	where $\mathfrak{p}|\mathfrak{a}$ means $\mathfrak{a} \subset \mathfrak{p}$. If $\mathfrak{p}|\mathfrak{a}$ then $\N(\mathfrak{p}) | \N(\mathfrak{a})$ and $p = \N(\mathfrak{p})$ is a prime number. Conversely, if $p|\N(\mathfrak{a})$, then there are at least one and at most two prime ideals $\mathfrak{p}$ with $\N(\mathfrak{p}) = p$ and $\mathfrak{p}| \mathfrak{a}$. By Theorem~\ref{prop:Spec}, there are two if and only if $c_p(\mathfrak{a}) = 2.$ This proves (i). Assertion (ii) follows similarly by Lemma~\ref{lem:structure_of_congruence_groups} (iii) and Lemma~\ref{lem:2.11}.
	
\end{proof}

In other words, the indexes of $\Gamma^D_0(\mathfrak{a})$ and of $\Gamma^D(\mathfrak{a})$ are as big as possible. 

\paragraph{Non-congruence subgroups.} A finite index subgroup $\Gamma$ of $\SL_2(\OD)$ is a congruence subgroup if and only if there exists an ideal $\mathfrak{a} \subset \OD$ such that the \textbf{level index} $[\SL_2(\OD/\mathfrak{a}) : \rho_{\mathfrak{a}}(\Gamma)]$ is equal to the index $[\SL_2(\OD):\Gamma]$ where $\rho_{\mathfrak{a}}: \SL_2(\OD) \to \SL_2(\OD/\mathfrak{a})$ is the natural projection. The group $\Gamma$ is called a \textbf{non-congruence subgroup of level $\mathfrak{a}$} if the two indices differ and $\Gamma$ is called a \textbf{totally non-congruence subgroup of level $\mathfrak{a}$} if $[\SL_2(\OD/\mathfrak{a}) : \rho_{\mathfrak{a}}(\Gamma)]=1$. Being a totally non-congruence subgroup of level $\mathfrak{a}$ is yet equivalent to the index $[\Gamma:\Gamma \cap \Gamma(\mathfrak{a})]$ being equal to the index  $[\SL_2(\OD):\Gamma(\mathfrak{a})].$ Note that the notion of being a totally non-congruence subgroup of level $\mathfrak{a}$ still makes sense in a situation where $\Gamma$ is of infinite index if this property is defined via 
$[\Gamma:\Gamma \cap \Gamma(\mathfrak{a})]=[\SL_2(\OD):\Gamma(\mathfrak{a})].$ More details on non-congruence subgroups can be found e.g. in \cite[Chapter~3]{WS12}.


\paragraph{Nori's Theorem.} We have just seen that congruence subgroups are closely related to subgroups of $\GL_n(\mathbb{F}_m)$ 
where $\mathbb{F}_m$ denotes the finite field with $m$ elements. In this situation one of the most powerful tools is Nori's theorem.
It describes the subgroups of $\GL_n(\mathbb{F}_p)$ for $n \in \NN$ arbitrary and $p \in \NN$ a prime number with $p>n$. We
closely follow the exposition in \cite[Chapter~3.2]{Rap12} here: Let $H$ be an arbitrary subgroup of $\GL_n(\mathbb{F}_p)$, let 
$X:=\left\{ x \in H \ | \ x^p = 1 \right\}$ and let $H^+= \langle X \rangle \subset H$. An element $x \in H$
lies in $X$ if and only if $(x-1)^n=1$. For fixed $x \in X$ we may thus define
$$\log x:= - \sum_{i=1}^{p-1} \frac{(1-x)^i}{i}.$$
Observing that $\log(x)^n=0$, we see that for any $t \in \overline{\mathbb{F}_p}$, the algebraic closure of $\mathbb{F}_p$,
we can define 
$$x(t):=\exp(t \cdot \log x) \quad \textrm{where} \quad \exp z=\sum_{i=0}^{p-1}\frac{z^i}{i!}.$$
We regard $x(t)$ as a $1$-parameter subgroup of $\GL_n$ and let $\widetilde{H}$ be the $\mathbb{F}_p$-subgroup of $\GL_n$
generated by the $x(t)$. 
\begin{thm} \label{thm:Noris_theorem} \textbf{(Nori, \cite{Nor87})} If $p$ is large enough (for a given n), then $H^+$ coincides with $\widetilde{H}(\mathbb{F}_p)$, the subgroup
of $\widetilde{H}(\mathbb{F}_p)^+$ generated by all unipotents contained in it.  
\end{thm}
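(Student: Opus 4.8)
Since this is Nori's theorem, the plan is to reconstruct his argument. Put $X=\{x\in H:x^p=1\}$; this is a finite set, and because $p>n$ every $x\in X$ has $(x-1)^n=0$, so $\log x$ is a nilpotent matrix over $\mathbb{F}_p$ and $t\mapsto x(t):=\exp(t\log x)$ is a homomorphism of algebraic groups $\mathbb{G}_a\to\GL_n$ defined over $\mathbb{F}_p$, satisfying $x(1)=x$ and $x(m)=x^m$ for all $m\in\ZZ$. Let $\widetilde H\subseteq\GL_n$ be the subgroup generated by the images $x(\mathbb{G}_a)$, $x\in X$. Being generated by connected subvarieties through the identity, $\widetilde H$ is a connected closed subgroup (Borel's theorem on such subgroups), and since the finite generating family is stable under the Frobenius, $\widetilde H$ is defined over $\mathbb{F}_p$. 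First I would handle the easy inclusion $H^+\subseteq\widetilde H(\mathbb{F}_p)^+$: each $x\in X$ equals $x(1)\in\widetilde H(\mathbb{F}_p)$ and is unipotent, hence lies in $\widetilde H(\mathbb{F}_p)^+$, and $H^+=\langle X\rangle$ by definition.

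For the reverse inclusion, note that for $t\in\{0,1,\dots,p-1\}=\mathbb{F}_p$ we have $x(t)=x^t\in H^+$, so $x(\mathbb{F}_p)\subseteq H^+$ for every $x\in X$; it therefore suffices to show $\widetilde H(\mathbb{F}_p)^+\subseteq\langle x(\mathbb{F}_p):x\in X\rangle$. By Borel's theorem again there are an integer $N$ bounded purely in terms of $\dim\widetilde H\le n^2$ and elements $x_1,\dots,x_N\in X$ such that the product morphism $\mu\colon x_1(\mathbb{G}_a)\times\dots\times x_N(\mathbb{G}_a)\to\widetilde H$ is surjective, all these varieties having dimension and degree bounded in terms of $n$ alone. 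Now I would lift $\mathbb{F}_p$-points through $\mu$: for $g\in\widetilde H(\mathbb{F}_p)$ the fibre $\mu^{-1}(g)$ is a nonempty $\mathbb{F}_p$-subvariety of an affine space, of dimension $\ge N-\dim\widetilde H\ge 1$, and once $p$ exceeds a threshold depending only on these bounded invariants the Lang--Weil estimates --- together with the connectedness of $\widetilde H$ and a careful choice of the word $\mu$, which control the component structure of the fibres over $\overline{\mathbb{F}_p}$ --- force $\mu^{-1}(g)$ to carry an $\mathbb{F}_p$-point. Then $g$ is a product of elements of the $x_i(\mathbb{F}_p)\subseteq H^+$, so $g\in H^+$; applying this to the unipotent elements generating $\widetilde H(\mathbb{F}_p)^+$ gives $\widetilde H(\mathbb{F}_p)^+\subseteq H^+$, and combined with the easy inclusion, $H^+=\widetilde H(\mathbb{F}_p)^+$.

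The main obstacle is this last lifting step. One must make the Lang--Weil input (equivalently, a counting estimate for the rational points of the fibres of $\mu$) \emph{uniform} in $n$, so that a single threshold ``$p$ large for given $n$'' works simultaneously for every subgroup $H\subseteq\GL_n(\mathbb{F}_p)$, and one must rule out the bad scenario in which a fibre of $\mu$ is geometrically reducible with the Frobenius permuting its components and leaving no rational point; this is where the connectedness of $\widetilde H$ is genuinely needed. An alternative, closer to Nori's own exposition, is to induct on $\dim\widetilde H$ through the restricted Lie subalgebra $\mathfrak h=\langle\log x:x\in X\rangle$ spanned by the nilpotent logarithms (closed under bracket and $p$-th power), whose associated connected subgroup is $\widetilde H$; in either approach the deep content is the passage from the geometry of $\widetilde H$ over $\overline{\mathbb{F}_p}$ to its $\mathbb{F}_p$-points. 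Full details are in \cite{Nor87}; see also \cite[Chapter~3.2]{Rap12}.
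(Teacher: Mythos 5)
The paper offers no proof of this statement: it is imported verbatim from Nori's paper (following Rapinchuk's exposition in \cite[Chapter~3.2]{Rap12}), so there is nothing in-paper to compare your argument against. Judged on its own terms, your sketch correctly reproduces the standard outline: for $p>n$ one has $(x-1)^n=0$ (note the paper's ``$(x-1)^n=1$'' is a typo), the truncated $\exp$/$\log$ give genuine one-parameter subgroups with $x(m)=x^m$ since all relevant factorials $k!$ with $k<n<p$ are invertible, the inclusion $H^+\subseteq\widetilde H(\mathbb{F}_p)^+$ is immediate from $x=x(1)$, and the reverse inclusion reduces to showing that every element of $\widetilde H(\mathbb{F}_p)^+$ is a bounded word in the sets $x(\mathbb{F}_p)\subseteq H^+$. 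You also prove the correct statement $H^+=\widetilde H(\mathbb{F}_p)^+$; the paper's phrasing has the superscripts transposed.

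However, as a proof the proposal has the gap you yourself flag: the entire content of Nori's theorem is concentrated in the lifting step, namely that for $p$ larger than a bound depending only on $n$ every $\mathbb{F}_p$-point of $\widetilde H$ (or at least every unipotent one) lies in the image of $\mu$ evaluated on $\mathbb{F}_p$-points. Your appeal to Lang--Weil is only a plausibility argument --- you would still need uniform control of the degrees and of the geometric components of the fibres of $\mu$, and the scenario of a fibre whose components are permuted by Frobenius is not actually ruled out by anything you write. Nori's own route is the one you mention in passing: the correspondence between nilpotently generated restricted Lie subalgebras and exponentially generated connected subgroups, combined with structure results for $G(\mathbb{F}_p)$ for $G$ semisimple. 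Since the paper treats the theorem as a black box and only uses its $\GL_2$ corollary, your sketch is adequate as exposition, but it should not be presented as a self-contained proof; the deep step must remain a citation to \cite{Nor87}.
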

If we specify to the case of $\GL_2$, which is the only case of interest here, we get (compare \cite[Chapter~3.2]{Rap12}):
\begin{cor} \label{cor:Nori}
  For any subgroup of $H \subset \GL_2(\mathbb{F}_p)$, the subgroup $H^+$ has either $1$ or $p$ or $p^3-p$ elements.
\end{cor}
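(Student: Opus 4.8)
The plan is to deduce the three possibilities directly from Nori's Theorem (Theorem~\ref{thm:Noris_theorem}) applied with $n=2$, combined with the elementary classification of connected algebraic subgroups of $\SL_2$ that are generated by unipotent one-parameter subgroups.

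First I would note that an element $x \in \GL_2(\mathbb{F}_p)$ satisfies $x^p = 1$ exactly when it is unipotent: its eigenvalues in $\overline{\mathbb{F}_p}$ are $p$-th roots of unity, hence both equal to $1$. So $X = \{x \in H : x^p = 1\}$ is the set of unipotent elements of $H$ and $H^+ = \langle X \rangle$ is generated by unipotents. Theorem~\ref{thm:Noris_theorem} then gives $H^+ = \widetilde{H}(\mathbb{F}_p)^+$, where $\widetilde{H} \subseteq \GL_2$ is the connected algebraic subgroup generated by the unipotent one-parameter subgroups $x(t)$, $x \in X$; since every $x(t)$ has determinant $1$, $\widetilde{H}$ is a connected subgroup of $\SL_2$ generated by unipotent one-parameter subgroups.

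Next I would classify such $\widetilde{H}$. If $X$ contains no nontrivial unipotent, then $\widetilde{H}$ is trivial and $|H^+| = 1$. Otherwise $\dim \widetilde{H} \geq 1$. If every unipotent one-parameter subgroup occurring among the $x(t)$ is the same root subgroup $U$ (a conjugate of $\left(\begin{smallmatrix} 1 & * \\ 0 & 1 \end{smallmatrix}\right)$), then $\widetilde{H} = U \cong \mathbb{G}_a$, every element of $\widetilde{H}(\mathbb{F}_p) \cong \mathbb{F}_p$ is unipotent, and $H^+ = \widetilde{H}(\mathbb{F}_p)$ has $p$ elements. Finally, suppose two of these subgroups differ; then their Lie algebras are two distinct lines in $\mathfrak{sl}_2$, so $\dim \widetilde{H} \geq 2$. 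A connected $2$-dimensional subgroup of $\SL_2$ is a Borel, whose unipotent radical is its unique maximal unipotent subgroup and is $1$-dimensional; since every unipotent one-parameter subgroup of a Borel lies in that radical, a Borel generated by unipotent one-parameter subgroups would be only $1$-dimensional, a contradiction. Hence $\dim \widetilde{H} = 3$ and $\widetilde{H} = \SL_2$. As $\SL_2(\mathbb{F}_p)$ is generated by its transvections $\left(\begin{smallmatrix} 1 & t \\ 0 & 1 \end{smallmatrix}\right)$ and $\left(\begin{smallmatrix} 1 & 0 \\ s & 1 \end{smallmatrix}\right)$, which are unipotent, we get $H^+ = \SL_2(\mathbb{F}_p)^+ = \SL_2(\mathbb{F}_p)$, of order $p(p-1)(p+1) = p^3 - p$.

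The step that needs the most care is ruling out an intermediate connected subgroup generated by unipotents, i.e. the Borel argument above; the underlying fact is that two distinct unipotent one-parameter subgroups of $\SL_2$ already generate all of $\SL_2$. One should also keep in mind that Nori's Theorem requires $p$ above a bound depending on $n$; for the finitely many small excluded primes the count is verified directly (for instance $\GL_2(\mathbb{F}_2) \cong S_3$, whose subgroups generated by the order-$2$ elements have orders $1$, $2$, and $6 = 2^3 - 2$).
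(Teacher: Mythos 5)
Your argument is correct and follows exactly the route the paper intends: the paper gives no proof of this corollary, deferring to the cited exposition of Nori's theorem, and your deduction (unipotents $=$ $p$-torsion, then the classification of connected subgroups of $\SL_2$ generated by unipotent one-parameter subgroups into $\{1\}$, a root group, or all of $\SL_2$) is precisely that deduction with the details supplied. The care you take with the two-dimensional (Borel) case and with the small primes excluded by Nori's bound is appropriate and correct.
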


\section{Teichmüller curves} \label{Sec:TMC}

A \textbf{Teichmüller curve} \index{Teichmüller curve} $C \to \Mg$ is an algebraic curve in $\Mg$ that is totally geodesic with respect to the Teichmüller metric. Every Teichmüller curve stems from the projection of a $\SL_2(\RR)$-orbit of a translation surface $(X,\omega) \in \Omega\Mg$ to $\Mg$ (see e.g. \cite{Möl11}). The stabilizer of $(X,\omega)$ under the $\SL_2(\RR)$-action is called its \textbf{Veech group}.  On the contrary, the projection of the $\SL_2(\RR)$-orbit of a translation surface $(X,\omega)$ to $\Mg$ yields a Teichmüller curve if and only if its \textbf{Veech group} is a lattice. Moreover the Veech group is never cocompact. Although Teichmüller curves in higher genera $\Mg$ are not satisfactorily well understood yet, there has been great progress on Teichmüller curves in $\mathcal{M}_2$ in the last fifteen years (see e.g. \cite{Bai07}, \cite{McM03}, \cite{Muk11}).\\[12pt]
Translation surfaces can also be characterized by the fact that there exists an atlas on $X$ such that all transition maps (away from the zeroes $Z(\omega)$ of $\omega$) are given by translations: the charts are obtained by integrating $\omega$ locally on $X$ (see \cite{HS06}, Sect. 1.1.3). This gives rise to a distinguished metric on $X$, which is given by pulling back the Euclidean metric on $\CC$ via the charts. A zero of $\omega$ corresponds to a singularity of the metric. A geodesic segment connecting two singularities is called a \textbf{saddle connection}.\\[12pt]
The latter point of view allows another characterization of the Veech group: let $\Aff^+(X,\omega)$ denote the group of orientation-preserving homeomorphisms of $X$ that are affine on $X \setminus Z(\omega)$ with respect to the charts given by integrating $\omega$. The matrix part of the affine map is independent of the charts since transition maps are translations. This provides a map $D: \Aff^+(X,\omega) \to \SL_2(\RR)$ and the image of $D$ is equal to the Veech group of $(X,\omega)$.\\[12pt] 
%
The simplest examples of Teichmüller curves are generated by \textbf{square-tiled surfaces} (or Origamis), i.e. translation 
surfaces $(X,\omega)$, where $X$ is obtained as a covering of a torus ramified over at most one point and $\omega$ is the 
pullback of the holomorphic one-form on the torus. A square-tiled surface is called \textbf{primitive} if the developing vectors of the
saddle connections span $\ZZ^2$. The square-tiled surfaces which we will treat here exclusively are the
$L$-shaped polygons $L(m,n)$ that all lie $\Omega \Mtwo(2)$ (compare Figure $1$).
\begin{center}
\includegraphics[height=3cm]{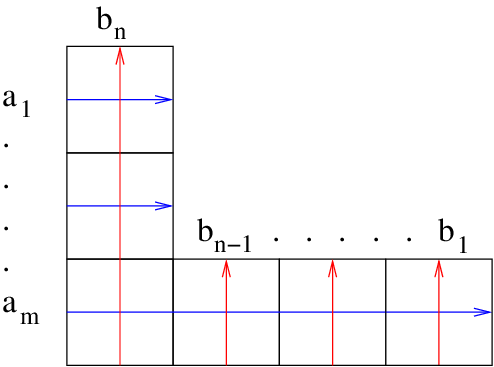}\\
Figure 1. A $L(m,n)$ square-tiled surface with indicated pullback of the canonical symplectic basis of $H_1(E_1,\ZZ)$.
\end{center}
\paragraph{Calculating elements of the Veech group.} Recently, Mukamel designed an algortihm that allows to effectively calculate generators of the Veech group, \cite{Muk13}. For square-tiled surfaces, this task is even easier and described in \cite{WS05}. For our purpose, it is not important to know the complete Veech group but it suffices to calculate some of its parabolic elements. These are obtained by \textbf{cylinder decompositions}: A \textbf{cylinder} of $(X,\omega)$ is a maximal connected set of homotopic simple closed geodesics. Consider a direction of an arbitrary saddle connection of $(X,\omega)$. Then $(X,\omega)$ decomposes into at most two maximal cylinders, bounded above and below by unions of saddle connections. The \textbf{modulus} $m$ of a cylinder is its height divided by its circumference. In fact, the moduli $m_1, m_2$ of the two cylinders of the decomposition must be rationally related for square-tiled surfaces, i.e. there is a number $r \in \QQ$ such that $m_1 r = m_2$. The main ingredient for calculating parabolic elements of the Veech group is the following proposition.
\begin{prop}[Veech, \cite{Vee89}, Proposition 2.4] \label{sec3:prop:veech} If $(X,\omega)$ has a decomposition into cylinders for the horizontal direction such that the moduli of the cylinders have rational ratios, then the Veech group contains the element
	$$T = \begin{pmatrix} 1 & \alpha \\ 0 & 1 \end{pmatrix}$$
where $\alpha$ is the least common multiple of the inverse of the moduli. 
\end{prop}
For a given direction of a saddle connection $(\theta_1,\theta_2) \in \RR^2$ a parabolic element of the Veech group can thus be obtained by rotation. If $\theta_2 \neq 0$, then
\begin{align} \label{sec3:eq1}
 \begin{pmatrix} \theta_1 & 0 \\ \theta_2 & 1 \end{pmatrix} \cdot \begin{pmatrix} 1 & \alpha \\ 0 & 1 \end{pmatrix} \cdot \begin{pmatrix} \theta_1 & 0 \\ \theta_2 & 1 \end{pmatrix}^{-1}
\end{align}
is an element of the Veech group, where the height is measured in vertical direction and $\alpha$ as in Proposition~\ref{sec3:prop:veech}. If $\theta_2 = 0$ then the circumference of the cylinders is measured in vertical direction and their height is measured in horizontal direction such that the Veech group contains
\begin{align} \label{sec3:eq2}
\begin{pmatrix} 0 & 1 \\ 1 & 0 \end{pmatrix} \cdot \begin{pmatrix} 1 & \alpha \\ 0 & 1 \end{pmatrix} \cdot  \begin{pmatrix} 0 & 1 \\ 1 & 0 \end{pmatrix}^{-1}.
\end{align}

\paragraph{The decomposition of homology.}  \label{sec:action_on_homology} A square-tiled surface $\pi_1: X \to E_1$ with $g(X)=2$ and  consisting of $d$ unit squares is called \textbf{minimal} if it does not factor via an isogeny. In this case, it is known that $X$ is 
\textbf{split Jacobian}, i.e. for a minimal square-tiled surface there exists a torus $E_2$ and a covering $\pi_2: X \to E_2$ of the same degree such that  the induced morphism $\Jac(X) \to E_1 \times E_2$ is an isogeny of degree $d^2$ (compare \cite{Kuh88}). We call $E_2$ the \textbf{complementary elliptic curve}. Note that we may identify the $2$-torsion points of $\Jac(X)$ and the $2$-torsion points of $E_1 \times E_2$ if $d$ is odd. In our case, $E_2$ can be chosen in a canonical way. Roughly speaking, it is just the complementary variety of $E_1 \subset \Jac(X)$. An explicit construction of $E_2$ is given e.g. in \cite[Proposition~2.7]{Kan03} (see also \cite[Chapters~5 and 12]{BL04}). Most importantly, $H_1(E_2,\ZZ)$ is the symplectic orthogonal complement of $H_1(E_1,\ZZ)$ inside $H_1(X,\ZZ)$.
\begin{rem}
  A square-tiled surface is minimal if and only if it is primitive (see \cite[Chapter 2]{Kap11}).
\end{rem}

This implies that $H_1(X,\ZZ)$ contains $\Lambda:= H_1(E_1,\ZZ) \oplus H_1(E_2,\ZZ)$
 as a sublattice of index $d^2$ and that the symplectic pairing on $H_1(X,\ZZ)$ respects this decomposition and is of type
$(d)$ on each direct summand. The action of the affine group $\Aff^+(\pi_1)$ induces an action on homology and thus induces an action on
 $H_1(E_i,\ZZ)$. We denote these automorphism groups by $\Gamma_i$ and let $\Gamma_i(\phi)$ be the image of $\phi \in \Aff^+(\pi_1)$.
\begin{lem} If we regard the affine group $\Aff^+(\pi_1)$ as a subgroup of $\SL_2(\ZZ)$ (and not of $\PSL_2(\ZZ)$) then the differential map
$D: \Aff^+ \to \Gamma(\pi_1)$ is an isomorphism. More precisely, $\Gamma_1(\phi)=D(\phi)$. Hence there is a natural homomorphism
$f: \Gamma(\pi_1) \to \Gamma_2$.
\end{lem}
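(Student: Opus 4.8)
The plan is to break the statement into three parts: injectivity of the derivative map $D$ (surjectivity onto $\Gamma(\pi_1)$ being a tautology, $\Gamma(\pi_1)$ being defined as the image), the identity $\Gamma_1=D$, and the construction of $f$; the first and third parts are formal once the second is established.

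For injectivity one argues as follows. As for any translation surface, $D\colon \Aff^+(\pi_1)\to\SL_2(\RR)$ is a group homomorphism, and its kernel consists of the translation automorphisms of $(L_d,\omega)$. Since $L_d\in\Omega\Mtwo(2)$ has a unique singularity — the cone point of angle $6\pi$ — and affine diffeomorphisms preserve cone angles, every $\phi\in\Aff^+(\pi_1)$ fixes this point; in a flat chart centred at it a translation automorphism fixing the point is the identity on a neighbourhood, hence on all of $L_d$ by connectedness. Thus $\ker D=\{\Id\}$ and $D\colon\Aff^+(\pi_1)\to\Gamma(\pi_1)$ is an isomorphism. (Equivalently: primitive square-tiled surfaces have no nontrivial translation automorphisms; and this is exactly where it matters that $\Aff^+(\pi_1)$ is viewed inside $\SL_2(\ZZ)$ rather than $\PSL_2(\ZZ)$, since the hyperelliptic involution lies in $\Aff^+(\pi_1)$ with derivative $-\Id\neq\Id$.)

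For $\Gamma_1=D$, I would use the period map $\mathrm{per}\colon H_1(L_d,\ZZ)\to\ZZ^2$, $\gamma\mapsto\int_\gamma\omega$, whose image lies in $\ZZ^2$ because the surface is square-tiled and which is surjective. Writing $A=D(\phi)$, the relation $\phi^*\omega=A\cdot\omega$ read in flat charts yields the equivariance $\mathrm{per}(\phi_*\gamma)=A\cdot\mathrm{per}(\gamma)$. Since $A$ is invertible, $\phi_*$ preserves $\ker(\mathrm{per})$; and $\ker(\mathrm{per})=\ker(\pi_{1*})=H_1(E_2,\ZZ)$, because $\mathrm{per}=\mathrm{per}_{E_1}\circ\pi_{1*}$ with $\mathrm{per}_{E_1}$ injective and because $H_1(E_2,\ZZ)$ is the saturated sublattice whose rational span is $\ker(\pi_{1*})_\QQ$. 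As $\phi_*$ is symplectic it therefore also preserves $H_1(E_2,\ZZ)^\perp=H_1(E_1,\ZZ)$, so that $\Gamma_1$ and $\Gamma_2$ are well defined homomorphisms on $\Aff^+(\pi_1)$; the non-vanishing of $\phi^*\omega$ together with invertibility of $A$ is precisely what rules out that $\phi_*$ swaps the two summands. Finally, the pullback cycles indicated in Figure~1 form a basis of $H_1(E_1,\ZZ)\subset H_1(L_d,\ZZ)$ that $\mathrm{per}$ carries to the standard symplectic basis of $\ZZ^2$; reading the equivariance $\mathrm{per}(\phi_*\gamma)=A\cdot\mathrm{per}(\gamma)$ in this basis gives exactly $\Gamma_1(\phi)=A=D(\phi)$.

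The homomorphism $f$ is then defined by $f:=\Gamma_2\circ D^{-1}=\Gamma_2\circ\Gamma_1^{-1}\colon\Gamma(\pi_1)\to\Gamma_2$, which makes sense precisely because $D=\Gamma_1$ is an isomorphism; concretely $f(A)$ is the action on $H_1(E_2,\ZZ)$ of the unique affine diffeomorphism with derivative $A$. I expect the only genuine point to be the middle paragraph — specifically, checking that $\mathrm{per}$ identifies $H_1(E_1,\ZZ)$ with $\ZZ^2$ compatibly with the chosen symplectic basis, and the bookkeeping that $\phi_*$ respects the decomposition $\Lambda=H_1(E_1,\ZZ)\oplus H_1(E_2,\ZZ)$ summand by summand rather than exchanging the two factors; everything else is routine.
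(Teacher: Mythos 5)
Your treatment of the second and third assertions is sound and is essentially an expanded version of what the paper compresses into ``the second statement is just the definition of the action of the affine group'': the period map $\mathrm{per}=\mathrm{per}_{E_1}\circ\pi_{1*}$, the identification of $H_1(E_2,\ZZ)$ with $\ker(\pi_{1*})$ as the symplectic orthogonal complement of the pullback lattice, and the equivariance $\mathrm{per}(\phi_*\gamma)=A\cdot\mathrm{per}(\gamma)$ all check out, and your remark about the hyperelliptic involution correctly explains the $\SL_2$ versus $\PSL_2$ caveat. (One cosmetic slip: $\mathrm{per}$ sends the pullback basis $c_1,d_1$ to $d$ times the standard basis of $\ZZ^2$, not to the standard basis itself; the rescaling is harmless for concluding $\Gamma_1(\phi)=D(\phi)$.)

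The gap is in the injectivity argument. There is no flat chart centred at the singularity: it is a cone point of angle $6\pi$, and the cone of angle $6\pi$ admits nontrivial isometries fixing the apex whose derivative in the flat charts on the regular part is the identity, namely the rotations by $2\pi$ and $4\pi$ (in the natural coordinate $\zeta$ with $\omega=\zeta^2\,d\zeta$ these are $\zeta\mapsto\rho\zeta$ with $\rho^3=1$). Such a map cyclically permutes the three horizontal separatrices issuing from the zero, so a translation automorphism fixing the cone point need not be the identity on a punctured neighbourhood, and your open--closed/connectedness step never gets started; what must be excluded is precisely an order-$3$ translation automorphism, and that is a global statement about the cover (it would exhibit $L_d$ as a cyclic cover of a torus), not a local one. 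This is exactly the content the paper imports from Kani, namely that minimal genus-two covers of elliptic curves have no internal automorphisms for $d>2$ \cite[Proposition~2.1]{Kan03}. The fact you state parenthetically as ``equivalent'' --- that primitive square-tiled surfaces have no nontrivial translation automorphisms --- is therefore not a reformulation of your local argument but the missing ingredient itself; replace the local argument by that citation (or by a monodromy argument proving it) and the proof is complete.
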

\begin{proof} The first statement follows from the fact that minimal genus~2 covers have no internal automorphisms if $d>2$
\cite[Proposition~2.1]{Kan03}, i.e. the only autmorphism of the square-tiled surface which preserves the covering is the identity map.  Hence $D: \Aff^+(\pi_1) \to \Gamma(\pi_1)$ is an isomorphism. The second statement is just
the definition of the action of the affine group.
\end{proof}
From now on and until the end of this paper we fix the bases of both of the homology groups $H_1(E_i,\ZZ)$. The explicit choice
of the basis on $H_1(E_1,\ZZ)$ is indicated in Figure~1. Kani's result in \cite[Chapter~4 and 5]{Kan03} implies that for this choice of bases the reduction
of $f \mod d$ is conjugation by the diagonal matrix $R:=\diag(-1,1)$. Thus for a given matrix $A$ in the Veech group the induced action on $\Lambda$ is given by a
pair of matrices $\widetilde{A}:=(A_1=A,RA_2R^{-1})$ where $A_i$ acts on $H_1(E_i,\ZZ)$ and $\widetilde{A} \in \SL_2(\OD)$. 
We call the corresponding group the \textbf{homology Veech group}.
\begin{exa} \label{exa:l22} The algorithm described in \cite{WS05} yields that the Veech group of $L(2,2)$ is generated by the matrices
$$\begin{pmatrix} 1 & 0 \\ 2 & 1 \end{pmatrix}, \quad  \begin{pmatrix} 0 & 1 \\ -1 & 0 \end{pmatrix}.$$	
Analyzing their action on homology it follows that
$$\begin{pmatrix} 1 & 0 \\ 2-w & 1 \end{pmatrix}, \quad \begin{pmatrix} 0 & 1 \\ -1 & 0 \end{pmatrix}$$
is a set of generators of the homology Veech group (details on how to do the calculation on homology explicitly are presented on page~\pageref{sec3:calculating:homologyVG}).
\end{exa}
\paragraph{Spin structure.} Due to a result of Kani in \cite{Kan03} the spin structure of a square-tiled surface in $\Omega \Mtwo(2)$ may be defined via the number of integral Weierstra\ss \ points: recall that in genus~$2$ the \textbf{Weierstra\ss \ points} of a square-tiled surface are the six fixed points of the hyperelliptic involution. A Weierstra\ss \ point is called \textbf{integral} if it is a vertex of one of the squares.
\begin{prop} \textbf{(Kani, \cite[Proposition~2.4]{Kan03}, see also \cite{Kuh88})} A primitive square-tiled surface in $\Omega \Mtwo(2)$ consisting 
of $d$ squares has
\begin{itemize}
 \item for $d=3$, exactly 1 integral Weierstra\ss \ point
 \item for even $d$, exactly~$2$ integral Weiersta\ss \ points 
 \item for odd $d>3$, either $1$ or $3$ integral Weierstra\ss \ points and both values occur.
\end{itemize}
\end{prop}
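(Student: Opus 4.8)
The plan is to prove Kani's proposition by a direct combinatorial count of the Weierstraß points on an $L(m,n)$ (equivalently, on any square-tiled surface in $\Omega\Mtwo(2)$), reducing everything to arithmetic modulo $2$ on the lattice $\ZZ^2$ that tiles the surface. Recall that $X$ is a degree $d=m+n-1$ cover of $E_1=\CC/\ZZ^2$ branched over a single point, and the hyperelliptic involution $\iota$ on $X$ covers the elliptic involution $z\mapsto -z$ on $E_1$ (since $\iota$ is unique and must descend). Thus the six Weierstraß points of $X$ lie above the four $2$-torsion points of $E_1$, i.e.\ above the images of $\tfrac12\ZZ^2/\ZZ^2 = \{0,\tfrac12,\tfrac{\tau}{2},\tfrac{1+\tau}{2}\}$. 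A point $P\in X$ above a $2$-torsion point $Q\in E_1$ is a Weierstraß point exactly when $\iota$ fixes $P$; over the unbranched $2$-torsion points this happens for the fiber points that are themselves fixed by the deck-covering-compatible involution, and over the branch point it is automatic for the ramification point. So the whole question becomes: \emph{how many points in the fibers over the four half-lattice points are fixed by the lift of $-1$?}

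\textbf{Step 1.} Set up coordinates: realize $L(m,n)$ as a union of $d$ unit squares in $\ZZ^2$, with the branch point at a vertex, and describe the monodromy explicitly via the two generating horizontal and vertical permutations. The lift $\tilde\iota$ of $z\mapsto -z$ is the affine map $(x,y)\mapsto (-x,-y)$ acting on the square complex, suitably normalized so that it maps the polygon to itself; its fixed points on $X$ are the points of $\tfrac12\ZZ^2$ that lie on the surface, counted with the identifications coming from the edge gluings. **Step 2.** Count the integral Weierstraß points: these are precisely the vertices of squares that are fixed by $\tilde\iota$, i.e.\ the lattice points $p\in\ZZ^2\cap L(m,n)$ with $2p\equiv 0$ after accounting for the gluing, which forces $p$ to be a half-integer translate of the fixed center — so one reduces to checking which of the (at most four) points of $\tfrac12\ZZ^2$ congruent-class representatives actually occur as vertices. **Step 3.** Do the case analysis on the parity of $d$, using the standard normal forms for the $L(m,n)$ (or the prototypes of \cite{McM05}/\cite{Bai07}): for $d=3$ the surface is $L(2,2)$ and one checks by hand that exactly one vertex is a Weierstraß point; for $d$ even, the translation structure forces the center of symmetry to sit at a vertex and exactly one further vertex to be fixed, giving $2$; for $d$ odd one shows the count is always odd and bounded by $3$ (since there are only six Weierstraß points and the non-integral ones come in $\iota$-orbits... ) — actually one shows directly that $1$ and $3$ are the only possibilities and exhibits one example of each, e.g.\ $L(3,2)$ versus $L(4,2)$ reduced mod the relevant identifications.

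\textbf{The hard part} will be Step 3, specifically pinning down \emph{why} the parity of the number of integral Weierstraß points is an invariant of the $\SL_2(\ZZ)$-orbit (so that it genuinely distinguishes $\mathcal{A}_d$ from $\mathcal{B}_d$) rather than just of the particular polygon $L(m,n)$ — this is where Kani's identification of the count with the spin structure (the Arf invariant of the theta characteristic) does the real work, via the correspondence between integral Weierstraß points and odd theta characteristics recalled in the excerpt. Once one knows that the integral Weierstraß points correspond to the odd theta characteristics supported on the sublattice $\Lambda$, the parity count becomes the Arf invariant computation, and the three bullets follow from the known value of the Arf invariant (spin) on the $\Omega\Mtwo(2)$ strata: it is fixed ($=1$ integral point, up to the ``$+2$'' coming from a symmetric pair) for $d=3$, forced for $d$ even by the two-torsion matching $\Jac(X)[2]\cong (E_1\times E_2)[2]$ failing (parity of $d$), and genuinely bimodal for $d$ odd. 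I would therefore present Steps 1–2 as an elementary lattice computation and then invoke the theta-characteristic dictionary to conclude Step 3, rather than re-deriving the spin invariance from scratch.
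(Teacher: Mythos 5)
First, a point of reference: the paper itself contains no proof of this statement --- it is imported verbatim from Kani (\cite[Proposition~2.4]{Kan03}), in the reformulation of \cite{HL06} --- so your proposal has to stand entirely on its own, and as written it is a plan whose decisive step is the one you explicitly defer.

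The elementary part of your strategy is sound and can be pushed further than you push it: the hyperelliptic involution descends to $z\mapsto -z$ on $E_1$, the six Weierstra\ss\ points lie over the four $2$-torsion points, and an involution of a finite set fixes a number of points congruent to the cardinality mod $2$. Applied fiberwise this gives: over $0$ the cone point is fixed and the remaining $d-3$ sheets contribute a number of further fixed points congruent to $d-3 \bmod 2$, so the integral count is exactly $1$ for $d=3$, odd for odd $d$, and even and $\geq 2$ for even $d$; over each nonzero $2$-torsion point the fiber of size $d$ contributes at least one fixed point when $d$ is odd, and since the total is $6$ this forces the odd-$d$ count into $\{1,3\}$. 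Note this argument never uses the $L$-shape, so your reduction to $L(m,n)$ (which leans on Theorem~\ref{cor:classification}, itself built on this very proposition) is both unnecessary and a potential circularity. What the parity bookkeeping does \emph{not} give is ``exactly $2$'' for even $d$: the values $4$ and $6$ are not excluded, and your assertion that ``the translation structure forces \dots exactly one further vertex to be fixed'' is precisely the nontrivial content of Kani's result, not a step toward it --- this is the genuine gap, and closing it requires the theta-characteristic/isogeny argument you only gesture at. A second concrete error: your witnesses for ``both values occur'' for odd $d$, namely $L(3,2)$ versus $L(4,2)$, have $d=4$ and $d=5$ respectively, so they do not exhibit two surfaces with the same odd $d$; the correct pair for $d=5$ is $L(4,2)\in\mathcal{A}_5$ versus $L(3,3)\in\mathcal{B}_5$.
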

If $d$ is odd, the \textbf{spin structure} of the square-tiled surface is called even if the number of Weierstra\ss \ point is $1$ and
otherwise it is called odd. The notion of spin structure we use here is a special case of a more general concept, which for an arbitrary Riemann surface $X \in \mathcal{M}_g$ corresponds to the choice of a square-root of the canonical line bundle up to isomorphism, see e.g. \cite[Chapter 6]{McM05}.
\begin{rem}
  The result of Kani was originally formulated in the language of abstract algebraic geometry. Its relevance for
  square-tiled surfaces was first observed in \cite[Remark~3.4]{Möl05}. Our formulation of the result goes back to 
  \cite[Proposition~4.3]{HL06}.
\end{rem}
Hubert and  Leli\`{e}vre in \cite[Theorem 1.1]{HL06} and more generally McMullen in \cite[Corollary 1.2]{McM05} proved that the $\SL_2(\RR)$-orbits of square-tiled surfaces in $\Omega \Mtwo(2)$ can be distinguished by their spin structure. We restate their result in a way which is more applicable for us (compare also \cite[Theorem B]{WS12}):

\begin{thm} \label{cor:classification} \textbf{(Hubert/Leli\`evre/McMullen)} The set of primitive square-tiled surfaces in $\Omega\Mtwo(2)$ with $d$ squares forms one single
$\SL_2(\ZZ)$ orbit, if $d$ is even or $d=3$. They form two orbits called $\mathcal{A}_d$ (corresponding to even spin) and $\mathcal{B}_d$ (corresponding to odd spin) distinguished
by their number of integral Weierstra\ss \ points, if $d$ is odd and greater than $3$.
A square-tiled surface $L(m,n)$ with $d=m+n-1$ squares belongs to $\mathcal{A}_d$ if both $m$ and $n$ are even and belongs to $\mathcal{B}_d$ if both
$m$ and $n$ are odd. Each such square-tiled surface-orbit is generated by some $L(m,n)$.
\end{thm}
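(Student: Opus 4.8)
The plan is to combine a geometric normalisation of square-tiled surfaces with a combinatorial connectedness argument, using the number of integral Weierstra\ss{} points as the invariant that separates the two orbits. Throughout, ``surface'' means a primitive square-tiled surface in $\Omega\Mtwo(2)$ with $d$ squares.

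First I would reduce every such surface $(X,\omega)$ to an $L$-shape. The horizontal direction is completely periodic, so $X$ is a union of horizontal cylinders, and in this stratum a completely periodic direction produces either one or two cylinders. Using a Dehn twist (a parabolic element of $\SL_2(\ZZ)$) one passes to a direction with a two-cylinder decomposition; such a decomposition is the ``staircase'' configuration and is encoded by integer data (the two circumferences, the two heights, and the two twists) because $X$ is square-tiled. Normalising the twists by horizontal Dehn twists in the two cylinders, and then applying one further shear adapted to the (also periodic) vertical direction, one reaches an $L(m,n)$ with $m+n-1=d$, exactly as in the prototype reduction of McMullen \cite{McM05} and Hubert--Leli\`evre \cite{HL06}; primitivity forces $m,n\ge 2$ and ensures that no isogeny is introduced. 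Hence every surface lies in the $\SL_2(\ZZ)$-orbit of some $L(m,n)$.

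Next I would show that the number of integral Weierstra\ss{} points is an $\SL_2(\ZZ)$-invariant. The six Weierstra\ss{} points are the fixed points of the hyperelliptic involution, hence are canonically attached to $X$ and are permuted by every affine diffeomorphism; since an element of $\SL_2(\ZZ)$ preserves the lattice $\ZZ^2$ it sends unit squares to unit squares and vertices to vertices, so it carries integral Weierstra\ss{} points to integral Weierstra\ss{} points. (Equivalently this number records the parity of the associated theta characteristic, a topological invariant.) Combined with \cite[Proposition~2.4]{Kan03} this already yields that for odd $d\ge 5$ there are \emph{at least} two $\SL_2(\ZZ)$-orbits, while the invariant is constant for even $d$ and for $d=3$. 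A short, explicit location of the six Weierstra\ss{} points inside the polygon $L(m,n)$ then determines which of them are vertices of squares, and the answer depends only on the parities of $m$ and $n$; this is exactly where one reads off that $L(m,n)$ with $m,n$ both even lies in $\mathcal{A}_d$ and with $m,n$ both odd lies in $\mathcal{B}_d$, and that for even $d$ (when the parities of $m$ and $n$ necessarily differ) the count equals $2$, consistently with Kani's proposition.

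Finally I would prove transitivity of $\SL_2(\ZZ)$ on the $L(m,n)$ with a fixed $d$ inside each parity class. The idea is to exhibit explicit renormalisation moves: applying $\left(\begin{smallmatrix} 1 & \pm 1\\ 0 & 1\end{smallmatrix}\right)$ or $\left(\begin{smallmatrix} 1 & 0\\ \pm 1 & 1\end{smallmatrix}\right)$ to $L(m,n)$ and re-cutting along the new periodic directions produces another $L(m',n')$ with $m'+n'-1=d$, and then to check, by a descent on $(m,n)$ reminiscent of the Euclidean algorithm, that these moves act transitively on the admissible pairs of each parity type (and that for $d=3$ the only admissible pair is $(2,2)$). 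I expect this last step to be the main obstacle: it is the genuinely combinatorial heart of the argument, where the dichotomy ``one orbit'' versus ``two orbits'' actually emerges, and it must be organised so that the moves never leave the set of $L$-prototypes and the descent terminates. Putting the steps together, there is at most one orbit per parity class; hence at most one orbit when $d$ is even or $d=3$ and at most two when $d$ is odd. Together with the lower bound from the Weierstra\ss{} invariant, this gives exactly one orbit for even $d$ and for $d=3$, and exactly the two orbits $\mathcal{A}_d$ and $\mathcal{B}_d$, distinguished by their number of integral Weierstra\ss{} points, for odd $d\ge 5$, each generated by a suitable $L(m,n)$.
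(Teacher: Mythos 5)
The paper offers no proof of this statement at all: it is quoted as Theorem (Hubert/Leli\`evre) with a citation to \cite{HL06} (completed for non-prime $d$ by McMullen \cite{McM05}), so there is no internal argument to compare against. Your sketch does follow the route those authors actually take --- reduction to two-cylinder prototypes, invariance of the number of integral Weierstra\ss{} points, and a renormalisation/descent argument for transitivity --- so the architecture is right.

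The problem is that the one step you defer, the transitivity of the renormalisation moves on the admissible $L$-shaped prototypes within each parity class, is not a technical afterthought but the entire content of the theorem: everything before it only shows that the number of orbits is \emph{at least} one (resp.\ two), and without the descent you have no upper bound. You explicitly flag it as ``the main obstacle'' and do not carry it out, so the proof is not complete. A second, smaller gap: the reduction of an arbitrary primitive surface to an $L(m,n)$ is more delicate than a twist-normalisation. A general two-cylinder square-tiled surface in $\Omega\Mtwo(2)$ carries widths $w_1>w_2$, heights $h_1,h_2$ and twists; reaching an $L(m,n)$ requires reducing to $h_1=h_2=1$ and $w_2=1$, which is itself a Euclidean-type descent (and one must also rule out that a surface admits \emph{only} one-cylinder periodic directions). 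Both descents interact --- the moves $\bigl(\begin{smallmatrix}1&\pm1\\0&1\end{smallmatrix}\bigr)$, $\bigl(\begin{smallmatrix}1&0\\\pm1&1\end{smallmatrix}\bigr)$ generally take an $L(m,n)$ out of the $L$-prototype family before returning to it --- so the claim that they ``never leave the set of $L$-prototypes'' is not accurate as stated and must be replaced by a descent on the larger set of two-cylinder prototypes, as in \cite{HL06} and \cite{McM05}. As it stands the proposal is an accurate road map of the published proof rather than a proof.
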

\begin{rem}
  More generally, Teichmüller curves in $\Omega \Mtwo(2)$ have been completely classified by McMullen in \cite{McM05}
  by their spin structure and their discriminant.
\end{rem}
We embed the translation surface $X \in \Mtwo(2)$ generating the Teichm\"uller curve in its Jacobian in the following way: we choose an arbitrary Weierstra\ss \ point $z \in X$ and define
$$\varphi_z: X \to \Jac(X), \quad x \mapsto [x-z].$$

\paragraph{Theta functions and theta characteristic.} The $2$-torsion points on an elliptic curve are in natural correspondence with theta characteristics \cite[Corollary~VI.1.5]{FK92}. Moreover the Weiterstra\ss \ points on a
curve of genus~$2$ correspond to the odd theta characteristics (see e.g. \cite[Chapter~VII.1]{FK92}). We will therefore 
quickly introduce this concept, but restrict here to dimension~2: 
Let $\HH_2$ denote the Siegel upper half space of genus~$2$. we define for $\epsilon,\epsilon' \in \left\{ 0, 1 \right\}^{2}$ the \textbf{theta function with theta 
characteristic $(\epsilon,\epsilon')$} on $\CC^2 \times \HH_2$ by
$$\theta \begin{bmatrix} \epsilon \\ \epsilon' \end{bmatrix} (u,Z) := \sum_{x \in \ZZ^2 + \epsilon/2} \exp \left( 2 \pi i \left( 
\frac{1}{2} xZ x^T + x \left( u + \frac{\epsilon'}{2} \right) \right) \right).$$ 
The theta characteristic is called \textbf{odd} if $\epsilon(\epsilon')^T$ is odd and \textbf{even} otherwise. 
Accordingly we denote the zero-locus of the theta function by
$$\Theta := \left\{ z \in \CC^2 \ | \ \theta \begin{bmatrix} \epsilon \\ \epsilon' \end{bmatrix}(z,\Pi) = 0 \right\}$$
where $\Pi$ is a fixed element in $\HH_2$. We leave
away $\Pi$ in the definition of $\Theta$ as it is clear from the context which matrix is meant. With these conventions $\varphi_z(X) = \Theta$ holds if $[z] = \frac{1}{2}(\Id \epsilon' + \Pi \epsilon)$ (compare \cite[Chapter~VII.1.2]{FK92}). The relation of theta functions and Teichmüller curves is discussed in detail in \cite{MZ16}.

\paragraph{Spin structure of the complementary elliptic curve.} By counting the number of integral Weierstra\ss \ points also the 
complementary elliptic curve may be given a spin structure in a canonical way if $d>3$ is odd. We fix
$E_1$ by choosing the ramification point of the covering as $p=(0,0)$. By construction, also the complementary elliptic 
curve $E_2$ is then fixed (it is the symplectic orthogonal complement of $E_1$ inside the Jacobian). The 
Weierstra\ss \ points of the square-tiled surface $\pi : L_d \to E_1$ are preimages of the $2$-torsion points on $E_1$ and 
$1$ or $3$ of them are integral, i.e. lie over $p$.\\[12pt]
Since $\Jac(X) \to E_1 \times E_2$ is an isogeny of degree $d^2$ where $d^2$ is odd all Weierstra\ss \ points of 
$X$ are $2$-torsion points of $E_1 \times E_2$. There is a one-to-one correspondence between 
the Weierstra\ss \ points of $X$ and odd theta characteristics. The latter are
$$\begin{bmatrix} 0 & 1 \\ 0 & 1 \end{bmatrix} \quad \begin{bmatrix} 0 & 1 \\ 1 & 1 \end{bmatrix} \quad \begin{bmatrix} 1 & 1 \\ 0 & 1 \end{bmatrix} \quad 
\begin{bmatrix} 1 & 0 \\ 1 & 0 \end{bmatrix} \quad \begin{bmatrix} 1 & 0 \\ 1 & 1 \end{bmatrix} \quad \begin{bmatrix} 1 & 1 \\ 1 & 0 \end{bmatrix}.$$
More precisely, if we use the fact that $\Jac(X)$ is isogenous to $E_1 \times E_2$ of odd degree $d^2$, then by renormalizing the odd theta characteristics by an odd translation the correspondence can be made as follows (compare \cite[Chapter~VI.3]{FK92}): 
the first column of the theta characteristic divided by~$2$ corresponds to the coordinates of the projection of the Weierstra\ss \ points of $L_d$ to $E_1$ and the second column of the theta characteristic divided by~$2$ corresponds to the coordinates of the  projection of the Weierstra\ss \ points of $L_d$ to $E_2$.

\begin{lem} \label{lem:translation} If the odd theta characteristics are translated by an odd theta
characteristic 
\begin{itemize}
 \item with first column $\begin{bsmallmatrix} 1 \\ 1 \end{bsmallmatrix}$ then there is one translated characteristic with second column $\begin{bsmallmatrix} 0 \\ 0 \end{bsmallmatrix}$.
 \item with first column $\begin{bsmallmatrix} 1 \\ 0 \end{bsmallmatrix}$ or  $\begin{bsmallmatrix} 0 \\ 1 \end{bsmallmatrix}$ or  $\begin{bsmallmatrix} 0 \\ 0 \end{bsmallmatrix}$ then
 there are three translated characteristics with second column $\begin{bsmallmatrix} 0 \\ 0 \end{bsmallmatrix}$.
\end{itemize}
\end{lem}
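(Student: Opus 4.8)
The plan is to reduce the lemma to an elementary count in the group $(\ZZ/2\ZZ)^4$ of $2$-torsion points. First I would fix the convention under which the six matrices displayed just above the lemma are exactly the odd theta characteristics: a characteristic $\begin{bmatrix}\epsilon\\\epsilon'\end{bmatrix}$ is recorded as the $2\times 2$ matrix with first row $\epsilon$ and second row $\epsilon'$, so that its \emph{first column} carries the pair $(\epsilon_1,\epsilon_1')$ (the $E_1$-coordinates of the corresponding Weierstra\ss\ point) and its \emph{second column} carries $(\epsilon_2,\epsilon_2')$ (the $E_2$-coordinates), and the characteristic is odd precisely when $\epsilon_1\epsilon_1'+\epsilon_2\epsilon_2'=1$ in $\ZZ/2\ZZ$. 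Since the theta characteristics form a torsor under $\tfrac12\ZZ^4/\ZZ^4\cong(\ZZ/2\ZZ)^4$ and translating by an odd theta characteristic $\delta$ means adding the $2$-torsion vector it represents, such a translation is just entrywise addition modulo $2$ of the $2\times 2$ matrices. Writing $c_1(M)$ and $c_2(M)$ for the two columns of a characteristic $M$, the second column of a translate $M+\delta$ equals $\begin{bmatrix}0\\0\end{bmatrix}$ if and only if $c_2(M)=c_2(\delta)$. Thus the lemma becomes: count, among the six odd characteristics $M$, how many satisfy $c_2(M)=c_2(\delta)$.

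The next step is a structural remark. For a matrix written as above, the product of the two entries in a given column equals $1$ exactly when that column is $\begin{bmatrix}1\\1\end{bmatrix}$; hence $M$ is odd if and only if exactly one of its two columns equals $\begin{bmatrix}1\\1\end{bmatrix}$. Consequently the six odd characteristics split into two families of three: those with $c_1(M)=\begin{bmatrix}1\\1\end{bmatrix}$, whose second columns run exactly once through the three vectors different from $\begin{bmatrix}1\\1\end{bmatrix}$, and those with $c_2(M)=\begin{bmatrix}1\\1\end{bmatrix}$, whose first columns do the same. Therefore the number of odd $M$ with a prescribed second column $v$ is $3$ if $v=\begin{bmatrix}1\\1\end{bmatrix}$ and $1$ if $v\neq\begin{bmatrix}1\\1\end{bmatrix}$.

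It remains to distinguish cases according to the first column of $\delta$. If $c_1(\delta)=\begin{bmatrix}1\\1\end{bmatrix}$, then $\delta$ being odd forces $c_2(\delta)$ to be one of the three vectors different from $\begin{bmatrix}1\\1\end{bmatrix}$, so by the previous count exactly one translate has second column $\begin{bmatrix}0\\0\end{bmatrix}$. If instead $c_1(\delta)$ is $\begin{bmatrix}1\\0\end{bmatrix}$, $\begin{bmatrix}0\\1\end{bmatrix}$ or $\begin{bmatrix}0\\0\end{bmatrix}$, then oddness forces $c_2(\delta)=\begin{bmatrix}1\\1\end{bmatrix}$, and hence exactly three translates have second column $\begin{bmatrix}0\\0\end{bmatrix}$. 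Both bullet points follow; note in passing that the outcome depends only on $c_2(\delta)$, which is why in the first case the three admissible choices of $\delta$ still give a single well-defined number.

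I do not anticipate a genuine obstacle here: the argument is a finite verification. The one point that needs care is the bookkeeping — fixing the row/column convention for theta characteristics consistently with the surrounding text, checking that the six matrices in the excerpt are indeed the odd ones, and confirming that ``integral Weierstra\ss\ point over $E_i$'' translates into ``the $i$-th column of the characteristic vanishes'', so that the count carried out above is literally the quantity asked for in the statement.
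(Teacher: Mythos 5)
Your argument is correct and is exactly the finite verification the paper has in mind (the paper dismisses it as "an immediate calculation"); your organizing observation that a characteristic is odd iff exactly one of its two columns equals $\begin{bmatrix}1\\1\end{bmatrix}$ makes the count transparent and matches the displayed list of six odd characteristics. No gaps.
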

\begin{proof} This is an immediate calculation. \end{proof}

\begin{thm} \label{thm:weierstrass} Let $\pi_1 : L_d \to E_1$ be a square-tiled surface in $\Omega \Mtwo(2)$ 
and let $E_2$ denote the complementary elliptic curve. If $\pi_1 : L_d \to  E_1$ has even spin, then $\pi_2 : L_d \to  E_2$ has odd spin. If $\pi_1 : L_d \to E_1$ has odd spin, then $\pi_2 : L_d \to E_2$ has even spin.
\end{thm}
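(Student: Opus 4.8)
The plan is to rephrase the statement entirely in terms of the $2\times 2$ binary matrices encoding theta characteristics and then to read off the answer from the displayed list of the six odd ones. Since the spin structure of a covering — and a fortiori that of the complementary covering — is only defined for odd $d>3$, I would first reduce to the case $d\geq 5$ odd; then $\Jac(L_d)\to E_1\times E_2$ is an isogeny of odd degree $d$ and hence an isomorphism on $2$-torsion, so that every Weierstra\ss\ point of $L_d$ has a well-defined image in $E_1[2]\times E_2[2]$. By the correspondence recalled just above, after normalizing by a fixed odd theta characteristic $\tau$ with columns $v_1,v_2\in\{0,1\}^{2\times 1}$, the six Weierstra\ss\ points are matched with the six translated characteristics $\delta_i+\tau$ ($i=1,\dots,6$), where $\delta_1,\dots,\delta_6$ is the displayed list of odd theta characteristics, and the projection of the $i$-th Weierstra\ss\ point to $E_k$ is half of the $k$-th column of $\delta_i+\tau$. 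Since the $k$-th column of $\delta_i+\tau$ is obtained from that of $\delta_i$ by adding $v_k$, a Weierstra\ss\ point is integral for $\pi_1$ (respectively for $\pi_2$) exactly when the first (respectively the second) column of $\delta_i$ equals $v_1$ (respectively $v_2$); and the spin of $\pi_k$ is even or odd according as the number of such $i$ is $1$ or $3$.

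The next step is an elementary count on the two lists of columns. Reading off the displayed six odd theta characteristics, the multiset of their first columns is $\left\{\begin{bmatrix}0\\0\end{bmatrix},\begin{bmatrix}0\\1\end{bmatrix},\begin{bmatrix}1\\0\end{bmatrix},\begin{bmatrix}1\\1\end{bmatrix},\begin{bmatrix}1\\1\end{bmatrix},\begin{bmatrix}1\\1\end{bmatrix}\right\}$, and since the list is stable under interchanging the two columns, the multiset of their second columns is the same. Hence the number of indices $i$ with first column equal to $v_1$ is $3$ if $v_1=\begin{bmatrix}1\\1\end{bmatrix}$ and $1$ otherwise, and symmetrically for $v_2$ in place of the second columns; this is exactly Lemma~\ref{lem:translation} together with its column-swapped counterpart. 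Now I would invoke that $\tau$ is \emph{odd}: writing $q$ for the map sending a column $\begin{bmatrix}a\\b\end{bmatrix}$ to $ab\in\{0,1\}$, oddness of $\tau$ means that $q(v_1)+q(v_2)$ is odd, that is, exactly one of $v_1,v_2$ equals $\begin{bmatrix}1\\1\end{bmatrix}$.

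Putting the pieces together, I would conclude that exactly one of the two counts above equals $3$ and the other equals $1$: if $v_1=\begin{bmatrix}1\\1\end{bmatrix}$ then $\pi_1$ has odd spin and $\pi_2$ has even spin, whereas if $v_2=\begin{bmatrix}1\\1\end{bmatrix}$ the roles are exchanged — in either case the two spin structures are opposite, which is the assertion. The only genuinely delicate point is not this last bookkeeping but the input it rests on: one must know that the number of integral Weierstra\ss\ points of the \emph{complementary} covering $\pi_2$ is governed by the second columns of the very same normalized characteristics whose first columns govern $\pi_1$, i.e.\ that one single odd translation $\tau$ normalizes both projections simultaneously — this is the bridge between the flat-covering notion of integrality and the theta-characteristic formalism of \cite[Chapter~VI.3]{FK92}, and it is where the oddness of $d$ (allowing one to identify the $2$-torsion of $\Jac(L_d)$ with that of $E_1\times E_2$) really enters. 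Once that symmetric normalization is in place, the oddness of $\tau$ forces the two column-multiplicities to be $\{1,3\}$, and the theorem follows.
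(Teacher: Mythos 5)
Your proposal is correct and follows essentially the same route as the paper: both arguments reduce the claim to the list of six odd theta characteristics, match integrality of a Weierstra\ss\ point for $\pi_k$ with the $k$-th column of the normalized characteristic being $\begin{bmatrix}0\\0\end{bmatrix}$, and then count column multiplicities after translation by the normalizing odd characteristic $\tau$ (the paper packages this count as Lemma~\ref{lem:translation}). Your version merely makes explicit the step the paper leaves implicit inside that lemma, namely that oddness of $\tau$ forces exactly one of its two columns to equal $\begin{bmatrix}1\\1\end{bmatrix}$, which is what makes the two spin parities opposite.
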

\begin{proof}
If $\pi_1 : L_d \to E_1$ has three $3$ integral Weiterstra\ss \ points, we have to renormalize the odd theta characteristics 
by adding a odd theta characteristic with first column $\begin{bsmallmatrix} 1 \\ 1 \end{bsmallmatrix}$. Thus $\pi_2 : L_d \to E_2$ has exactly $1$ integral Weierstra\ss \ point by Lemma~\ref{lem:translation}. If $\pi_1 : L_d \to E_1$ has one integral Weierstra\ss \ point then 
we have to renormalize the odd theta characteristics by adding an odd characteristic 
with first column $\neq \begin{bsmallmatrix} 1 \\ 1 \end{bsmallmatrix}$ to each element of the list (although we do, of course, not know precisely which one). Consequently, $\pi_2 : L_d \to E_2$ has three integral Weierstra\ss \ points again by Lemma~\ref{lem:translation}. 
\end{proof}

\paragraph{Calculating elements of the homology Veech group.} \label{sec3:calculating:homologyVG} Having described how the Veech group acts on homology and how this action yields matrices in $\SL_2(\OD)$, we now calculate some explicit elements of the homology Veech group of $L(m,n)$ with $m+n-1=d$. We denote its Veech group by $\SL(L(m,n))$ and accordingly its homology Veech group by $\SL^1(L(m,n))$. Let $a,b$ be the symplectic basis of $H_1(E_1,\ZZ)$ indicated in Figure~1 and consider its pullback to $H_1(L(m,n),\ZZ)$.
Note that the classes $a_1,\ldots,a_{m-1}$ and the classes $b_1,\ldots,b_{n-1}$ respectively each define only a single class in $H_1(L(m,n),\ZZ)$.
Therefore the pullback of the symplectic basis of $H_1(E_i,\ZZ)$ yields the elements
$$c_1=(m-1)a_1+a_m \ \ \textrm{and} \ \ d_1=(n-1)b_1+b_n.$$
of $H_1(L(m,n),\ZZ)$. For their symplectic pairing we have
$$(c_1,d_1)=(m-1)(n-1)\cdot 0+(m-1) \cdot 1+(n-1)\cdot 1+1=d.$$
The set $c_1,d_1$ may be extended to a symplectic basis of $\Lambda$ by choosing $c_2:=na_1-a_m$ and $d_2:=-mb_1+b_n$ as
$$(c_1,c_2)=(n-1) + (-n) + 1 = 0 \ \ \textrm{and} \ \ (d_1,d_2) = (m-1) + (-m) + 1 = 0$$
and
$$(c_2,d_2)=n+m-1=d.$$
Before we start with the calculation of some elements of the homology Veech group, let us fix the notation for three special matrices in $\SL_2(\ZZ)$ first, namely
$$T':=\begin{pmatrix} 1 & 1 \\ 0 & 1 \end{pmatrix}, \ Z' := \begin{pmatrix} 1 & 0 \\ 1 & 1 \end{pmatrix} \ \ \textrm{and} \ \ S' = \begin{pmatrix} 0 & -1 \\ 1 & 0 \end{pmatrix}.$$ Recall that the Veech group of $L(m,n)$ always contains the element $T'^n$ (Proposition~\ref{sec3:prop:veech}). This matrix acts on the $a_i$ and $b_i$ by
\begin{align*}
a_i & \mapsto a_i \ \ i=1,\ldots,m \\
b_1 & \mapsto b_1 + a_m \\
b_n & \mapsto b_n + a_m + n(m-1)a_1
\end{align*}
since the matrix $T'^n$ yields a single Dehn-twist on the lower cylinder and a $n$-fold Dehn-twist on the upper cylinder and 
since the $a_i$ are parallel to the twist direction. Therefore
$$c_2 \mapsto c_2 \ \ \textrm{and} \ \ d_2 \mapsto (m-1)c_2 + d_2.$$
The action of the Veech group on the homology of $L(m,n)$ hence yields the element $T:=(T'^n,T'^{n-d}) \in \SL_2(\OD)$. Analogously, 
$Z:=(Z'^m,Z'^{m-d}) \in \SL_2(\OD)$ is an element of the homology Veech group by \eqref{sec3:eq2}.\\
Further explicit elements of the Veech group of $L(m,n)$ can be gained by using cylinder decomposition in certain directions and applying \eqref{sec3:eq1}. Consider the cylinder decomposition of $L(m,n)$ in direction $(1,1)$. Only one cylinder occurs here with circumference $m+n-1=d$ and height $1$. Thus their modulus is $\frac{1}{d}$. Proposition~\ref{sec3:prop:veech} and \eqref{sec3:eq1} yield that
\begin{align*}
E' & = \begin{pmatrix} 1 & 0 \\ 1 & 1 \end{pmatrix} \begin{pmatrix} 1 & d \\ 0 & 1 \end{pmatrix} \begin{pmatrix} 1 & 0 \\ 1 & 1 \end{pmatrix}^{-1}\\
& = \begin{pmatrix} 1 - d & d\\-d & 1 + d \end{pmatrix}.
\end{align*}
is an element of the Veech group of $L(m,n)$. Analyzing the action of $E'$ on the homology basis $a_i, b_i$ we get 
\begin{align*}
a_1 & \mapsto ma_1 + a_m + (n-1)b_1 + b_n\\
a_m & \mapsto (m-1)na_1 + (n+1)a_m + (n-1)nb_1 + nb_n\\
b_1 & \mapsto -((m-1)a_1 + a_m + (n-2)b_1 +b_n)\\
b_n & \mapsto -((m-1)ma_1+ma_m + (n-1)mb_1 + (m-1)b_m).
\end{align*}
This implies that the homology Veech group contains the matrix
$$E:=(E',\Id) = \begin{pmatrix} 1 - (d-w) & d-w\\ -(d-w) & 1 +(d-w) \end{pmatrix}.$$
If $n=3$ and $m$ is odd, another element in the homology Veech group can be similarly computed from the cylinder decomposition in direction $(2/m,1)$, namely
\begin{eqnarray*}
F & := &\begin{pmatrix} (1-2(d-2),1+(4-d)) & (4,4-d) \\  (-(d-2)^2,-(4-d)) & (1+2(d-2),1-(4-d)) \end{pmatrix} \\
& = & \begin{pmatrix} 1-(2(d-2)-w) & 4 - w\\ -(d-2)^2+(d-3)w & 1+(2(d-2)+w) \end{pmatrix}
\end{eqnarray*}
Finally, let us specify to the case $n=2$. Then the cylinder decomposition in direction $(2/m,1)$ yields the following elements
in the homology Veech group:
\begin{itemize}
\item If $m \equiv 2 \mod 4$: 
$$F:=\begin{pmatrix} (1-3/2m,2-m/2) & (3,-(m-2)) \\  (-3/4m^2,(m-2)/4) & (1+3/2m,m/2) \end{pmatrix} = \begin{pmatrix} * & 3 - w\\ * & * \end{pmatrix}$$
\item If $m \equiv 0\ \mod 4$: 
$$F:=\begin{pmatrix} (1-3m,3-m) & (6,-2(m-2)) \\  (-3/2m^2,(m-2)/2) & (1+3m,m-1) \end{pmatrix} = \begin{pmatrix} * & 2(3 - w)\\ * & * \end{pmatrix}$$
\item If $m \equiv 1 \mod 2$:
$$F:=\begin{pmatrix} (1-6m,5-2m) & (12,-4(m-2)) \\  (-3m^2,m-2) & (1+6m,2m-3) \end{pmatrix} = \begin{pmatrix} * & 4(3 - w)\\ * & * \end{pmatrix}$$
\end{itemize}

\section{Proof of the main result.}

In this section, we calculate the index of $\Gamma^D_0(\mathfrak{a}) \cap \SL^1(L_d)$ in $\SL^1(L_d)$ for an arbitrary 
ideal $\mathfrak{a} \subset \OD$ and thereby show our main theorem.
\begin{thm} \label{thm:main_thm}
  Let $L_d$ be a square-tiled surface in $\Omega\Mtwo(2)$ with $d$ squares and let $\SL^1(L_d)$ be its homology Veech group. We distinguish the two different cases that $L_d$ is in the orbit $\mathcal{A}_d$ and $\mathcal{B}_d$ in the classification of square-tiled surfaces in $\Omega \Mtwo(2)$. Moreover if $d$ is odd let $2=\mathfrak{p}_2\mathfrak{p}_2^\sigma$ 
  be the decomposition of~$2$ into prime ideals, where $\mathfrak{p}_2$ is the distinguished prime ideal that is a common divisor of~$2$
   and $2-w$ in $\mathcal{O}_{d^2}$. 
  \begin{itemize}
   \item[(1A)] If $d$ is odd and $L_d$ is in $\mathcal{A}_d$, or $d=3$, then
   $[\SL^1(L_d):\SL^1(L_d) \cap \Gamma_0(\mathfrak{a})] = [\SL_2(\mathcal{O}_{d^2}):\Gamma_0(\mathfrak{a})]$ for all ideals $\mathfrak{a} \subset \mathcal{O}_{d^2}$ with
   $\gcd(\mathfrak{p}_2,\mathfrak{a})=1$.
   \item[(1B)] If $d$ is odd and $L_d$ is in $\mathcal{A}_d$, or $d=3$, then $[\SL^1(L_d):\SL^1(L_d) \cap \Gamma_0(\mathfrak{a})] = \frac{2}{3}[\SL_2(\mathcal{O}_{d^2}):\Gamma_0(\mathfrak{a})]$
   for all ideals with $\mathfrak{p}_2|\mathfrak{a}$.
   \item[(2A)] If $d$ is odd and $L_d$ is in $\mathcal{B}_d$, then we have
   $[\SL^1(L_d):\SL^1(L_d) \cap \Gamma_0(\mathfrak{a})] = [\SL_2(\mathcal{O}_{d^2}):\Gamma_0(\mathfrak{a})]$ for all ideals $\mathfrak{a} \subset \mathcal{O}_{d^2}$ with
   $\gcd(\mathfrak{p}_2^\sigma,\mathfrak{a})=1$.
   \item[(2B)] If $d$ is odd and $L_d$ is in $\mathcal{B}_d$, then we have $[\SL^1(L_d):\SL^1(L_d) \cap \Gamma_0(\mathfrak{a})] = \frac{2}{3}[\SL_2(\mathcal{O}_{d^2}):\Gamma_0(\mathfrak{a})]$
   for all ideals with $\mathfrak{p}_2^\sigma|\mathfrak{a}$.
   \item[(3A)] If $d$ is even, then 
   $[\SL^1(L_d):\SL^1(L_d) \cap \Gamma_0(\mathfrak{a})] = [\SL_2(\mathcal{O}_{d^2}):\Gamma_0(\mathfrak{a})]$ for all ideals $\mathfrak{a} \subset \mathcal{O}_{d^2}$ with
   $2 \nmid \N(\mathfrak{a})$.
   \item[(3B)] If $d$ is even, then $[\SL^1(L_d):\SL^1(L_d) \cap \Gamma_0(\mathfrak{a})] = \frac{2}{3}[\SL_2(\mathcal{O}_{d^2}):\Gamma_0(\mathfrak{a})]$
   for all ideals with $2|\N(\mathfrak{a})$.
   \end{itemize}
\end{thm}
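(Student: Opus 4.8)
The plan is to translate the index into the length of an orbit on a projective line and then to compute that orbit prime by prime, using Corollary~\ref{cor:Nori} at each prime together with the explicitly computed elements $T,Z,E,F$ of $\SL^1(L_d)$ from Section~\ref{cha:5}. Since $\Gamma_0(\mathfrak{a})$ is the stabiliser of $[1:0]\in P^1(\mathcal{O}_{d^2}/\mathfrak{a})$ and $[\SL_2(\mathcal{O}_{d^2}):\Gamma_0(\mathfrak{a})]=\#P^1(\mathcal{O}_{d^2}/\mathfrak{a})$ by Lemma~\ref{lem:structure_of_congruence_groups}, the index $[\SL^1(L_d):\SL^1(L_d)\cap\Gamma_0(\mathfrak{a})]$ equals the length of the orbit of $[1:0]$ under the image $\overline{\SL^1(L_d)}$ of the homology Veech group in $\SL_2(\mathcal{O}_{d^2}/\mathfrak{a})$. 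The $(A)$ statements thus assert transitivity of this action, the $(B)$ statements that the orbit has $\tfrac23$ of the maximal length. By the Chinese Remainder Theorem $\mathcal{O}_{d^2}/\mathfrak{a}$ is a product of the local rings $\mathcal{O}_{d^2}/\mathfrak{q}^{e}$ over the primary components of $\mathfrak{a}$, so $P^1(\mathcal{O}_{d^2}/\mathfrak{a})$ is a product of projective lines; a Goursat-type argument (factors over distinct rational primes share no non-trivial common quotient, and conjugate primes over the same rational prime are separated by the element $E=(E',\Id)$, as explained below) reduces the theorem to computing, for each prime power $\mathfrak{q}^{e}\mid\mathfrak{a}$, the local orbit of $[1:0]$ in $P^1(\mathcal{O}_{d^2}/\mathfrak{q}^{e})$.

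At a prime $\mathfrak{q}$ of \emph{odd} norm $p$ the residue field is $\mathbb{F}_p$ and the reductions of $T,Z,E,F$ lie in $\overline{\SL^1(L_d)}$. A short case analysis on the divisibility of $n$, $m$, $d$ by $p$ --- invoking $E$ or $F$ whenever $T$, $Z$ degenerate --- exhibits among these reductions two parabolic elements with distinct fixed points, so Corollary~\ref{cor:Nori} forces the reduction modulo $\mathfrak{q}$ to have $p^{3}-p$ elements, i.e.\ to be all of $\SL_2(\mathbb{F}_p)$. Passing from $\mathfrak{q}$ to $\mathfrak{q}^{e}$ is the standard lifting argument: the layers $\Gamma(\mathfrak{q}^{j})/\Gamma(\mathfrak{q}^{j+1})$ are elementary abelian and are hit by suitable powers of $T$, $Z$, $E$, $F$, whence $\overline{\SL^1(L_d)}=\SL_2(\mathcal{O}_{d^2}/\mathfrak{q}^{e})$; the small cases $p=3$, $p\mid d$ and $d=3$ require only the extra care of keeping one non-trivial unipotent in every layer. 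When $\mathfrak{a}$ is divisible by conjugate primes $\mathfrak{q}$, $\mathfrak{q}^{\sigma}$ over the same odd $p\nmid d$ one uses in addition that $E=(E',\Id)$ is a non-trivial parabolic in the first coordinate and trivial in the second: its normal closure inside $\SL^1(L_d)$ reduces to $\SL_2(\mathcal{O}_{d^2}/\mathfrak{q}^{a})\times\{\Id\}$, which together with surjectivity onto the second factor yields the full product. Hence at every odd prime-power divisor of $\mathfrak{a}$ the local orbit of $[1:0]$ is all of $P^1$.

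The crux is the prime(s) above $2$. For \emph{odd} $d$ one has $2=\mathfrak{p}_2\mathfrak{p}_2^{\sigma}$ with $\mathcal{O}_{d^2}/\mathfrak{p}_2^{e}\cong\ZZ/2^{e}$; reduction modulo $\mathfrak{p}_2^{e}$ picks out the first coordinate, hence is the action of the ordinary Veech group of $\pi_1$ on $H_1(E_1;\ZZ/2^{e})$, while reduction modulo $\mathfrak{p}_2^{\sigma e}$ picks out the second coordinate, hence (after conjugation by $R=\diag(-1,1)$) the action of the complementary covering $\pi_2:L_d\to E_2$ on $H_1(E_2;\ZZ/2^{e})$. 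From the explicit generators this is already transparent: depending on the parities of $n$, $m$ (respectively $n-d$, $m-d$) either $T$, $Z$ reduce modulo the prime to two parabolics with distinct fixed points --- forcing $\SL_2(\mathbb{F}_2)$, since by Corollary~\ref{cor:Nori} a proper unipotent-generated subgroup has at most two elements --- or both become trivial and only $E$, respectively $F$, survives, forcing an orbit of length $2$; and by Theorem~\ref{cor:classification} the parity pattern is exactly the $\mathcal{A}_d$/$\mathcal{B}_d$ dichotomy. Corollary~\ref{cor:schmitt} applied to $\pi_1$, and --- via Theorem~\ref{thm:weierstrass}, which produces the opposite spin --- to the complementary covering $\pi_2$, shows that exactly one of $\mathfrak{p}_2$, $\mathfrak{p}_2^{\sigma}$ is exceptional: it is $\mathfrak{p}_2$ when $L_d\in\mathcal{A}_d$ (even spin of $\pi_1$) and $\mathfrak{p}_2^{\sigma}$ when $L_d\in\mathcal{B}_d$ (even spin of $\pi_2$), the local orbit there having length $2^{e}=\tfrac23\#P^1(\mathcal{O}_{d^2}/\mathfrak{p}_2^{e})$ while at the other prime it is maximal. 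For \emph{even} $d$ the rational prime $2$ is irreducible but not prime (Theorem~\ref{thm:ramification_of_primes}), there is a single prime $\mathfrak{q}_2=[2;w]$ of norm $2$ above it, the two coordinates coincide modulo $2$, and reduction modulo $\mathfrak{q}_2$ is the action of the Veech group of $\pi_1$ on $H_1(E_1;\mathbb{F}_2)$; Corollary~\ref{cor:schmitt} in the even-$d$ case (where there is no spin, by Kani) gives an orbit of length $2$, and the residual work is only the by-hand computation at the higher powers $\mathfrak{q}_2^{e}$ and at the ideal $(2)$. Reassembling the local data through the product decomposition gives the asserted global indices.

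I expect this last paragraph to be the main obstacle. The delicate points are: making rigorous the identification of the coordinate-wise reductions of $\SL^1(L_d)$ at the primes above $2$ with the Veech groups of $\pi_1$ and of the complementary covering $\pi_2$ --- in particular that the reduction modulo $\mathfrak{p}_2^{\sigma}$ is genuinely governed by $\pi_2$, so that Theorem~\ref{thm:weierstrass} turns Weitze-Schmith\"usen's asymmetric statement into the symmetric one above; and, for even $d$, carrying out the orbit computations for the non-invertible ideals $\mathfrak{q}_2^{e}$ and $(2)$, whose quotient rings are not of the form $\ZZ/2^{e}$ and for which the cardinalities $\#P^1$ must be taken from the formulas of Section~\ref{cha:special_linear_group}. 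The odd-prime analysis, by contrast, is routine once Corollary~\ref{cor:Nori} and the explicit elements are available.
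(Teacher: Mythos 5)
Your overall architecture (index $=$ length of the orbit of $[1:0]$ on $P^1(\OD/\mathfrak{a})$, Chinese Remainder plus a Goursat argument, Nori at each odd prime, Weitze-Schmith\"usen at $2$) is a legitimate reorganization of the proof, and the odd-prime part would go through. The paper proceeds differently: it exhibits explicit coset representatives as words in $T,Z,E,F$ and computes the relative index $[\SL^1(L_d)\cap\Gamma^D_0(h\mathfrak{p}^k):\SL^1(L_d)\cap\Gamma^D_0(h\mathfrak{p}^{k+1})]$ one prime power at a time. That bookkeeping automatically disposes of the product/Goursat issue and of the lifting to prime powers which you declare ``standard'' --- and which is genuinely not standard at $p=2,3$ nor at the irreducible non-prime divisors $p\mid d$, where $\OD/p^e\OD$ does not split into residue fields and the paper has to import the divisibility $p+1\mid q$ from Corollary~\ref{cor:schmitt} to upgrade the explicit lower bound $p^2$ to the answer $p(p+1)$. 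Nori's theorem is used in the paper only once, for the divisors of $\eta^+$; you use it systematically, which is fine but shifts all the real work into your ``lifting'' and ``Goursat'' steps, where the explicit matrices are needed anyway.

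The genuine gap is exactly where you predicted it: the prime $\mathfrak{p}_2^\sigma$ (resp.\ $\mathfrak{p}_2$) governing the second coordinate. Your plan is to ``apply Corollary~\ref{cor:schmitt} to the complementary covering $\pi_2$'', i.e.\ to identify the reduction of the second coordinate of $\SL^1(L_d)$ with the reduction of the Veech group of the square-tiled surface $\pi_2:L_d\to E_2$. But the second coordinate of the homology Veech group is the image of $\Aff^+(L_d,\omega)$ acting on $H_1(E_2,\ZZ)$, where $\omega=\pi_1^*(dz)$, whereas the Veech group of $\pi_2$ is the affine group of the \emph{other} translation structure $\pi_2^*(dz_2)$; nothing in the paper or in your sketch identifies these two subgroups of $\SL_2(\ZZ)$, and without that identification Corollary~\ref{cor:schmitt} says nothing about the second coordinate. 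The paper avoids the identification entirely: it only needs a mod-$2$ statement, and obtains the upper bound $[\SL^1(L_d):\SL^1(L_d)\cap\Gamma^D(2)]\leq\frac{1}{3}[\SL_2(\OD):\Gamma^D(2)]$ by running the permutation argument from the proof of \cite[Theorem~3]{WS12} directly on the second factor --- the affine group of $(L_d,\omega)$ permutes the six Weierstra\ss{} points and preserves the fibre over $0\in E_2$, and by Theorem~\ref{thm:weierstrass} ($\pi_2$ has exactly one integral Weierstra\ss{} point when $L_d\in\mathcal{B}_d$) one nonzero $2$-torsion point of $E_2$ is distinguished by its number of Weierstra\ss{} preimages, forcing the image in $\SL_2(\mathbb{F}_2)$ to be proper; the matching lower bound comes from the explicit matrix $F$. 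You should replace the appeal to Corollary~\ref{cor:schmitt} for $\pi_2$ by this argument (or supply the identification as a separate lemma). The same caveat applies to your even-$d$ paragraph, which you correctly leave open: the computation at the powers of the prime above $2$ and at $(2h)$ must be done by hand, as the paper does with the representatives $T^H$, $Z^H$, $Z^HT^H$.
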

It suffices to restrict the analysis to the case $L_d = L(m,n)$. The advantage of doing so is that we can make explicit use of the elements of the homology Veech group found in Section~\ref{Sec:TMC}.\\[12pt]
Moreover, it is convenient to start the analysis with $\mathfrak{a}=(h)$ for $h \in \ZZ$. As every ideal $\mathfrak{a} \subset \OD$ contains a principal ideal generated by an element $h \in \ZZ$, namely e.g. $\N(\mathfrak{a})$, the only ideals preventing us from immediately starting with some $(h)$ are the prime ideals $\mathfrak{p}$ of the form $\mathfrak{p} = [p,w]$. Because
$$[p,w]^2 =[p^2,pw] = (p)[p,w]$$
holds, it follows that
$$[p,w] \supset (p) \supset (p)[p,w] =[p,w]^2 \supset (p^2).$$
Hence, if the index of $[SL_2(\OD):\Gamma^D_0(p^k)]$ was maximal for all $k \in \NN$, also the index of $\Gamma^D_0([p,w]^k)$ would be maximal. In other words, we may indeed restrict to the case $(h)$ with $h \in \ZZ$.\\[12pt]
By Theorem~\ref{thm:ramification_of_primes} and Corollary~\ref{Sec2:cor:prime_decomposition} the principal ideal $(h)$ may be uniquely decomposed as
$$(h)= \prod_{\substack{p|h, \\ p|d}} p^{e_p} \prod_{\substack{\mathfrak{q}|h, \\ \mathfrak{q} \nmid d}} \mathfrak{q}^{f_\mathfrak{q}} \mathfrak{q}^{\sigma f_\mathfrak{q}}$$ 
with $e_i,f_i \in \NN$ where the products are taken over all prime numbers respecitvely prime ideals which satisfy the given conditions. Furthermore we set $H:=\N(h)$.\\[12pt]
We will prove Theorem~\ref{thm:main_thm} step by step: As the index of $\SL^1(L_d) \cap \Gamma^D_0(h)$ in $\SL^1(L_d)$ does not 
depend on the ordering of the divisors of $(h)$ which we choose, we may first divide out the prime number divisors of the discriminant (irreducible but not prime) and afterwards the prime divisors of prime numbers which split (compare Theorem~\ref{thm:ramification_of_primes}). Moreover we may always assume that we consider the 
divisor $\mathfrak{p}$ of $(h)$ which has the highest order in $(h)$ of all divisors $\mathfrak{p}_i$ of the given type.\\[12pt]
To prove the theorem we will proceed in the following way: We first calculate for an arbitrary prime ideal $\mathfrak{p} \subset \OD$ 
and with $\gcd((h),\mathfrak{p})=1$ the index 
$$[(\SL^1(L_d) \cap \Gamma^D_0((h)\mathfrak{p}^k):(\SL^1(L_d)\cap \Gamma^D_0((h)\mathfrak{p}^{k+1}))]$$ 
for all $k \in \mathbb{N}$ and then the index
 $$[(\SL^1(L_d) \cap \Gamma^D((h))):(\SL^1(L_d) \cap \Gamma^D_0((h)\mathfrak{p}))].$$ 
Of course, these equations prove the theorem. We will from now leave away brackets indicating ideals
since it facilitates notation. 
\begin{rem}
	For $d=3$ the claim follows from a direct calculation: recall from Example~\ref{exa:l22} that 
		$$Z:= \begin{pmatrix} 1 & 0 \\ 2-w & 1 \end{pmatrix}, \qquad S:= \begin{pmatrix} 0 & -1\\ 1 & 0 \end{pmatrix}$$
	generate $\SL^1(L_3)$. The matrices $Z$ and $S$ are inequvialent modulo $\Gamma_0^D(\mathfrak{p}_2)$ but there does not exist a matrix not being equivalent to either $Z$ or $S$ modulo $\Gamma_0^D(\mathfrak{p}_2)$. This proves (1B). As an example, how to proceed for (1A), let us consider an arbitrary prime ideal $\mathfrak{p}$ with $\gcd(\N(\mathfrak{p}),6) = 1$ or $\mathfrak{p} = \mathfrak{p}_2^\sigma$: the matrices $Z^{pi}$ for $i=1,\ldots,p$ all lie in $\SL_1(L_d) \cap \Gamma_0^D(\mathfrak{p})$ but are incongruent modulo $\Gamma_0^D(\mathfrak{p}^2)$. Furthermore, the matrices $S$ and $Z^i$ for $i=1,\ldots,p$ all lie in $\SL^1(L_d)$ but are incongruent modulo $\Gamma_0^D(\mathfrak{p})$. For the general case, similar arguments apply as for the other discriminants. The details will be explained in the following. 
\end{rem}
As a shortcut we write
$$T:= \begin{pmatrix} 1 & \eta^+ \\ 0 & 1 \end{pmatrix} \ \ \textrm{and} \ \ Z:=\begin{pmatrix} 1 & 0 \\ \eta^- & 1 \end{pmatrix},$$
where $\eta^+ = n-w$ and $\eta^- = m -w$, because these two matrices are elements of the homology Veech group $\SL^1(L(m,n))$.
\begin{rem} \label{rem:etaproduct}
  For all square-tiled surfaces $L(m.n)$ we have 
   $$\eta^*:=\eta^+\eta^- = (n-w)(m-w) = mn - w.$$
\end{rem}



\paragraph{Divisors of the discriminant.} We begin with the case which is the most special compared to \cite{Wei12} 
since prime numbers $p \in \ZZ$ with $p|d$ are irreducible but not prmie. The main difference to the proof of \cite[Theorem~5.1]{Wei12} is that we use Weitze-Schmithüsen's result at some point (Proposition~\ref{prop:divisors_of_discriminant_2}). 

\begin{prop} Let $p \in \ZZ$ be a prime number with $p|d$ and $\gcd(m,p) = 1$ and let $h \in \ZZ$ be an arbitrary element with $\gcd(h,p)$ $=1$. 
Then for all $k \in \mathbb{N}$ 
$$\left[ (\SL^1(L_d) \cap \Gamma^D_0(hp^k)) :  (\SL^1(L_d) \cap \Gamma^D_0(hp^{k+1})) \right] = \N(p) = p^2$$	
holds.

\end{prop} 

\begin{proof} 
We need to find matrix $W$ which is a word in $T$ and $Z$ such that the matrices
$$W^lZ^{hp^k j}, \quad j=1,\ldots,p, \quad l=1,\ldots,p$$
lie in $\Gamma^D_0(hp^k)$ but are all incongruent modulo $\Gamma^D_0(hp^{k+1})$. The matrix
$$W:= ZT^{hp^k}Z^{-1}$$ 
is a good a choice for it. First, it is in $\Gamma^D_0(hp^{k})$. Secondly
$$W^yZ^{hp^k j} \equiv W^lZ^{hp^k i}$$
is equivalent to $W^yZ^{hp^k (j-i)} W^{-l} \in \Gamma^D_0(hp^{k+1})$.
Setting $x:=j-i$ we thus check when
\begin{eqnarray*} (W^yZ^{hp^k x} W^{-l})_{2,1} & = & \underbrace{p^{3k}h^3\eta^{-3}\eta^{+2}\cdot ylx}_{v_1}\\ & + & \underbrace{p^{2k}h^2\eta^{-2}\eta^+\cdot (y+l)x}_{v_2}\\ & + & \underbrace{p^kh\eta^-(x+\eta^-\eta^+\cdot(l-y))}_{v_3} 
\end{eqnarray*}
is divisible by $hp^{k+1}$. We already know that $hp^{k+1} | v_1+v_2$. 
So we are interested in which cases we have $hp^{k+1} | hp^k\eta^-(x+\eta^-\eta^+\cdot (l-y))$.
By Lemma~\ref{lem:arithmetic_properties} it suffices to check when $p| \eta^-(x+\eta^-\eta^+\cdot(l-y))$ holds.
We have $\eta^-\eta^+ = mn - w$, by Remark~\ref{rem:etaproduct}. Thus we ask when $p|(m-w)((mn-w)\cdot(l-y)+x)$. 
We now just look at the \textit{real part} of the right hand side (i.e. the part in $\ZZ$). 
This gives us that $p|x+mn(l-y)$ since $p \nmid m$. Considering the \textit{imaginary part} (i.e. the coefficient of $w$)
we get $p|m(l-y)$. Thus $l=y$ and therefore $p|x$. This yields $x=0$ or in other words $i=j$.
\end{proof}

Since the proof did not depend on the condition $k>0$ we get an immediate corollary.

\begin{cor} \label{cor:lower_bound}
   Let $p \in \ZZ$ be a prime number with $p|d$ and $\gcd(m,p)=1$ and let $h \in \ZZ$ be an arbitrary element with $\gcd(h,p)=1$. Then 

$$\left[ (\SL^1(L_d) \cap \Gamma^D_0(h)) :  (\SL^1(L_d) \cap \Gamma^D_0(hp)) \right] \geq p^2$$	
holds.
\end{cor}

Before proving that this index is indeed as big as possible, i.e. equal to $p(p+1)$, recall that the homology Veech group $\SL^1(L_d)$ is given by pairs $(A_i,B_i)$ with $A_i$ in the
ordinary Veech group $\SL(L_d)$ and $B_i$ in the so-called \textbf{complementary Veech group $\SL^c(L_d)$}. 

\begin{lem}
If two elements $(A_1,B_1)$ and $(A_2,B_2)$ in $\SL^1(L_d)$ yield the same representative in $\SL^1(L_d)/(\SL^1(L_d) \cap \Gamma_0((h_1,h_2)))$ then $A_1$ and $A_2$ yield the same representative in $\SL(L_d)\/(\SL(L_d) \cap \Gamma_0(h_1))$ and $B_1$ and $B_2$ yield the same representative in $\SL^c(L_d)/(\SL^c(L_d) \cap \Gamma_0(h_2))$. 
\end{lem}

\begin{proof} This is clear by definition. \end{proof}

Now we can easily prove the following: 



\begin{prop} \label{prop:divisors_of_discriminant_2}
   Let $p \in \ZZ$ be a prime number with $p|d$ and $\gcd(2m,p)=1$ and let $h \in \ZZ$ be an arbitrary element with $\gcd(h,p)=1$. Then 
$$\left[ (\SL^1(L_d) \cap \Gamma^D_0(h)) :  (\SL^1(L_d) \cap \Gamma^D_0(hp)) \right] = p(p+1)$$	
holds.
\end{prop}

\begin{proof}
  Let $q$ be the index $\left[ (\SL^1(L_d) \cap \Gamma^D_0(h)) :  (\SL^1(L_d) \cap \Gamma^D_0(hp)) \right]$ and let $h=(h_1,h_2)$. We know by
  Corollary~\ref{cor:lower_bound} that $q\geq p^2$. Furthermore 
  $$\left[ (\SL^1(L_d) \cap \Gamma^D_0(h)) :  (\SL^1(L_d) \cap \Gamma^D_0(hp)) \right]$$ 
  is divided by
  $$\left[ (\SL(L_d) \cap \Gamma_0(h)) :  (\SL(L_d) \cap \Gamma_0(h_1p)) \right].$$ 
  Finally, it follows that
  \begin{align*}
  \left[ (\SL(L_d) \cap \Gamma_0(h)) : \right. & \left. (\SL(L_d) \cap \Gamma_0(h_1p)) \right]\\ &  = \frac{\left[ (\SL(L_d) :  (\SL(L_d) \cap \Gamma_0(h_1p)) \right]}{\left[ \SL(L_d) :  (\SL(L_d) \cap \Gamma_0(h_1)) \right]}\\ & \stackrel{\textrm{Corollary~\ref{cor:schmitt}, Case $2 \nmid p$}}{=} \frac{[\SL_2(\ZZ):\Gamma_0(h_1p)]}{\SL_2(\ZZ):\Gamma_0(h_1)}\stackrel{\eqref{sec2:eq2}}{=}p+1.
  \end{align*}
\end{proof}

\paragraph{Divisors of split prime numbers.} The second case which we treat concerns prime numbers $p \in \ZZ$ that split,
i.e. $p \in \ZZ$ with $p \nmid d$. Choose the prime ideal $\mathfrak{p}$ such that $p=\mathfrak{p}\mathfrak{p}^\sigma$.

\begin{lem} \label{lem:split_divisors_1}
  Assume $\gcd(\mathfrak{p},\eta^-)$ $=1$ and let $h \subset \OD$ be an arbitrary ideal with $\gcd(h,\mathfrak{p})=1$. Then for all $k \in \mathbb{N}$ 
$$\left[ (\SL^1(L_d) \cap \Gamma^D_0(h\mathfrak{p}^k)) :  (\SL^1(L_d) \cap \Gamma^D_0(h\mathfrak{p}^{k+1})) \right] = p $$	
holds.
\end{lem}
   
\begin{proof} The aim is to find $p$ matrices in $\Gamma^D_0(h\mathfrak{p}^k)$ which are inequivalent modulo $\Gamma^D_0(h\mathfrak{p}^{k+1})$. 
We will now describe the simplest set of matrices which we found. We have to distinguish two cases.\\[12pt]
\textit{1. Case: $\mathfrak{p}$ is not conjugated to any of the $\mathfrak{q}$ dividing $h$, i.e. $\mathfrak{q} \neq \mathfrak{p}^\sigma$ for all prime ideals $\mathfrak{q}|h$.} \\
Then $Z^{Hp^ki}$, $i=1,\ldots,p$, where $H= \mathcal{N}(h)$, are obviously in $\Gamma^D_0(h\mathfrak{p}^k)$ but inequivalent modulo $\Gamma^D_0(h\mathfrak{p}^{k+1})$. \\[12pt]
\textit{2. Case: $\mathfrak{p}$ is conjugated to a certain $\mathfrak{q}$ dividing $h$ with order $f_{\mathfrak{q}}$}\\
We set $\mathfrak{h}:=h\mathfrak{q}^{-f_\mathfrak{q}}$ and $H':=\N(\mathfrak{h})$. Let us assume that $\mathfrak{q}^l|\eta^-$ and
$\mathfrak{q}^{l+1} \nmid \eta^-$ for some $l \in \ZZ_{\geq 0}$. We now have to distinguish two subcases.\\
\textit{Case (a)} $f_\mathfrak{q} - l > k$\\ 
In particular, this implies that $\mathfrak{q}$ has a higher order in $h$ than $k$. We therefore want to divide out powers of $\mathfrak{q}$ first. This means that we have to show 
$$ \left[ (\SL^1(L_d) \cap \Gamma^D_0(\mathfrak{h}\mathfrak{q}^{f_\mathfrak{q}-1}\mathfrak{p}^k)) :  (\SL^1(L_d) \cap  \Gamma^D_0(\mathfrak{h}\mathfrak{q}^{f_\mathfrak{q}}\mathfrak{p}^k)) \right] = p.$$
We set $u:=f_\mathfrak{q}-l-1$ and $v:=p^u$. The matrices $Z^{H'vi}$, $1 \leq i \leq p$ lie in $\Gamma^D_0(\mathfrak{h}\mathfrak{q}_h^{f_q-1}\mathfrak{p}^k))$
since $\mathfrak{h}\mathfrak{q}^{f_\mathfrak{q}-1}\mathfrak{p}^k| H'v\eta^- \cdot i$ but the matrices are incongruent modulo 
$\Gamma^D_0(\mathfrak{h}\mathfrak{q}^{f_\mathfrak{q}}\mathfrak{p}^k)$ since $\mathfrak{h}\mathfrak{q}^{f_\mathfrak{q}}\mathfrak{p}^k| H'v\eta^- \cdot i$ implies $\mathfrak{q}|i$ by definition of $v$. This means that we may restrict to case $(b)$, namely:\\ 
\textit{Case (b)} $k \geq f_\mathfrak{q} - l$.\\
We set $v = p^k$ and look at the matrices $Z^{H'vi}$, $1 \leq i \leq p$. These matrices are all in $\Gamma^D_0(h\mathfrak{p}^k)$ but are not equivalent modulo $\Gamma^D_0(h\mathfrak{p}^{k+1})$ by definition of $v$.
\end{proof}

\begin{lem}
   Assume $\gcd(\mathfrak{p},\eta^*) =1$ and let $h \subset \OD$ be an arbitrary ideal
   with $\gcd(\N(h),\mathfrak{p})=1$. Then 
   $$\left[ (\SL^1(L_d) \cap \Gamma^D_0(h)) :  (\SL^1(L_d) \cap \Gamma^D_0(h\mathfrak{p})) \right] = p + 1 $$	
   holds.
\end{lem}

\begin{proof}
We aim to find a $k \in \NN$ such that the matrices
\begin{eqnarray*}
(I) & Z^{Hi}, & i=1,\ldots,p \\
(II) & Z^{Hk}T &
\end{eqnarray*}
lie in $\Gamma^D_0(h)$ and are pairwise incongruent modulo $\Gamma^D_0(h\mathfrak{p})$. Indeed, we choose $k$ as follows: 
let $k \in \left\{ 1,\ldots,p \right\}$, such that $\mathfrak{p}|kH\eta^-\eta^++1$. This is always possible since 
$\mathfrak{p} \nmid \eta^*$ and $\mathfrak{p} \nmid H$ and $1,..,p$ are incongruent modulo $\mathfrak{p}$. 
Furthermore we know that $k \neq p$ because otherwise it would follow that $\mathfrak{p}|1$.
By definition, it is clear that all the matrices $(I)$ and $(II)$ lie in $\Gamma^D_0(h)$ and that the matrices in $(I)$ are 
pairwise incongruent. Lastly, we calculate $$(Z^{Hk}TZ^{-Hi})_{2,1} = H\eta^-(-(kH\eta^-\eta^++1)i+k).$$
Now suppose $\mathfrak{p}|H\eta^-(-(kH\eta^-\eta^++1)i+k)$. In other words this means 
$\mathfrak{p}|(-i(kH\eta^-\eta^++1)+k)$ which is yet equivalent to $\mathfrak{p}|k$ as $\mathfrak{p}|(kH\eta^-\eta^++1)$. 
This is a contradiction.

\end{proof}

\begin{lem}
 Assume $\gcd(p,\eta^*) =1$ and let $h \subset \OD$ be an arbitrary
 ideal with $\gcd(\N(h),\mathfrak{p})=1$. Then 
 $$\left[ (\SL^1(L_d) \cap \Gamma^D_0(h\mathfrak{p}^\sigma)) :  (\SL^1(L_d) \cap \Gamma^D_0(h\mathfrak{p}^\sigma\mathfrak{p})) \right] = p + 1 $$	
 holds.
\end{lem}

\begin{proof}
Let us construct a matrix $W$ which is a word in $T$ and $Z$ such that the matrices
\begin{eqnarray*}
(I) & WT^{i}, & i=1,\ldots,p \\
(II) & \Id &
\end{eqnarray*}
 lie in $\Gamma^D_0(h\mathfrak{p}^\sigma)$ and are pairwise incongruent modulo $\Gamma^D_0(h\mathfrak{p}^\sigma\mathfrak{p})$. This time, we choose $k$ as follows: let $k \in \left\{ 1,\ldots,p-1 \right\}$, such that $\mathfrak{p}^{\sigma}|k\eta^-\eta^++1$. This is possible since $\mathfrak{p}^{\sigma} \nmid \eta^*$.
 Now suppose that $\mathfrak{p}|k\eta^-\eta^++1$. Since $\gcd(\mathfrak{p},\mathfrak{p}^{\sigma})=1$ we would have 
$\mathfrak{p}\mathfrak{p}^\sigma|k\eta^-\eta^++1$ which would imply $p|k$ (by Remark~\ref{rem:etaproduct} and Lemma~\ref{lem:arithmetic_properties}). 
This is a contradiction. Hence $\mathfrak{p}\nmid k\eta^-\eta^++1$. \\
We now choose $W:=ZT^{k}Z^HT^{-k}Z^{-1}$. Then we have
$$(WT^i)_{2,1} = H \eta^- (k\eta^+\eta^- +1)^2$$
and so all the matrices lie in $\Gamma^D_0(h\mathfrak{p}^\sigma)$ but none of them is equivalent to the identity.
Finally
$$(WT^{x}W^{-1})_{2,1} = -H\eta^{-2}\eta^+(k\eta^-\eta^++1)^4\cdot x$$
holds. As $\mathfrak{p} \nmid H$, $\mathfrak{p} \nmid \eta^*$ and $\mathfrak{p} \nmid (k\eta^-\eta^++1)$ the matrices in $(I)$ are
pairwise incongruent modulo $\Gamma^D_0(h\mathfrak{p}^\sigma\mathfrak{p})$.
\end{proof}

Summarizing we have proven:

\begin{prop} \label{prop_splitting_primes} Assume that $\gcd(p,\eta^*) =1$. Then 
 for all ideals $h \subset \OD$
 with $\gcd(h,\mathfrak{p})=1$ 
$$\left[ (\SL^1(L_d) \cap \Gamma^D_0(h)) :  (\SL^1(L_d) \cap \Gamma^D_0(h\mathfrak{p})) \right] = p+1$$	
holds.
\end{prop}

\paragraph{Divisors of $\eta^*$.} So far we have only treated ideals $\mathfrak{a}$ with $\gcd(\mathfrak{a},\eta^*)=1$.
Before we come to the case $\gcd(\mathfrak{a},\eta^*) \neq 1$ let us analyze the prime divisors
of $m-w$ and $n-w$. There are two different types of prime ideals
of norm $p$, namely
$$\mathfrak{p}_1 = [p,w] \ \ \textrm{and} \ \ \mathfrak{p}_2 = [p, d+w].$$
If the prime ideal is of type $1$ and divides $m-w$ it follows that $p|m$ and that $\mathfrak{p}_1 \nmid d - w$
since $p \nmid d$. If the prime ideal is of type~$2$ and divides $m-w$, it follows that $p|m+d$ and that $\mathfrak{p}_2 | d- w$.
Now we choose $m=d-1$ and $n=2$ whenever this is possible by Corollary~\ref{cor:classification} and $m=d-2$ and $n=3$ 
otherwise. Note that the only possible common divisor of $m-w$ and $n-w$ are in each case the prime ideal divisors of~$2$. 

\begin{rem}
  By the choice of $m$ there does not exist a $p \in \ZZ$ with $p|d$ and $p|m$. Thus, the main theorem is proven for all
  odd divisors of the discriminant.
\end{rem}


\begin{lem}
   Assume $\mathfrak{p}|\eta^-$ and let $h \subset \OD$ be an arbitrary ideal with $\gcd(h,\mathfrak{p})=1$. Then for all $k \in \mathbb{N}$ 
$$\left[ (\SL^1(L_d) \cap \Gamma^D_0(h\mathfrak{p}^k)) :  (\SL^1(L_d) \cap \Gamma^D_0(h\mathfrak{p}^{k+1})) \right] = p $$	
holds.
\end{lem}

\begin{proof} \textit{Case $n=2$:} Note that $\mathfrak{p} \nmid d$ since $p$ splits. Since $\mathfrak{p}$ is a prime ideal it is either of the form $[p,w]$ or $[p,d+w]$. As $\mathfrak{p}|\eta^-$ it must even be of the form $[p,w]$. Hence we have that $\mathfrak{p} \nmid (d-w)$ but $\mathfrak{p}^\sigma | (d-w)$. We set $H'=\N(h(h,\mathfrak{p}^\sigma)^{-1})$ and claim that the matrices $E^{H'p^ki}, 1 \leq i \leq p$ 
are a set of coset representatives. Now assume that $\mathfrak{p}^{k+1} | H'(d-w)p^k i$. This yields $\mathfrak{p}| (d-w)i$ (Lemma~\ref{lem:arithmetic_properties}). Since $\mathfrak{p} \nmid d-w$ the claim follows.\\
\textit{Case $n=3$:} We have to treat the case $\mathfrak{p}|\eta^-$ and $\mathfrak{p}|2$ separately. 
If $\mathfrak{p}=[2,d+w]$ divides $3-w$ then $d-3$ is odd and thus $\mathfrak{p}$ does not divide $F_{2,1}$. Hence the claim 
follows.
\end{proof}

\begin{lem}
  Assume $\mathfrak{p}|\eta^-$ and $\mathfrak{p} \nmid 2$ and let  $h \subset \OD$ be an arbitrary ideal with $\gcd(h,\mathfrak{p})=1$. Then 
$$\left[ (\SL^1(L_d) \cap \Gamma^D_0(h)) :  (\SL^1(L_d) \cap \Gamma^D_0(h\mathfrak{p})) \right] = p+1 $$	
holds.
\end{lem}

\begin{proof}
  \textit{Case (1) $\mathfrak{p} \nmid d-n$}: Note that since $\mathfrak{p}| \eta^-$ and $\mathfrak{p} \nmid d-n$ we get that $\mathfrak{p} \nmid (d-w)$ but $\mathfrak{p}^\sigma | (d-w)$. Let
  $H'=\N(h(h,\mathfrak{p}^\sigma)^{-1})$. We claim that the matrices
\begin{eqnarray*}
(I) & E^{H'}T^k & k=1,\ldots,p \\
(II) & \Id &
\end{eqnarray*}
lie in $\Gamma^D_0(h)$ and are pairwise incongruent modulo $\Gamma^D_0(h\mathfrak{p})$. Since 
$$(E^{H'}T^k)_{2,1} = H'(d-w)$$
none of the matrices in $(I)$ is congruent to the identity. On the other hand
$$(E^MT^kE^{-M})_{2,1} = -H'^2\eta^+(d-w)^2 \cdot k$$
and thus the matrices $(I)$ are pairwise incongruent modulo $\Gamma^D_0(h\mathfrak{p})$.\\
  \textit{Case (2) $\mathfrak{p} | d-n$}: By case (1) we may assume that $\mathfrak{p} \nmid \N(h)$. The arguments work as above after replacing $E$ by $F$.
\end{proof}

For the divisors of $\eta^+$, Nori's theorem significantly facilitates the proof.

\begin{lem}
   Assume $\mathfrak{p}|\eta^+$ and let $h \subset \OD$ be an arbitrary ideal with $\gcd(h,\mathfrak{p})=1$. Then 
$$\left[ (\SL^1(L_d) \cap \Gamma^D_0(h) :  (\SL^1(L_d) \cap \Gamma^D_0(h\mathfrak{p}) \right] = p+1 $$	
holds.
\end{lem}

\begin{proof}
  \textit{Case $n=2$:} Note that since $\mathfrak{p}| \eta^+=2-w$ we have that $\mathfrak{p} \nmid 3-w$. By what we have proven so far, we may
  assume that $\gcd(\N(h),\mathfrak{p})=1$. The matrices $F^{Mi}$ and $T^{Mi}$ all lie in $\Gamma^D_0(h)$ and they
  all yield different elements when they are projected to $\SL_2(\OD/\mathfrak{p}\OD) \cong \SL_2(\mathbb{F}_p)$. By
  the corollary to Nori's theorem (Corollary~\ref{cor:Nori}), this map has to be surjective and therefore $\SL^1(L_d) \cap \Gamma_D^0(h\mathfrak{p})$ must have
  the maximal possible index in $\SL^1(L_d) \cap \Gamma^D_0(h)$.\\
  \textit{Case $n=3$:} The same reasoning as above is possible since $\eta^+=3-w$ and $F_{1,2}=(4-w)$. 
\end{proof}

\paragraph{The divisors of $2$.} We finally come to the missing prime ideal divisors of~$2$. 

\begin{lem} If~$2$ splits, let $\mathfrak{p}_2 = (2,\eta^-)$ and let $h \subset \OD$ be an arbitrary ideal
 with $\gcd(h,2)=1$. If $L_d \in \mathcal{A}_d$, then 
 $$\left[ (\SL^1(L_d) \cap \Gamma^D_0(h)) :  (\SL^1(L_d) \cap \Gamma^D_0(2h)) \right] = 6$$
 and	
 $$\left[ (\SL^1(L_d) \cap \Gamma^D_0(h)) :  (\SL^1(L_d) \cap \Gamma^D_0(h\mathfrak{p}_2)) \right] = 2.$$
\end{lem}

\begin{proof}
  It can easily be checked that the matrices $T^H, Z^H, Z^HT^H,E^H, E^HZ^H$ and $E^HZ^HT^H$ are inequivalent modulo $\Gamma^D_0(2h)$.
  By Weitze-Schmithüsen's Theorem respectively Corollary~\ref{cor:schmitt} the index is at most $6$. Thus the claim follows.
\end{proof}

\begin{lem} If $2|d$ then 
 $$\left[ (\SL^1(L_d) \cap \Gamma^D_0(h)) :  (\SL^1(L_d) \cap \Gamma^D_0(2h)) \right] = 4.$$	
\end{lem}

\begin{proof}
  The matrices $T^H,Z^H,Z^HT^H$ are inequivalent modulo $\Gamma^D_0(2h)$ and by Corollary~\ref{cor:schmitt} the index is either~$2$ or~$4$.
\end{proof}
This finishes the proof of Theorem~\ref{thm:main_thm}, $(1)$ and $(3)$.\\[12pt]
The most special case is the case where $d$ is odd and $L_d \in \mathcal{B}_d$. Then $m=3$ and $\eta^-=3-w$. Since the norm of
$\eta^-$ is positive we have that $\mathfrak{p}_2^\sigma | 3-w$ (but $\mathfrak{p}_2 \nmid 3-w$). So we are now only
interested in $\Gamma^D_0(\mathfrak{p}_2^\sigma)$.
\begin{rem} By considering the matrix $F$ we immediately see that for all ideals $h \subset \OD$ 
 with $\gcd(h,\mathfrak{p}_2)=1$
 $$\left[ (\SL^1(L_d) \cap \Gamma^D_0(h) :  (\SL^1(L_d) \cap \Gamma^D_0(h\mathfrak{p}_2^\sigma) \right] \geq 2$$
 holds if $L_d \in \mathcal{B}_d$.
\end{rem}
So it remains to show that the index is not greater than~$2$. To prove this, it is convenient to consider the subgroup
$\SL^1(L_d) \cap \Gamma^D(2)$ instead and to show that this index is at most $\frac{1}{3} [\SL_2(\OD): \Gamma^D(2)]$. By projection on the second factor of
the homology Veech group and by the arguments given in the proof of \cite[Theorem~3~(ii)]{WS12} it suffices to show that one
of the non-integral Weierstra\ss \ points on $E_2$ can be distinguished from the others. This is clear by Theorem~\ref{thm:weierstrass}
since $E_2$ has only one integral Weierstra\ss \ point. Thus we have proven:
\begin{prop} We have
 $$\left[ \SL^1(L_d) :  (\SL^1(L_d) \cap \Gamma^D(2)) \right] \leq \frac{1}{3}  [\SL_2(\OD): \Gamma^D(2)] $$
 if $L_d \in \mathcal{B}_d$.
\end{prop}

\begin{cor}
  For all ideals $h \subset \OD$  with $\gcd(h,\mathfrak{p}_2^\sigma)=1$
 $$\left[ (\SL^1(L_d) \cap \Gamma^D_0(h) :  (\SL^1(L_d) \cap \Gamma^D_0(h\mathfrak{p}_2^\sigma) \right] = 2$$
 holds if $L_d \in \mathcal{B}_d$.
\end{cor}

This finishes the proof of Theorem~\ref{thm:main_thm}, $(2)$.

\appendix
\section{Appendix} \label{App}

In the appendix we collect some results on  quadratic orders $\OD$ of square discriminant. In particular, we will give proofs of the results mentioned in the main part of the paper.\\[12pt] Let $K =\QQ \oplus \QQ$. A quadratic order of square discriminant is a subring $\mathcal{O}$ of $K$ that is also a finitely generated $\ZZ$-module such that $1 \in \mathcal{O}$ and $\mathcal{O} \otimes \QQ = K$. Recall that any such quadratic order is of the form 
$$\OD = \left\{ (x,y) \in \ZZ \times \ZZ \ | \ x \equiv y \mod d \right\}$$
with $D=d^2$.
\paragraph{Norm and trace.} Analogously as in the non-square case we have:
\begin{rem} An element $z \in K$ is in $\QQ$ if and only if $z = z^\sigma$, since $\QQ$ is diagonally embedded into $K$.\end{rem}
Recall that if $D \in \mathbb{N}$ is square-free, then the order $\OD$ is the ring of integers of $K=\QQ(\sqrt{D})$ and an element
$z \in K$ is in $\OD$ if and only both trace and norm of $z$ lie in $\ZZ$.\footnote{This in not true any more if the discriminant $D$ is not square-free as the example $D=45$ and $z=(2+5w)/3$ shows.} In accordance with this, a similar property also holds if $D=d^2$ is the square of a prime number. We can however not expect that $z \in \OD$ if and only if $\tr(z)$ and $\N(z)$ are in $\ZZ$, as we can see by the example $d=5$ and $z=(1,2)$. Indeed, one has to additionally impose some congruence conditions on the trace and the norm.

\begin{lem} Let $z \in K$. If $D=d^2$ and $d$ has no prime divisor of order greater than $1$, then  $z \in \OD$ if and only if $\tr(z), \N(z) \in \ZZ$ and
$\tr(z) \equiv 2v \mod d$ and $\N(z) \equiv v^2 \mod d$ for some $v \in \ZZ$.
\end{lem}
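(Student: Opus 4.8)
The plan is to work with the explicit models $\OD = \{(x,y) \in \ZZ^2 : x \equiv y \bmod d\}$ and $K = \QQ \oplus \QQ$, under which an element $z = (x,y)$ has $\tr(z) = x+y$ and $\N(z) = xy$. The forward implication is then immediate: if $z = (x,y) \in \OD$ then $x, y \in \ZZ$ with $x \equiv y \bmod d$, so $\tr(z), \N(z) \in \ZZ$, and taking $v := x$ gives $\tr(z) = x + y \equiv 2x = 2v \bmod d$ and $\N(z) = xy \equiv x^2 = v^2 \bmod d$.

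For the converse, suppose $z = (x,y) \in K$ satisfies $\tr(z), \N(z) \in \ZZ$ together with the two congruence conditions for some $v \in \ZZ$. The first step is to show $x, y \in \ZZ$: they are the two roots of the monic polynomial $t^2 - \tr(z)\, t + \N(z) \in \ZZ[t]$, hence algebraic integers; being rational, they lie in $\ZZ$. (Equivalently, one can clear denominators directly: writing $x = p/q$ in lowest terms, the conditions $x+y \in \ZZ$ and $xy \in \ZZ$ force $q \mid p$, so $q = 1$.) The second step is to show $x \equiv y \bmod d$. Set $a := x - v$ and $b := y - v$. From $\tr(z) \equiv 2v \bmod d$ we get $a + b \equiv 0 \bmod d$, and from $\N(z) \equiv v^2 \bmod d$ we compute $ab = \N(z) - v\,\tr(z) + v^2 \equiv v^2 - 2v^2 + v^2 = 0 \bmod d$. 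Substituting $b \equiv -a$ into $ab \equiv 0 \bmod d$ yields $d \mid a^2$, and since $d$ has no repeated prime factors this forces $d \mid a$ (check prime by prime: $p \mid d$ implies $p \mid a^2$ implies $p \mid a$, and distinct such primes give $d \mid a$). Hence $d \mid x - v$ and likewise $d \mid b = y - v$, so $x \equiv v \equiv y \bmod d$, i.e.\ $z = (x,y) \in \OD$.

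The only genuine obstacle is the step $d \mid a^2 \Rightarrow d \mid a$, which is precisely where the hypothesis that $d$ be squarefree is used and cannot be dropped — the surrounding counterexamples (e.g.\ $D = 45$) show that without it both $\tr(z)$ and $\N(z)$ can satisfy the congruences while $z \notin \OD$. Everything else is routine bookkeeping with the trace and norm.
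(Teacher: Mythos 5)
Your proof is correct and follows essentially the same route as the paper's: integrality of $x,y$ from $\tr(z),\N(z)\in\ZZ$, then eliminating $y$ via the trace congruence to arrive at $(x-v)^2\equiv 0 \bmod d$ (your $d\mid a^2$), where the squarefree hypothesis is invoked. The only cosmetic difference is that you justify the step $x,y\in\ZZ$ explicitly (via the monic polynomial $t^2-\tr(z)t+\N(z)$), which the paper asserts without comment.
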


\begin{proof}
  If $z \in \OD$ then $z=(x,y)$ for some $x,y \in \ZZ$. Thus $\tr(z)=(x+y,x+y)$ and $\N(z)=(xy,xy)$ and 
hence $\tr(z), \N(z) \in \ZZ$. Moreover $x \equiv y \mod d$ implies that $x+y \equiv 2x \mod d$ and $xy \equiv x^2 \mod d$. \\ 
On the other hand, let $z=(x,y)$ for some $x,y \in \QQ$ and let $\tr(z)$ and $\N(z)$ be in $\ZZ$. Since $x+y$ and 
$xy$ are both in $\ZZ$, also $x$ and $y$ are in $\ZZ$. Then $\tr(z) \equiv 2v \mod d$ implies
that $y \equiv 2v - x \mod d$. Inserting this into $\N(z) \equiv v^2 \mod d$ gives $(x-v)^2 \equiv 0 \mod d.$
Since $d$ has no quadratic term we have $x \equiv v \mod d$ and thus $x \equiv y \mod d.$ 
\end{proof}
We will from now on until the end of the appendix exclusively restrict to the case where $D=d^2$. The following lemma describes some basic arithmetic properties in this situation.
\begin{lem} \label{lem:arithmetic_properties} \begin{itemize}
		\item[(i)]  If $b \in \OD$ is not a zero divisor and $a,c \in \OD$ then $ab|cb$ if and only if $a|c$. 
		\item[(ii)] If $p \in \ZZ$ is a prime number and $a,b \in \ZZ$ then $p|a+wb$ if and only if $p|a$ and $p|b$.
		\item[(ii)] If $p,m,x \in \OD$ and $p \in \ZZ$ is a prime number with $p|d$ and $p|mx$. Then $p \nmid \N(m)$ implies $p|x$.
	\end{itemize}
\end{lem}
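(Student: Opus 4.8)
The plan is to prove the three items separately, by short direct computations; items (i) and (ii) are purely formal, and essentially all the content sits in the last item. For (i), the ``if'' direction is immediate: from $c=aa'$ with $a'\in\OD$ we get $cb=(ab)a'$. For the converse, if $cb=(ab)a'$ for some $a'\in\OD$, then $(c-aa')b=0$, and since $b$ is not a zero divisor we conclude $c=aa'$, i.e. $a\mid c$; this uses nothing beyond $\OD$ being a commutative ring. For (ii) I would invoke that $\mathds{1},w$ form a $\ZZ$-basis of $\OD$ (the fact proved in the appendix) and that they are $\QQ$-linearly independent in $K=\QQ\oplus\QQ$, so that an element $\alpha\mathds{1}+\beta w$ with $\alpha,\beta\in\QQ$ lies in $\OD$ if and only if $\alpha,\beta\in\ZZ$. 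Writing the element as $a+wb=a\mathds{1}+bw$, dividing by $p$ gives $(a/p)\mathds{1}+(b/p)w$, which lies in $\OD$ exactly when $p\mid a$ and $p\mid b$. So far there is no obstacle.

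For the third item I would pass to the coordinate model $\OD=\{(u,v)\in\ZZ^2: u\equiv v\bmod d\}$. Writing $m=(m_1,m_2)$ and $x=(x_1,x_2)$, the hypothesis $p\nmid\N(m)=m_1m_2$ forces $p\nmid m_1$ and $p\nmid m_2$. Since $p\mid mx=(m_1x_1,m_2x_2)$ in $\OD$ implies in particular $p\mid m_1x_1$ and $p\mid m_2x_2$ in $\ZZ$, Euclid's lemma gives $p\mid x_1$ and $p\mid x_2$; write $x_i=px_i'$. The remaining point is that $y:=(x_1',x_2')$ must actually lie in $\OD$, i.e. $x_1'\equiv x_2'\bmod d$. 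To get this I would use the \emph{full} strength of $p\mid mx$ in $\OD$: it says that $(mx)/p=(m_1x_1',m_2x_2')\in\OD$, hence $m_1x_1'\equiv m_2x_2'\bmod d$; combined with $m_1\equiv m_2\bmod d$ (as $m\in\OD$) this yields $m_1(x_1'-x_2')\equiv 0\bmod d$, while from $x\in\OD$ one has $p(x_1'-x_2')=x_1-x_2\equiv 0\bmod d$. As $\gcd(m_1,p)=1$, a Bézout relation $um_1+vp=1$ gives $x_1'-x_2'\equiv 0\bmod d$, so $y\in\OD$ and $x=py$, i.e. $p\mid x$. A slicker alternative would be to observe that $p\nmid\N(m)$ is precisely the condition that the image of $m$ in $\OD/(p)\cong\mathbb{F}_p[T]/(T^2-dT)$ (a consequence of Theorem~\ref{thm:ramification_of_primes}) is a unit, and then $\bar m\bar x=0$ forces $\bar x=0$; but the coordinate/Bézout version is self-contained.

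The main obstacle is exactly this last verification in the third item: making sure that, after dividing the coordinates of $x$ by $p$, one is still inside $\OD$. The subtlety is that $p$ divides $d$, so $p$-divisibility in $\OD$ is strictly stronger than coordinatewise $p$-divisibility in $\ZZ\times\ZZ$ (the congruence defining $\OD$ does not simply cancel a factor of $p$); the hypothesis $p\nmid\N(m)$, i.e. $p\nmid m_1,m_2$, is what lets the Bézout combination of the two available congruences recover the missing one. Everything else is routine.
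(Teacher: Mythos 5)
Your proof is correct and follows essentially the same route as the paper's: items (i) and (ii) are dismissed there as immediate, and for (iii) the paper likewise passes to coordinates, deduces $p\mid x_1$ and $p\mid x_2$ (via norms, $\N(p)\mid\N(mx)$, rather than your direct coordinatewise reading of $p\mid mx$ plus Euclid), and then uses the congruence coming from $(mx)/p\in\OD$ together with $p\nmid m_1$ to recover $x_1/p\equiv x_2/p\bmod d$. Your explicit B\'ezout combination of the two congruences $d\mid m_1(x_1'-x_2')$ and $d\mid p(x_1'-x_2')$ makes that final cancellation cleaner than the paper's rather terse computation, but it is the same argument.
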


\begin{proof}  $(i)$ and $(ii)$ are clear by definition.\\
	$(iii)$ Let $m=(m_1,m_2)$ and $x=(x_1,x_2)$. The relation $p|mx$ implies $\N(p) | \N(mx)$ or in other words $p^2| m_1m_2x_1x_2$. Since
	$p \nmid \N(m)$ hence $p|x_1x_2$ and since $p|d$ we must have $p|x_1$ and $p|x_2$, i.e. $x_1=pk_1$ and $x_2=x_1+jd =pk_1+pk_2$
	with $k_1,k_2 \in \ZZ$. Then $(p,p)|(pk_1m_1,(pk_1+pk_2)(m_1+ld))$ is equivalent to
	$$(p,p)|(pm_1k_1,p(m_1k_1+m_1k_2+k_1ld+k_2ld))$$
	which implies $d|m_1k_2+(k_1+k_2)ld$. From this it follows either that $p|m_1$ which contradicts the fact $p \nmid \N(m)$ or
	$p|k_2$. In the latter case $x_2=p(k_1+d\widetilde{k_2})$ and so $p|x$ as we claim.
\end{proof}

As in the case of non-square-discriminants, $\OD$ is a Noetherian ring: this is true, because $\mathds{1}:=(1,1)$ and $w:=(0,d)$ form a $\ZZ$-basis of the $\ZZ$-module $\OD$. We call this basis the \textbf{standard basis} of $\OD$.


\paragraph{Ideals.} So far, the notion of the norm has only been defined for elements in $K$ but not for ideals. Recall that a \textbf{regular} ideal refers to an ideal containing a non-zero divisor.
\begin{lem} A prime ideal $\mathfrak{p}$ of $\OD$ is maximal if and only if it is regular. \end{lem}
 
 \begin{proof} If $\mathfrak{p}$ is a maximal $\OD$-ideal, then $\mathfrak{p} \cap \ZZ$ is maximal and so $\mathfrak{p} \cap \ZZ \neq 0$, i.e. $\mathfrak{p}$ is regular. Conversely, if $x \in \mathfrak{p}$ is regular, then $0 \neq \N(x) \in \mathfrak{p} \cap \ZZ$. Thus, $\mathfrak{p} \cap \ZZ$ is maximal and hence also $\mathfrak{p}$ is. 
 \end{proof}
 The definition of the norm of a reguler ideal perfectly generalizes the norm of an element. 
 
\begin{lem} \label{lem:app:norm}
 For $z \in \OD$ we have $\N((z))=|\N(z)|.$ 
\end{lem}

In particular, it follows that $\N((z))=z^2$ for all $z \in \ZZ$ and $\N_D((z))=0$, if $z \in \OD$ is a zero divisor. 

\begin{proof}
  The linear map $T_z: \OD \to \OD$ given by $T_z(\alpha) = z\alpha$ has determinant $\N(z)$. Thus, $\N((z))= [\OD : z\OD] = |\det(T_z)| = |	\N(z)|$.
\end{proof}

\paragraph{Ideals as modules.} Let us now describe all ideals in $\OD$. It is well-known that every ideal of $\OD$ is also a $\ZZ$-module. 
The point of view that ideals are modules is very useful for giving a list of prime ideals. Moreover it allows us to calculate $\textrm{Spec} \ \OD$. As in the case of non-square discriminants, it is essential to see that every $\ZZ$-module in $\OD$ is generated by at most two elements. 

\begin{prop} \label{prop:modules_appendix} Let $M \subset \OD$ be a $\ZZ$-module in $\OD$. Then there exist integers $m,n \in \ZZ_{\geq 0}$ and $a \in \ZZ$
 such that $$M = [n\mathds{1}; a\mathds{1}+mw]:= n\mathds{1}\ZZ \oplus (a\mathds{1}+mw)\ZZ.$$
\end{prop}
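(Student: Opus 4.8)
The plan is to exploit that $\OD = \ZZ\mathds{1} \oplus \ZZ w$ is a free $\ZZ$-module of rank two, so that every $\ZZ$-submodule is free of rank at most two, and then to bring a basis into the asserted triangular shape by a single projection argument (essentially the Hermite normal form for rank-two lattices). First I would introduce the $\ZZ$-linear projection $\pi \colon \OD \to \ZZ$, $x\mathds{1}+yw \mapsto y$, whose kernel is exactly $\ZZ\mathds{1}$. Restricting to $M$, the image $\pi(M)$ is a subgroup of $\ZZ$, hence $\pi(M) = m\ZZ$ for a unique $m \in \ZZ_{\geq 0}$, and the kernel $M \cap \ker\pi = M \cap \ZZ\mathds{1}$ is a subgroup of $\ZZ\mathds{1} \cong \ZZ$, hence equals $n\mathds{1}\ZZ$ for a unique $n \in \ZZ_{\geq 0}$.

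Next I would choose a lift of the generator of $\pi(M)$: if $m=0$ set $a:=0$ and $v:=0$; if $m>0$ pick any $v \in M$ with $\pi(v)=m$ and write $v = a\mathds{1}+mw$ with $a \in \ZZ$, which simultaneously defines the integer $a$. I then claim $M = n\mathds{1}\ZZ \oplus v\ZZ = [n\mathds{1};\, a\mathds{1}+mw]$. The inclusion $\supseteq$ is immediate from $v \in M$ and $n\mathds{1} \in M$. For $\subseteq$, given $\mu \in M$ write $\pi(\mu)=km$ for some $k \in \ZZ$; then $\mu - kv \in M \cap \ker\pi = n\mathds{1}\ZZ$, so $\mu - kv = \ell n \mathds{1}$ for some $\ell \in \ZZ$ and hence $\mu = \ell n \mathds{1} + k v$. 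Directness of the sum follows by applying $\pi$: if $\ell n \mathds{1} + kv = 0$ then $km = 0$, which forces $k=0$ (when $m>0$) and then $\ell n = 0$, while for $m=0$ one has $v=0$ and again $\ell n \mathds{1}=0$.

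I do not expect a genuine obstacle here: the only points requiring care are purely bookkeeping — the degenerate situations where $m=0$ or where $M$ has rank $0$ or $1$, all of which the statement already permits by allowing $m$ and $n$ to vanish, and the observation that the coefficient $a$ is an honest integer precisely because $v$ was chosen inside $\OD = \ZZ\mathds{1}\oplus\ZZ w$, so its $\mathds{1}$-coordinate lies in $\ZZ$. The argument is formally identical to the non-square case, so it can also simply be cited, but the projection proof above is short enough to give in full.
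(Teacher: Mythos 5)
Your proof is correct and is essentially the paper's own argument: the subgroup $H=\{s\in\ZZ: r\mathds{1}+sw\in M\}$ used there is exactly your $\pi(M)$, and both proofs proceed by lifting a generator of this image and reducing modulo it to land in $M\cap\ZZ\mathds{1}$. Your explicit treatment of directness and of the degenerate cases $m=0$, $n=0$ is a minor tidying of details the paper leaves implicit.
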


\begin{proof} Consider the subgroup $H:= \left\{s \in \ZZ: r\mathds{1}+sw \in M \right\}$ of $\ZZ$. As $H$ is a subgroup
of $\ZZ$, it is of the form $m\ZZ$ for some $m \geq 0$. By construction, there exists an $a \in \ZZ$ with
$a\mathds{1} + mw \in M$. Furthermore we know that $M \cap \ZZ\mathds{1}$ can be regarded a subgroup of $\ZZ$ and so
$M \cap \mathds{1}\ZZ = n \mathds{1} \ZZ$ for some $n \geq 0.$  We claim that $M = n\mathds{1}\ZZ \oplus (a\mathds{1} + mw) \ZZ.$
The inclusion $\supseteq$ is evident. Hence let us assume that $r\mathds{1} + sw \in M$. Since $s \in H$ we have
$s = um$ for some $u \in \ZZ$, and thus
$$r\mathds{1} -ua\mathds{1} = r\mathds{1} +sw - u(a \mathds{1} +mw) \in M \cap \mathds{1}\ZZ.$$
Hence $r-ua = nv$. But then
$$r\mathds{1} + sw = (r-ua)\mathds{1} + u(a\mathds{1}+mw) = nv\mathds{1} + u(a\mathds{1}+mw) \in n\mathds{1}\ZZ \oplus (a\mathds{1} + mw) \ZZ.$$
\end{proof}

As it simplifies notation and cannot cause any confusion the symbol $\mathds{1}$ is usually omitted when embedding $\ZZ$ into $\OD$. In other words, we write every $\ZZ$-module in $\OD$ as $[n;a+mw]$ for some $a,n,m \in \ZZ.$\\[12pt]
Since every ideal of $\OD$ is also $\ZZ$-module, it is generated by at most two elements. The converse is not true since
e.g. $M=[1;0] = \ZZ$ is a $\ZZ$-submodule of $\OD$, but not an ideal. We therefore now describe under which conditions
on $a,m,n$ the $\ZZ$-module $M$ is also an ideal. These conditions are just the same as in the case of non-square
discriminants.

\begin{prop} \label{prop:ideal_conditions_appendix}
 A regular $\ZZ$-module $M=[n;a+mw]$ is an ideal if and only if $m|n$, $m|a$, i.e. $a=mb$ for some $b \in \ZZ$, and
 $n|m\N(b+w)$.
\end{prop}

\begin{proof}
 Suppose that $M$ is an ideal and consider the group $H$ from the proof of Proposition~\ref{prop:modules_appendix}. 
 Then $c \in M \cap \ZZ$ implies $cw \in M$ and hence $c \in H$. This shows that $n\ZZ = M \cap \ZZ \subset H = m\ZZ$ 
 or in other words that $m|n$. Observe that $w^2 = dw$. Since $M$ is an ideal, $a+mw \in \M$ implies that
 $(a+mw)w=(a+md)w \in M$. By definition of $H$ we therefore have that $a \in H$ and hence $m|a$. Finally, we set
 $\beta:=a+mw=m(b+w)$. Then $\beta \in M$ yields $\beta(b+w^\sigma) \in M$. Hence $n| m\N(b+w).$\\
 Now suppose that all the divisibility relations are fulfilled by $M$. It suffices to check that $nw$ and
 $(a+mw)w$ both lie in $M$. We have
 $$nw=\frac{n}{m}mw = \frac{n}{m} (a+mw) - \frac{n}{m}a = \frac{n}{m}(a+mw)-bn$$
 and so $nw \in M$ since $m|n$. And
 \begin{align*}
 (a+mw)w & = aw+mw^2 = m(b+d)w\\
 & = (b+d)(a+mw) - mb(b+d)\\ 
 & = (b+d)(a+mw)-m\N(b+w)
 \end{align*}
 implies $(a+mw)w \in M$, because $n|m\N(b+w).$
\end{proof}

For an arbitrary ideal $\mathfrak{a} = [n;a+mw]$, it is straightforward to check (by giving an explicit list of representatives) that $\N(\mathfrak{a})=|mn|$. 
Note that if $M=[n;a+mw]$ is an ideal then its conjugated module is given by $[n;a+mw]^\sigma=[n;a+md-mw].$ If $\mathfrak{a}=((x,y))$ is a principal ideal then also $\mathfrak{a}^\sigma=((y,x))$ is a principal ideal. 

\begin{cor} \label{cor:prime_conditions:appendix}
  Every ideal of prime norm $p$ is of the form $[p;a+w]$ for some $a \in \ZZ$ with $p|\N(a+w).$ These ideals
  are indeed prime ideals.
\end{cor}

\begin{proof}
  The first assertion is clear from Proposition~\ref{prop:ideal_conditions_appendix}. The second assertion follows from the fact that if $[p;a+w]$ is an $\OD$-ideal, then it has index $p$ in $\OD$ and so it is a maximal ideal.
\end{proof}
The corollary shows that there does not exist any inert prime number if $D$ is a square because it is always possible to find an $a \in \ZZ$ such that $p|\N(a+w)$, e.g. $a=0$. Furthermore, it puts us into the position to count the number of different  prime ideals of  norm $p$ if $p$ is a prime number. This is paves the way towards a ramification theory of prime numbers over $\OD$.

\begin{thm} \label{prop:Spec}
  Let $p \in \ZZ$ be a prime number. If $p|D$ then there exists exactly one prime ideal $\mathfrak{a}$ of norm $p$
  and $\mathfrak{a}^\sigma=\mathfrak{a}$. Otherwise there exist exactly two different prime ideals $\mathfrak{a}, \mathfrak{b}$
  of norm $p$ and $\mathfrak{a}^\sigma=\mathfrak{b}$.
\end{thm}

\begin{proof}
  Let $p \in \ZZ$ be a prime number with $p|D$. Then every ideal of norm $p$ is of the form
  $\mathfrak{a}=[p;a+w]$ with $p|a(a+d)$. As $p \in \mathfrak{a}$ we may without loss of generality assume that 
  $0 \leq a \leq p-1$.  Since $p|d$ we get $p|a$ and therefore $a=0$. So there exists exactly one ideal
  of norm $p$ if $p|d$. It is then clear that $\mathfrak{a}^\sigma=\mathfrak{a}$.\\
  If $p \in \ZZ$ is a prime number with $p\nmid d$ then we may again assume that $0 \leq a \leq p-1$. So there remain
  the two possibilities $p|a$ and $p|(a+d)$. These ideals $\mathfrak{a}$ and $\mathfrak{b}$ are indeed different since $p \nmid d$ and $\mathfrak{a}^\sigma=\mathfrak{b}$.
\end{proof}

\paragraph{Ramification.} From this the ramification theory for prime numbers over $\OD$ can be deduced. In order to do this, we have to analyze the multiplication of two prime ideals. Let us first assume that $p$ is a prime number with $p|d$ and let $\mathfrak{a}=[p,w]$ be the unique  prime ideal of norm $p$. Then $\mathfrak{a}^2=[p,w][p,w] = (p)[p, w, w, d/pw]$. Hence $\mathfrak{a}^2 = (p)[p,w] \neq (p)$. In particular, $(p)$ cannot be further decomposed and the norm is not multiplicative. 
 If $p \nmid d$ let $\mathfrak{a}=[p,w]$ and $\mathfrak{b}=[p,d+w]$ be the two different ideals of norm $p$. Hence
 \begin{align*}
 [p,w][p,d+w]=[p^2,pw,pd+pw,dw+w^2]=p[p,w,d] = (p).
  \end{align*}
 Recall that an ideal is called \textbf{irreducible} if it cannot be written as the intersection of two larger ideals. Hence we have established:

\begin{thm} \label{thm:ramification_of_primes_appendix}
 Let $p \in \ZZ$ be a prime number. 
 \begin{itemize}
  \item[(i)] If $p \nmid d$ then $(p)=\mathfrak{p}\mathfrak{p}^\sigma$ for a prime ideal $\mathfrak{p}$ of norm 
	$p$ with $\mathfrak{p} \neq \mathfrak{p}^\sigma$, i.e. $p$ splits.
  \item[(ii)] If $p|d$ then $(p)$ is an irreducible ideal which is not prime. 
 \end{itemize} 
\end{thm}

\begin{cor} \label{cor:prime_decomposition_appendix}
	Every ideal $\mathfrak{a} \subset \OD$ with $\gcd(\N(\mathfrak{a}),d)$ can be uniquely written as product of prime ideals.
\end{cor}
\begin{proof} We have $\mathfrak{a}\mathfrak{a}^\sigma = \N(\mathfrak{a})$. From Theorem \ref{thm:ramification_of_primes_appendix} it follows that $\N(\mathfrak{a})$ can be uniquely written as product of prime ideals. Thus, this is also true for $\mathfrak{a}$.	
\end{proof}

\paragraph{The special linear group.} In this paragraph, we only prove the claim of Proposition~\ref{prop_exact_sequence} with the help of the following two lemmas.

\begin{lem} Let $R,S$ be two commutative rings such that there exists a surjective homomorphism of rings $f: S \to R$.
If $\SL_2(R)$ is generated by elementary matrices the induced map $\SL_2(S) \to \SL_2(S)$ is also surjective. \end{lem}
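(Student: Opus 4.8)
The plan is to exploit the elementary structural fact that the image of a group homomorphism is a subgroup, together with the hypothesis that $\SL_2(R)$ is generated by elementary matrices. (Note that the induced map is of course $\SL_2(S) \to \SL_2(R)$, not $\SL_2(S) \to \SL_2(S)$.)

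First I would set up notation: let $f_\ast: \SL_2(S) \to \SL_2(R)$ denote the map obtained by applying $f$ to each matrix entry. Since $f$ is a ring homomorphism it commutes with the polynomial $ad-bc$, so a matrix of determinant $1$ over $S$ is sent to one of determinant $1$ over $R$; and $f_\ast$ is a group homomorphism because $f$ respects the addition and multiplication used in the formula for a product of $2\times 2$ matrices. Consequently $\Imag(f_\ast)$ is a subgroup of $\SL_2(R)$.

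Next I would check that $\Imag(f_\ast)$ contains every elementary matrix in $\SL_2(R)$. An elementary matrix has the form $\begin{pmatrix} 1 & r \\ 0 & 1 \end{pmatrix}$ or $\begin{pmatrix} 1 & 0 \\ r & 1 \end{pmatrix}$ for some $r \in R$; because $f$ is surjective we may choose $s \in S$ with $f(s)=r$, and then $f_\ast\!\begin{pmatrix} 1 & s \\ 0 & 1 \end{pmatrix} = \begin{pmatrix} 1 & r \\ 0 & 1 \end{pmatrix}$, and likewise for the lower elementary matrix. Thus $\Imag(f_\ast)$ is a subgroup of $\SL_2(R)$ that contains a generating set of $\SL_2(R)$, hence equals $\SL_2(R)$; that is, $f_\ast$ is surjective.

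There is essentially no obstacle in this argument; the only point worth spelling out is that surjectivity of $f$ as a map of sets is precisely what is needed in order to lift the parameter $r$ appearing in each elementary generator of $\SL_2(R)$.
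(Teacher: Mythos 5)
Your proof is correct and is exactly the paper's argument (the paper's entire proof is the one-line observation that any elementary matrix over $R$ lifts to an elementary matrix over $S$, which is the heart of your write-up); you have simply spelled out the routine surrounding details. Your parenthetical correction of the typo in the statement — the induced map is of course $\SL_2(S) \to \SL_2(R)$ — is also right.
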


\begin{proof} Any elementary matrix over $R$ lifts to an elementary matrix of $S$. \end{proof}

\begin{lem} If $R$ is a finite commutative ring, then $\SL_2(R)$ is generated by elementary matrices. \end{lem}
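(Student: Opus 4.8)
The plan is to reduce the statement to the case of a finite \emph{local} ring and there to argue by Gaussian elimination. First I would invoke the structure theorem for finite (hence Artinian) commutative rings: such an $R$ decomposes as a finite direct product $R \cong R_1 \times \dots \times R_k$ of finite local rings $R_i$. Since the functor $\SL_2$ commutes with finite products, $\SL_2(R) \cong \prod_i \SL_2(R_i)$, and for each $i$ and each $x \in R_i$ the matrix $\begin{pmatrix} 1 & (0,\dots,x,\dots,0) \\ 0 & 1 \end{pmatrix}$ (and its transpose) is an elementary matrix of $\SL_2(R)$; the group they generate therefore contains $\SL_2(R_i)$ embedded as the $i$-th factor as soon as $\SL_2(R_i)$ is generated by elementary matrices. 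So it suffices to treat the case where $R$ is a finite local ring with maximal ideal $\mathfrak{m}$.

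So let $R$ be finite local and $A = \begin{pmatrix} a & b \\ c & d \end{pmatrix} \in \SL_2(R)$. From $ad - bc = 1$ one gets $aR + cR = R$, and in a local ring this forces at least one of $a,c$ to be a unit. After possibly left-multiplying $A$ by $\begin{pmatrix} 1 & 1 \\ 0 & 1 \end{pmatrix}$ — which replaces $a$ by $a+c$, a unit whenever $a \in \mathfrak{m}$ — I may assume $a \in R^{\times}$. Left-multiplication by $\begin{pmatrix} 1 & 0 \\ -ca^{-1} & 1 \end{pmatrix}$ then clears the lower-left entry, and since the determinant stays $1$ the matrix becomes $\begin{pmatrix} a & b \\ 0 & a^{-1} \end{pmatrix}$; right-multiplying by $\begin{pmatrix} 1 & -a^{-1}b \\ 0 & 1 \end{pmatrix}$ turns it into $\diag(a,a^{-1})$. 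Finally the Whitehead identity $\diag(a,a^{-1}) = \begin{pmatrix} 1 & a \\ 0 & 1 \end{pmatrix}\begin{pmatrix} 1 & 0 \\ -a^{-1} & 1 \end{pmatrix}\begin{pmatrix} 1 & a \\ 0 & 1 \end{pmatrix}\begin{pmatrix} 0 & -1 \\ 1 & 0 \end{pmatrix}$ together with $\begin{pmatrix} 0 & -1 \\ 1 & 0 \end{pmatrix} = \begin{pmatrix} 1 & -1 \\ 0 & 1 \end{pmatrix}\begin{pmatrix} 1 & 0 \\ 1 & 1 \end{pmatrix}\begin{pmatrix} 1 & -1 \\ 0 & 1 \end{pmatrix}$ expresses $\diag(a,a^{-1})$ as a product of elementary matrices; reading these products back shows $A$ is a product of elementary matrices.

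I do not expect a genuine obstacle here: the only external input is the decomposition of a finite commutative ring as a product of local rings, together with the elementary observation that in a local ring a pair of elements generating the unit ideal cannot consist solely of non-units, and the rest is routine $2 \times 2$ bookkeeping. A slicker alternative that bypasses the structure theorem is to note that a finite commutative ring is semilocal, hence has stable range $1$ (so for $aR+cR=R$ one can find $t$ with $a+tc \in R^{\times}$, by choosing $t$ suitably modulo each maximal ideal via the Chinese Remainder Theorem), and then perform the same Gaussian elimination after the single left multiplication by $\begin{pmatrix} 1 & t \\ 0 & 1 \end{pmatrix}$; this is essentially the $\SL_2$ case of a theorem of Bass.
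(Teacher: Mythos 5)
Your proof is correct and follows essentially the same route as the paper: decompose the finite commutative ring as a product of finite local rings, note that $\SL_2$ commutes with this product and that elementary matrices supported on one factor generate the corresponding $\SL_2(R_i)$, and reduce to the local case. The only difference is that the paper simply cites Rosenberg for the fact that $\SL_2$ of a local ring is generated by elementary matrices, whereas you prove it directly by Gaussian elimination (using that one of $a,c$ must be a unit in a local ring) together with the Whitehead identity for $\diag(a,a^{-1})$; your computations check out.
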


\begin{proof}
  Every finite commutative ring is a direct product of local rings. Since the claim is true for local rings (see e.g. \cite[Chapter~2.2]{Ros94})
  this finishes the proof.
\end{proof}

\begin{proof}[Proof (of Proposition~\ref{prop_exact_sequence})] By definition $\Gamma^D(\mathfrak{a})$ is the kernel of the
projection $\SL_2(\OD) \to \SL_2(\OD/\mathfrak{a})$. The projection map is surjective by the preceding two lemmas.
\end{proof}

\textsc{Hochschule Ruhr West, Duisburger Str. 100, D-45479 M\"ulheim an der Ruhr}\\
\textit{E-mail address:} \texttt{christian.weiss@hs-ruhrwest.de}

\end{document}